\newcolumntype{L}[1]{>{\raggedright\arraybackslash}p{#1}}
\newcommand{\Prob}{\mathds{P}}
\newcommand{\Exp}{\mathbb{E}}
\newcommand{\Href}[1]{\hyperref[#1]{\Cref{#1}}}
\renewcommand{\href}[1]{\hyperref[#1]{\ref{#1}}}
\renewcommand{\eqref}[1]{\hyperref[#1]{(\ref{#1})}}
\newtheorem{theorem}{Theorem}
\newtheorem{lemma}[theorem]{Lemma}
\newtheorem{proposition}[theorem]{Proposition}
\newtheorem{corollary}[theorem]{Corollary}
\newtheorem{definition}[theorem]{Definition}
\newtheorem{claim}[theorem]{Claim}
\newtheorem{fact}[theorem]{Fact}
\numberwithin{theorem}{section}
\newtheorem*{rep@theorem}{\rep@title}
\newcommand{\newreptheorem}[2]{%
\newenvironment{rep#1}[1]{%
 \def\rep@title{#2 \ref{##1}}%
 \begin{rep@theorem}}%
 {\end{rep@theorem}}}
\definecolor{red}{rgb}{1,0,0}
\definecolor{Gold}{rgb}{1,0.843,0.2}
\definecolor{DarkBlue}{rgb}{0,0,0.6}
\newcommand{\gold}[1]{{\color{Gold}{#1}}}
\newcommand{\darkBlue}[1]{{\color{DarkBlue}{#1}}}
\newcommand{\mason}[1]{\darkBlue{\bf [Mason: #1]}}
\newcommand{\marcelo}[1]{\gold{\bf [Marcelo: #1]}}
\newcommand{\orarrow}{\overrightarrow}
\DeclareMathOperator{\Bin}{Bin}
\DeclareMathOperator{\NP}{NP}
\def\cA{\ensuremath{\mathcal{A}}}
\def\cC{\ensuremath{\mathcal{C}}}
\def\EE{\ensuremath{\mathbb{E}}}
\def\cE{\ensuremath{\mathcal{E}}}
\def\PP{\ensuremath{\mathbb{P}}}
\def\cF{\ensuremath{\mathcal{F}}}
\def\cM{\ensuremath{\mathcal{M}}}
\let\epsilon=\varepsilon
\pgfplotsset{compat=1.18}
\begin{document}

\onehalfspace
\footskip=28pt

\title{Covering Random Digraphs with Hamilton Cycles}

\author{Asaf Ferber }
\author{Marcelo Sales}
\author{Mason Shurman}

\thanks{The authors are supported by an Air force grant FA9550-23-1-0298.}
\thanks{In addition, A.F is also supported by NSF grant DMS-1953799, NSF Career DMS-2146406, and a Sloan's fellowship. }

\address{Department of Mathematics, University of California, 
    Irvine, CA, USA}
\email{\{asaff|mtsales|mshurman\}@uci.edu}

\begin{abstract}
A covering of a digraph $D$ by Hamilton cycles is a collection of directed Hamilton cycles (not necessarily edge-disjoint) that together cover all the edges of $D$. 
We prove that for $1/2 \geq p\geq \frac{\log^{20} n}{n}$, the random digraph $D_{n,p}$ typically admits an optimal Hamilton cycle covering. Specifically, the edges of $D_{n,p}$ can be covered by a family of $t$ Hamilton cycles, where $t$ is the maximum of the the in-degree and out-degree of the vertices in $D_{n,p}$. Notably, $t$ is the best possible bound, and our assumption on $p$ is optimal up to a polylogarithmic factor.
\end{abstract}

\maketitle

\section{Introduction}

A \emph{Hamilton cycle} in a graph is a cycle that passes through all the vertices of the graph exactly once. A graph is \emph{Hamiltonian} if it contains a Hamilton cycle. Hamiltonicity is one of the most central notions in graph theory, and has been extensively studied in the last decades (see the surveys \cite{gould14survey, kuhnosthus14}). In particular, the seemingly easy problem of deciding if a graph is Hamiltonian is known to be $\NP$-hard and is one of Karp's list of 21 $\NP$-hard problems \cite{karp72}. Thus, there is a lot of interest in finding general sufficient conditions for a graph to be Hamiltonian. For instance, a classical result of Dirac \cite{dirac52} states that every graph on $n$ vertices with minimum degree $n/2$ contains a Hamilton cycle.

Once Hamiltonicity has been established, there are many natural further questions, like the problems of covering and packing a graph by Hamilton cycles. Given graphs $H$ and $G$, an $H$-\emph{packing} of $G$ is a set of edge disjoint copies of $H$ in $G$ and an $H$-\emph{covering} of $G$ is a set of (not necessarily edge-disjoint) copies of $H$ covering all the edges of $G$. An $H$-packing is \emph{optimal} if it is of largest size, while an $H$-covering is called \emph{optimal} if it is of smallest possible size.  

The size of an optimal packing of Hamilton cycles in a graph $G$ is clearly at most~$\lfloor \delta(G)/2 \rfloor$, where $\delta(G)$ denotes the minimum degree of $G$. The problem of determining for which graphs this bound is actually tight is of great interest. The first result in this direction was by Walecki in 1890, who showed that the complete graph $K_n$ can be decomposed into Hamilton cycles when $n$ is odd, and into Hamilton cyles plus a matching when $n$ is even (see \cite{walecki08}). The result was later generalized by Csaba, K\"{u}hn, Lo, Osthus, and Treglown \cite{csaba2016proof}, who showed that one can replace $K_n$ by any regular graph with degree at least $n/2$, settling a longstanding conjecture of Nash and Williams~\cite{nash1970hamiltonian}. This is clearly optimal, since there exist $(n/2-1)$-regular graphs with no Hamilton cycles. Therefore, if we wish to consider sparser graphs we need a different condition than a minimum degree. 

A natural family of graphs to consider is the family of binomial \emph{random graphs}. Let $G_{n,p}$ be the binomial random graph on $n$ vertices with each edge chosen independently with probability $p$. We say that a graph $G:= G_{n,p}$ satisfies a property \emph{with high probability} (whp) if the probability that $G$ satisfies the property tends to $1$ as $n\rightarrow\infty$. The study of packing of Hamilton cycles in random graphs was initiated by Bollob\'{a}s and Frieze~\cite{bollofrieze83matchings} who proved that for every fixed integer $k$, whp $G_{n,p}$ contains $\lfloor k/2 \rfloor$ edge-disjoint Hamilton cycles plus a disjoint perfect matching if $k$ is odd. Recently, the problem of finding optimal Hamilton packings and covering of random graphs has received a great amount of attention, resulting in its complete solution in a series of papers by several authors \cite{bollofrieze83matchings, knox2015edge, krivelevich2012optimal, kuhn2014hamilton}. More precisely, they showed that for any $p$, in $G:= G_{n,p}$ whp there exists a packing of $G$ with $\lfloor \delta(G)/2 \rfloor$ Hamilton cycles, therefore confirming a conjecture of Frieze and Krivelevich \cite{friezekrivelevich08}.
 
The problem of covering the edges of a random graph with Hamilton cycles was first investigated by Glebov, Krivelevich and Szab\'{o} \cite{glebov2014covering}. Note that an optimal Hamilton covering has size at least $\lceil \Delta(G)/2 \rceil$. The authors showed that for $p\geq n^{-1+\epsilon}$ this bound is approximately tight. More precisely, they showed that in this range the edges of $G:= G_{n,p}$ whp can be covered by $(1+o(1))\Delta(G)/2$ Hamilton cycles and conjectured that $\lceil \Delta(G)/2\rceil$ should be the correct size. 
In 2014, Hefetz, K\"{u}hn, Laplinskas and Osthus confirmed this conjecture for $1-n^{1/8} \geq p \geq \log^{117}n/n$ \cite{hefetz2014optimal}, and recently in an impressive work by Dragani\'{c}, Glock, Correia, and Sudakov, the problem was almost entirely solved. They confirmed the conjecture for $1-n^{1/8} \geq p \geq C \log n/n$ for sufficiently large $C$. Up to the constant $C$, this is optimal, since for lower $p$, $G_{n,p}$ is disconnected. 


Our main result concerns an extension of the previous problem to the directed setting. Recall that a \emph{directed graph} (or a \emph{digraph} for short) is a pair $D=(V,E)$ with a set of vertices $V$ and a set of ordered pairs $E\subseteq V^{(2)}$ called \emph{directed edges} of $D$. As a matter of convenience, we shall denote an edge $(x,y)\in E$ by $\orarrow{xy} \in E$. A \emph{Hamilton cycle} in a digraph is a cycle passing through all the vertices exactly once, where all the edges are oriented in the same direction in a cyclic order. Given a digraph $D$, we denote by $\delta^{+}(D)$ and $\delta^{-}(D)$ the minimum out-degree and in-degree, respectively. Similarly, one can define $\Delta^{+}(D)$ and $\Delta^{-}(D)$ the maximum out-degree and in-degree, respectively. Lastly, it would be convenient to define $\delta(D):=\min\{\delta^+(D),\delta^{-}(D)\}$ and $\Delta(D):=\max\{\Delta^+(D),\Delta^-(D)\}.$

Since the above mentioned results regarding the covering problem heavily rely on the so-called P\'{o}sa rotation-extension technique, and since this technique is not applicable in the directed settings, the study of Hamiltonicity problems for directed graphs is considered more difficult. Let $D_{n,p}$ be the binomial random digraph on $n$ vertices, where each directed edge is chosen independently with probability $p$. It is known that a typical $D:= D_{n,p}$ is Hamiltonian assuming $p\geq \frac{\log n+\omega(1)}{n}$ (see \cite{frieze88,mcdiarmid80}). Therefore, it is reasonable to ask about the size of optimal packing and coverings of the random directed graph by Hamilton cycles. A simple argument shows that every Hamilton packing of a digraph $D$ has size at most $\delta(D)$. Moreover, every Hamilton covering of a digraph $D$ has size at least $\Delta(D)$. It was shown in \cite{ferber2017packing} that for $p\geq \log^C n/n$, the digraph $D:= D_{n,p}$ whp has a packing of size $(1-o(1))pn$ of edge-disjoint Hamilton cycles. Moreover, it was also shown that for $p\geq \log^C n/n$, the digraph $D:= D_{n,p}$ whp has a covering of size $(1+o(1))pn$ by Hamilton cycles. Both results are asymptotic optimal, since in a random digraph the in-degree and out-degree concentrates around $p(n-1)$. 

In our main result, we improve upon these bounds and show that for $p \geq \log^{C} n/n$, the trivial bound is indeed optimal for the covering problem in random digraphs.

\begin{theorem}\label{thm:main}
Let $1/2 \geq p\geq\frac{ \log^{20} n}{n}$. Then a digraph $D:=D_{n,p}$ whp can be covered by $\Delta(D)$ directed Hamilton cycles.
\end{theorem}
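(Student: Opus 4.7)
The plan is to decompose $D := D_{n,p}$ into a dense part $D_0$ and a sparse reserve $R$ via sprinkling, build $t := \Delta(D)$ Hamilton cycles covering nearly all edges, and finish by absorbing the remaining edges using $R$. Choose $q = \log^{10}n/n$ and $p_0$ so that $1-p = (1-p_0)(1-q)$; then $D$ is distributed as $D_0 \cup R$ with $D_0 \sim D_{n,p_0}$ and $R \sim D_{n,q}$ independent. Standard concentration yields $t = pn + O(\sqrt{pn\log n})$ whp, and the set of \emph{heavy} vertices (those $v$ with $d^+(v) = t$ or $d^-(v) = t$) has size $o(n)$.

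The heart of the argument is to construct in $D_0 \cup R$ a family $H_1,\dots,H_t$ of Hamilton cycles covering all but an exceptional edge set $E^*$ of size $o(n)$, while respecting the tight constraint at heavy vertices: at a vertex $v$ of maximum in-degree, the $t$ cycles must use $t$ distinct in-edges at $v$ (otherwise some in-edge at $v$ is missed), and analogously for out-edges. I would achieve this by strengthening the packing result of \cite{ferber2017packing}: first build $t$ edge-disjoint spanning sub-digraphs (e.g.\ $2$-factors) that exhaust all edges incident to heavy vertices, and then merge the cycles of each $2$-factor into a single Hamilton cycle by short connector paths drawn from $R$. This produces the $t$ cycles while leaving only a tiny exceptional set $E^*$ of edges entirely at light vertices.

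In the final step, each leftover edge $\overrightarrow{xy} \in E^*$ is spliced into some $H_i$: locate an arc of $H_i$ near $\{x,y\}$ and replace it by a short detour through $R$ that traverses $\overrightarrow{xy}$. The main obstacle is carrying out all such splicings simultaneously without disturbing the rigid edge assignment at heavy vertices; this is the directed analogue of the rotation-extension trick, which is unavailable here. To overcome it, I would pre-plant many \emph{absorber gadgets} in $R$, vertex-disjoint from the (few) heavy vertices, and show via a Hall-type matching argument on an auxiliary bipartite graph between $E^*$ and absorber sites that every exceptional set that can arise is absorbable. The matching condition reduces to an expansion property of the sparse random reserve $R$, which is guaranteed by $q \gg 1/n$. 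The delicate bookkeeping that ensures heavy-vertex in-/out-edges remain "locked" throughout this process is the technical heart of the proof.
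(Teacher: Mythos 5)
There is a genuine gap, and it appears already in the second step. You propose to build $t=\Delta(D)$ \emph{edge-disjoint} spanning sub-digraphs (2-factors) that exhaust all edges at the heavy vertices. This is impossible: every spanning $1$-regular sub-digraph uses exactly one out-edge and one in-edge at \emph{every} vertex, so $t$ edge-disjoint such factors force $\delta^{\pm}(D)\geq t=\Delta(D)$, whereas whp $\Delta(D)-\delta(D)=\Theta(\sqrt{np\log n})>0$. Even if you drop edge-disjointness, the deeper problem is that your budget has zero slack: at the maximum-degree vertex each of the $\Delta(D)$ cycles must use a distinct edge, and every edge of $D$ — including every edge of your reserve $R\sim D_{n,q}$, which has $\Theta(n\log^{10}n)$ edges — must be covered by these same $\Delta(D)$ cycles. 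Pre-planted absorber gadgets that end up unused are themselves uncovered edges of $D$, and merging the cycles of a factor into a Hamilton cycle necessarily deletes one edge per cycle, which then also needs covering; your claim that the exceptional set $E^{*}$ has size $o(n)$ does not account for either of these. Finally, the splicing step is not a local repair: rerouting $H_i$ through a leftover edge $\overrightarrow{xy}$ removes arcs of $H_i$ that may be covered only by $H_i$, so absorption of old exceptional edges creates new ones, and there is no spare cycle left to catch them.

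The paper resolves exactly this zero-slack difficulty differently: whp the maximum-degree vertex $x_\star$ is unique with a gap $\Delta_1(D)-\Delta_2(D)\geq \sqrt{np}/(2\log n)$, so one can reserve $\Delta_1-t$ out-edges at $x_\star$ (with $t\approx\Delta_2$) as anchors for \emph{extra} cycles. Only $t$ ``main'' cycles are built, via a covering of the associated bipartite graph $B_{n,p}$ by perfect matchings (regular subgraphs plus an $f$-factor, through the permutation model $D_\pi(B)$), deliberately leaving a sparse remainder $R$ with $\Delta(R)=o(\sqrt{np}/\log n)=o(\Delta_1-t)$; each main cycle is completed using a pseudorandom ``contracted'' auxiliary digraph, never by modifying an already-built cycle. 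Then $R$ is split by Vizing's theorem into few matchings, and each matching together with a batch of reserved out-edges of $x_\star$ is covered by Hamilton cycles, one per reserved edge, giving exactly $t+(\Delta_1-t)=\Delta(D)$ cycles. If you want to salvage your sprinkling/absorption framework, you would first need a mechanism that creates comparable slack (extra cycles indexed by surplus edges at the unique maximum-degree vertex) and guarantees that every planted but unused reserve edge is still covered; as written, the proposal does not close.
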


The paper is organized as follows: In Section \ref{sec:preliminaries}, we give a brief overview of the problem and introduce the standard probabilistic and graph theoretic tools used throughout the paper. Section \ref{sec:CoveringBnp} is devoted to study the problem of finding coverings of the random bipartite graph by perfect matchings. We will use those coverings in Section \ref{sec:coverforest} to show that the random digraph can be almost covered by linear forests. In Section \ref{sec:coverhamilton}, we discuss how to cover linear forests with Hamilton cycles. Finally, a proof of Theorem \ref{thm:main} is given in Section \ref{sec:main}.

\section{Overview and preliminaries}\label{sec:preliminaries}

\subsection{Proof overview}

In this subsection, we will describe our proof strategy to obtain a covering of $D:=D_{n,p}$ with $\Delta(D)$ Hamilton cycles. The proof is split into three parts. Given $D$, let $x_\star$ be the vertex of maximum degree in $D$ and suppose without loss of generality that the maximum degree is an out-degree. In the first part of our proof, we are going to reserve a set of $deg^+(x_\star)-t$ out-edges from $x_\star$, where $t$ is chosen such that $deg^+(x_\star)-t$ is roughly $\sqrt{pn}$. We call this set $E$. We will then proceed to prove that the edges of $D\setminus E$ can be covered by a family of linear forests $\cF=\{F_1,\ldots,F_t\}$ and by a sparse graph $R_1$ with $\Delta(R_1)=o\left(|E|\right)$. In the second part of the proof, we will show that we can cover the edges of the family $\cF$ by $t$ Hamilton cycles $\cC=\{C_1,\ldots,C_t\}$ and another sparse graph $R_2$ with $\Delta(R_2)=o\left(|E|\right)$. Finally, in the last part we will show that one can cover $R:=R_1\cup R_2$ and $E$ by using $|E|$ Hamilton cycles, each of which contains exactly one edge from $E$. This will give us a covering of $D$ by $(\deg^+(x_\star)-t)+t=\Delta(D)$ Hamilton cycles, as desired.

The main idea for the first part is to consider an alternative random model for $D_{n,p}$ using the random bipartite graph model, referred to as $B_{n,p}$, and a random permutation (see \Cref{def:model}). This reduces the problem of finding linear forests in $D_{n,p}$ to finding almost perfect matchings in $B_{n,p}$. In Section \ref{sec:CoveringBnp}, we discuss how to obtain a covering of $B_{n,p}$ by perfect matchings, while in Section \ref{sec:coverforest} we construct our linear forests from such matchings. The main tool for the second and third part of the proof is a result from \cite{ferber2017robust} which states that certain pseudorandom digraphs are Hamiltonian. In Section \ref{sec:coverhamilton}, we will show how each of the linear forests we obtained can be extended into a Hamilton cycle by constructing an auxiliary pseudorandom digraph where we ``contract'' the paths of our linear forest into single vertices (see \Cref{def:auxgraph}).

This paper investigates asymptotic results, so every calculation we show regarding graphs with $n$ vertices will be under the assumption that $n$ is sufficiently large.
We now introduce some preliminary results for our proof.

\subsection{Chernoff's bounds}

We extensively use the following well-known Chernoff bounds for the upper and lower tails of binomial random variables (see \cite{JRLrandomgraphs}).
\begin{theorem}[Chernoff Bound]\label{chernoff}
Suppose $X_1, \ldots X_n$ are independent $0/1$-random variables. Let $X=\sum_{i=1}^n X_i$ and let $\mathbb{E}(X)= \mu$. Then, for $\delta \in (0,1)$, we have
\begin{align*}
    \Prob(|X-\mu| > \delta\mu)\leq 2e^{-\delta^2\mu/3}
\end{align*}
\end{theorem}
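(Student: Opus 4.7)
The plan is to prove this by the standard moment-generating-function (exponential Markov) argument, handling the upper and lower tails separately and then combining them with a union bound.

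First, for the upper tail, I would start from Markov's inequality applied to $e^{tX}$ for a parameter $t>0$ to be optimized: for any $t>0$,
\[
\Prob(X \geq (1+\delta)\mu) = \Prob(e^{tX} \geq e^{t(1+\delta)\mu}) \leq e^{-t(1+\delta)\mu}\,\Exp[e^{tX}].
\]
By independence, $\Exp[e^{tX}] = \prod_{i=1}^n \Exp[e^{tX_i}]$. Writing $p_i = \Exp[X_i]$ and using that each $X_i \in \{0,1\}$, one has $\Exp[e^{tX_i}] = 1 + p_i(e^t - 1) \leq \exp(p_i(e^t - 1))$ via the inequality $1+x \leq e^x$. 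Multiplying these bounds yields $\Exp[e^{tX}] \leq \exp(\mu(e^t - 1))$. Plugging in $t = \ln(1+\delta)$ (the optimal choice) then gives
\[
\Prob(X \geq (1+\delta)\mu) \leq \exp\bigl(-\mu\bigl((1+\delta)\ln(1+\delta) - \delta\bigr)\bigr).
\]

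Next, for the lower tail, I would run the analogous argument with a parameter $t<0$: namely
\[
\Prob(X \leq (1-\delta)\mu) \leq e^{t(1-\delta)\mu}\,\Exp[e^{tX}] \leq \exp\bigl(-\mu\bigl((1-\delta)\ln(1-\delta) + \delta\bigr)\bigr),
\]
optimizing at $t = \ln(1-\delta)$.

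The main (purely analytic) step is then the calculus inequality: for all $\delta \in (0,1)$,
\[
(1+\delta)\ln(1+\delta) - \delta \geq \frac{\delta^2}{3} \qquad\text{and}\qquad (1-\delta)\ln(1-\delta) + \delta \geq \frac{\delta^2}{3}.
\]
Both can be verified by expanding the logarithm as a Taylor series and bounding the tail terms, or equivalently by showing that the functions $f(\delta) = (1\pm\delta)\ln(1\pm\delta) \mp \delta - \delta^2/3$ are nonnegative on $(0,1)$ by differentiating and checking values at $\delta=0$. This is the only place where the specific constant $1/3$ enters, and it is the step I expect to need the most care, though it is standard.

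Combining the two tail bounds with a union bound gives
\[
\Prob(|X - \mu| > \delta\mu) \leq \Prob(X \geq (1+\delta)\mu) + \Prob(X \leq (1-\delta)\mu) \leq 2e^{-\delta^2\mu/3},
\]
which is exactly the stated inequality.
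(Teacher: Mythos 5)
Your proof is correct: the exponential-Markov (moment generating function) argument with the optimizations $t=\ln(1\pm\delta)$ and the calculus inequalities $(1\pm\delta)\ln(1\pm\delta)\mp\delta\ge\delta^2/3$ on $(0,1)$ is exactly the standard derivation, and it correctly handles non-identically distributed $X_i$ via $\Exp[e^{tX_i}]\le\exp(p_i(e^t-1))$. The paper does not prove this statement but simply cites it from the standard reference, where essentially this same argument is given, so there is nothing further to compare.
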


For larger deviations, we state another useful version that can be also found in \cite{JRLrandomgraphs}.

\begin{theorem}[\cite{JRLrandomgraphs}, Corollary 2.4]\label{chernoff2}
Suppose $X_1, \ldots X_n$ are independent $0/1$-random variables. Let $X=\sum_{i=1}^n X_i$ and let $a\geq 7\mathbb{E}(X)$ be a real number. Then,
\begin{align*}
\PP(X>a)\leq e^{-a}.
\end{align*}
\end{theorem}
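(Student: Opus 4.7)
The plan is to use the standard exponential-moment (Chernoff) method. Write $\mu := \mathbb{E}[X]$ and $p_i := \PP(X_i = 1)$, and for a parameter $\lambda > 0$ to be chosen, apply Markov's inequality to the nonnegative random variable $e^{\lambda X}$ to obtain
$$\PP(X > a) \;\leq\; e^{-\lambda a}\,\mathbb{E}\bigl[e^{\lambda X}\bigr].$$
By independence of the $X_i$ and the elementary inequality $1 + x \leq e^x$, the moment generating function factors and is controlled as
$$\mathbb{E}\bigl[e^{\lambda X}\bigr] \;=\; \prod_{i=1}^n\bigl(1 + p_i(e^\lambda - 1)\bigr) \;\leq\; \prod_{i=1}^n e^{p_i(e^\lambda - 1)} \;=\; e^{\mu(e^\lambda - 1)}.$$
Combining the two displays yields the master bound $\PP(X > a) \leq \exp\bigl(\mu(e^\lambda - 1) - \lambda a\bigr)$.

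Next I would optimize over $\lambda$. The choice $\lambda := \log(a/\mu)$ is admissible (positive, since $a \geq 7\mu > \mu$), and with this choice $e^\lambda = a/\mu$, so
$$\PP(X > a) \;\leq\; \exp\bigl(a - \mu - a\log(a/\mu)\bigr) \;=\; \Bigl(\frac{e\mu}{a}\Bigr)^{a} e^{-\mu}.$$
It then remains to check that the right-hand side is at most $e^{-a}$ under the hypothesis $a \geq 7\mu$.

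Taking logarithms and writing $t := a/\mu \geq 7$, so that $a = t\mu$, the required inequality rearranges to $t\log t \geq 2t - 1$, i.e.\ $\log t \geq 2 - \tfrac{1}{t}$. At $t = 7$ one has $\log 7 \approx 1.9459$ and $2 - 1/7 \approx 1.8571$, so the inequality holds. Moreover, the function $g(t) := \log t - 2 + 1/t$ has derivative $g'(t) = \tfrac{t-1}{t^2} > 0$ for $t > 1$, hence $g$ is strictly increasing on $[7,\infty)$ and the inequality is preserved for every $t \geq 7$. This delivers $\PP(X > a) \leq e^{-a}$, as desired.

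The only delicate point — if one can call it that — is that the constant $7$ in the hypothesis is essentially forced by this argument: the unique $t_\star$ solving $\log t = 2 - 1/t$ lies slightly below $7$, which is why the statement is phrased with $a \geq 7\,\mathbb{E}(X)$. Everything else is routine: the MGF factorization, the choice $\lambda = \log(a/\mu)$, and the elementary calculus check. One could equivalently derive the bound by invoking the classical form $\PP(X > (1+\delta)\mu) \leq \bigl(e^\delta/(1+\delta)^{1+\delta}\bigr)^\mu$ and specializing to $\delta = t - 1 \geq 6$, which is in fact the path \cite{JRLrandomgraphs} takes.
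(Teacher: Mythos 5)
Your proof is correct: the exponential-moment bound, the choice $\lambda=\log(a/\mu)$, and the monotonicity check that $\log t\geq 2-1/t$ for $t\geq 7$ together give $\PP(X>a)\leq e^{-a}$ (with the trivial caveat that the degenerate case $\mu=0$, where $X=0$ almost surely, should be dispatched separately since $\lambda$ is then undefined). The paper itself offers no proof — it quotes the statement from \cite{JRLrandomgraphs}, Corollary 2.4 — and your argument is essentially the standard derivation in that reference (equivalently, specializing $\PP(X\geq(1+\delta)\mu)\leq\bigl(e^{\delta}/(1+\delta)^{1+\delta}\bigr)^{\mu}$ to $\delta\geq 6$), so there is nothing further to compare.
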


\subsection{Graph theoretical results}


Here, we compile a list of graph theoretical results related to matchings. Given a multigraph $G$, a \emph{proper edge coloring} is a coloring of the edges of $G$ such that no adjacent edge has the same color. In other words, a proper coloring is a partition of $G$ such that each color class is a matching. A multigraph $G$ is said to have multiplicity $\mu:=\mu(G)$ if $\mu$ is the largest integer such that there exists two vertices of $G$ with $\mu$ edges connecting them. The next result is a generalization of the celebrated Vizing's theorem.

\begin{theorem}[\cite{bergefourniervizing}]\label{thm:vizing}
Let $G$ be an undirected multigraph without self-loops. Then $G$ can be properly edge colored with at most $\Delta(G)+\mu(G)$ colors.
\end{theorem}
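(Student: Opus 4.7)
My plan is to prove this by induction on $|E(G)|$, adapting Vizing's classical fan-and-Kempe-chain argument to the multigraph setting. The base case of an edgeless graph is trivial. For the inductive step, pick any edge $e = uv_1$ of $G$, delete it, and apply the induction hypothesis to obtain a proper edge coloring $c$ of $G - e$ using at most $\Delta(G) + \mu(G)$ colors (removing an edge cannot increase $\Delta$ or $\mu$). It then suffices to extend $c$ to color $e$. Call a color $\alpha$ \emph{free at} $w$ if no edge at $w$ in $G - e$ has color $\alpha$; since $\deg_{G-e}(w) \le \Delta(G)$, at least $\mu(G)$ colors are free at each vertex. If some color is free at both $u$ and $v_1$, simply color $e$ with it.

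Otherwise, I would build a \emph{Vizing fan} at $u$: a maximal sequence of distinct neighbors $v_1, v_2, \ldots$ of $u$ and colors $\alpha_1, \alpha_2, \ldots$ such that each $\alpha_i$ is free at $v_i$ but not at $u$, with $\alpha_i$ coloring some edge from $u$ to $v_{i+1}$. The construction terminates in one of two ways: either (i) some $\alpha_i$ is free at $u$, or (ii) the color $\alpha_i$ just selected coincides with an earlier $\alpha_j$. In case (i), rotate colors along the fan by giving $e$ the color $\alpha_1$ and reassigning each edge $uv_{j+1}$ (formerly $\alpha_j$) the color $\alpha_{j+1}$ for $j < i$; this remains proper because each $\alpha_j$ was chosen to be free at $v_j$. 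In case (ii), pick a color $\beta$ that is free at $u$ — available since $\mu(G) \ge 1$ — and swap along the maximal $\alpha_j\beta$-alternating path beginning at the relevant fan vertex. The swapped coloring puts us back in case (i), and one final rotation extends the coloring to $e$.

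The main obstacle is the multigraph aspect: parallel edges between $u$ and some $v_i$ can force the fan to repeat a vertex far sooner than in the simple-graph case, and one must verify that neither the rotation in case (i) nor the Kempe swap in case (ii) accidentally produces a monochromatic pair of parallel edges. The multiplicity bound $\mu(G)$ enters exactly here: it guarantees a pool of at least $\mu(G)$ colors free at $u$, which is the precise budget needed to choose the Kempe-swap color $\beta$ avoiding all colors appearing on parallel edges from $u$ to the fan's vertices, and analogously to keep the rotation step legal. This bookkeeping is essentially the Berge--Fournier refinement of Vizing's original argument; once the correct choice of $\beta$ is made, the standard alternating-path analysis carries over verbatim.
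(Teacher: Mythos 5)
The paper does not prove this statement at all --- it is imported verbatim from the cited reference --- so the only question is whether your sketch stands on its own, and it has a genuine gap at exactly the point where the multigraph hypothesis has to do work. Your fan is a sequence of \emph{distinct} neighbours $v_1,v_2,\ldots$ of $u$ with $\alpha_i$ free at $v_i$ and present at $u$, and you assert the construction can only terminate because (i) some $\alpha_i$ is free at $u$, or (ii) $\alpha_i$ equals an earlier $\alpha_j$. In a multigraph there is a third way to get stuck: the unique edge at $u$ coloured $\alpha_i$ may be a \emph{parallel} edge leading back to a vertex $v_j$ already on the fan, while $\alpha_i$ differs from every earlier $\alpha_{j'}$ (the fan edge into $v_j$ is only one of up to $\mu$ parallel $uv_j$-edges, each carrying its own colour). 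Already at $i=2$ this can happen: $\alpha_2$ may sit on a second $uv_1$-edge, and then $\alpha_2\neq\alpha_1$ because $\alpha_1$ is free at $v_1$ while $\alpha_2$ is present there. In this situation neither your rotation (i) nor your Kempe setup (ii) applies, and the sketch gives no instruction.

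More tellingly, the argument never uses $\mu$ quantitatively. The stated role of $\mu$ --- a pool of $\mu$ colours free at $u$ from which to pick $\beta$ ``avoiding all colours appearing on parallel edges from $u$ to the fan's vertices'' --- is vacuous: every edge from $u$ is incident to $u$, so any colour free at $u$ automatically differs from all of them, and likewise the rotation step needs no extra budget. So if your dichotomy and the ``carries over verbatim'' claim were correct, the identical words would prove $\chi'(G)\le\Delta(G)+1$ for multigraphs, which is false: three vertices pairwise joined by $\mu$ parallel edges have $\Delta=2\mu$ and $\chi'=3\mu=\Delta+\mu$. The real content of the Berge--Fournier/Vizing argument is precisely the bookkeeping you defer: one works with a fan of \emph{edges} (so a vertex may occur up to $\mu$ times), and in the stuck configuration one runs a counting argument --- each distinct fan vertex misses at least $k-\Delta$ colours, these missing sets are forced to be pairwise disjoint and to appear on fan edges, while each vertex accounts for at most $\mu$ fan edges --- which is exactly what makes $k=\Delta+\mu$ colours suffice. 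Without that (or an equivalent device), the proposal is a correct proof only of the simple-graph case, not of the stated theorem.
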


The next series of results are about finding matching factors in bipartite graphs. We start with Hall's famous marriage theorem (see e.g. Theorem 3.1.11 in \cite{west}). 

\begin{theorem}[Hall's Theorem]\label{thm:halls}
    Let $B$ be a bipartite graph with parts $X$ and $Y$ of the same size. There is a perfect matching in $B$ if and only if for every subset $W\subseteq X$, we have that $|N(W)|\geq |W|$. 
\end{theorem}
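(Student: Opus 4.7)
The plan is to prove this by induction on $n = |X|$. The forward direction is immediate: a perfect matching gives an injection from $X$ into $Y$, so the matched partners of any $W \subseteq X$ form $|W|$ distinct vertices in $N(W)$. For the converse, the base case $n = 1$ is trivial since Hall's condition guarantees at least one neighbor of the unique vertex. For the inductive step, I split into two cases depending on whether the Hall condition is ever tight on a proper nonempty subset.

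In the first case, suppose $|N(W)| \geq |W| + 1$ for every nonempty proper subset $W \subsetneq X$. Pick any $x \in X$ and any neighbor $y$ of $x$, and consider $B' := B - x - y$. For any $W \subseteq X \setminus \{x\}$ we have $|N_{B'}(W)| \geq |N_B(W)| - 1 \geq |W|$, where the last inequality uses the strict form of Hall's condition for nonempty $W$ and is trivial for $W = \emptyset$. By the inductive hypothesis, $B'$ has a perfect matching, to which we append the edge $xy$ to obtain a perfect matching of $B$.

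In the second case, there exists a nonempty proper subset $W \subsetneq X$ with $|N(W)| = |W|$. The bipartite subgraph of $B$ induced on $W \cup N(W)$ still satisfies Hall's condition (since for $W'' \subseteq W$ one has $N_B(W'') \subseteq N(W)$) and has smaller size, so by induction it admits a perfect matching $M_1$. Let $B'$ be the bipartite graph on $(X \setminus W) \cup (Y \setminus N(W))$. For any $W' \subseteq X \setminus W$, applying Hall's condition in $B$ to the set $W \cup W'$ gives $|N_B(W \cup W')| \geq |W| + |W'|$, and since $N_B(W \cup W') \subseteq N(W) \cup N_{B'}(W')$ we deduce $|N_{B'}(W')| \geq |W'|$. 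By induction $B'$ has a perfect matching $M_2$, and $M_1 \cup M_2$ is the desired perfect matching of $B$.

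The only real obstacle is correctly setting up this case distinction and verifying that Hall's condition descends to both smaller subgraphs in the ``deficient'' case; once that bookkeeping is done, the argument closes cleanly. An alternative route via augmenting paths or max-flow/min-cut would work equally well, but the inductive argument is the most self-contained for the statement as phrased.
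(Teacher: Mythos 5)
Your proof is correct: it is the classical inductive argument (often attributed to Halmos--Vaughan), with the forward direction immediate and the converse split into the case where Hall's condition is strict on every nonempty proper subset and the case where some tight set exists; both reductions to smaller balanced bipartite graphs are verified properly, including the key containment $N_B(W\cup W')\subseteq N(W)\cup N_{B'}(W')$ in the tight case. Note that the paper does not prove this statement at all --- it is quoted as a classical result with a reference to West's textbook --- so there is no internal proof to compare against; your self-contained argument is a perfectly standard and complete substitute.
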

It follows from \Cref{thm:halls} that any regular bipartite graph contains a perfect matching 
, and when we remove a perfect matching from a regular graph, the resulting graph is also regular. Thus, by repeatedly applying \Cref{thm:halls} to find perfect matchings, we obtain:
\begin{corollary}
    \label{thm:hallscor}
    The edges of a $d$-regular bipartite graph can be decomposed into $d$ edge-disjoint perfect matchings.
\end{corollary}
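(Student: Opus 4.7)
The plan is to peel off perfect matchings one at a time using Hall's theorem (\Cref{thm:halls}) and proceed by induction on $d$. The base case $d=0$ is the empty graph, which is trivially the union of zero matchings. For the inductive step, given a $d$-regular bipartite graph $G$ with parts $X$ and $Y$, I would first observe that $|X|=|Y|$ by double counting the edges: $d|X|=e(G)=d|Y|$. This makes Hall's theorem applicable in principle.

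The key step is verifying Hall's marriage condition in $G$. For any $W\subseteq X$, every edge incident to $W$ lands in $N(W)$, so counting edges in two ways gives $d|W|=e(W,N(W))\leq d|N(W)|$, where the inequality uses that each vertex of $N(W)$ has degree exactly $d$. Dividing by $d$ yields $|W|\leq |N(W)|$, and \Cref{thm:halls} then produces a perfect matching $M\subseteq E(G)$.

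To close the induction, note that deleting $M$ removes exactly one edge at each vertex of $G$, so $G\setminus M$ is a $(d-1)$-regular bipartite graph. By the inductive hypothesis, $G\setminus M$ decomposes into $d-1$ edge-disjoint perfect matchings, and adjoining $M$ yields the desired decomposition of $G$ into $d$ edge-disjoint perfect matchings. I do not foresee any real obstacle: the argument is entirely classical, and the only ingredients beyond Hall's theorem are the equality $|X|=|Y|$ and the preservation of regularity under removal of a perfect matching.
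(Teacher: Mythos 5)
Your proof is correct and follows the same route as the paper: repeatedly extract a perfect matching via Hall's theorem (\Cref{thm:halls}) and use the fact that removing a perfect matching from a regular bipartite graph leaves a regular graph. The only difference is that you spell out the verification of Hall's condition by double counting, which the paper leaves implicit.
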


We will also need the following lemma due to Gale and Ryser (for instance, see \cite{lovasz2007combinatorial}).

\begin{lemma}\label{lem:gale-ryser}
    Let $B$ be a bipartite graphs with parts $X$ and $Y$ of size $n$. Then $B$ contains an $r$-regular subgraph if and only if for all $S\subseteq X$, $T \subseteq Y$, $$e(S,T)>r(|S|+|T|-n)$$
\end{lemma}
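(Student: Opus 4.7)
The plan is to prove both directions of the biconditional, with the forward direction handled by a short counting argument and the harder direction reduced to a standard max-flow min-cut computation. For the ``only if'' direction, suppose $H \subseteq B$ is $r$-regular and fix $S \subseteq X$, $T \subseteq Y$. The $r|S|$ edges of $H$ incident to $S$ can send at most $r(n - |T|)$ of their other endpoints into $Y \setminus T$ (since each vertex of $Y$ absorbs at most $r$ of them), so at least $r|S| - r(n-|T|) = r(|S|+|T|-n)$ must land in $T$. This yields $e_B(S,T) \geq e_H(S,T) \geq r(|S|+|T|-n)$, disposing of this direction in a few lines.

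For the ``if'' direction, I would set up the standard flow network. Introduce a source $s$ and sink $t$, add arcs $s\to x$ of capacity $r$ for each $x \in X$, arcs $y \to t$ of capacity $r$ for each $y \in Y$, and a capacity-$1$ arc from $x$ to $y$ for every edge $xy \in E(B)$. An integer $s$-$t$ flow of value $rn$ must saturate every arc leaving $s$ and entering $t$, and the unit-capacity arcs carrying flow $1$ then form an $r$-regular subgraph of $B$. By integrality of max-flow on integer networks and the max-flow min-cut theorem, it suffices to show that every $s$-$t$ cut has capacity at least $rn$.

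To check the cut bound, take any cut $(A,\bar A)$ with $s \in A$, $t \in \bar A$, and set $S = X \setminus A$ and $T = Y \cap A$. The cut capacity decomposes as $r|S| + r|T| + e_B(X \setminus S, Y \setminus T)$, since the arcs from $s$ to $S$, the arcs from $T$ to $t$, and the $B$-arcs from $X \setminus S$ to $Y \setminus T$ are precisely the arcs crossing the cut. Applying the hypothesis to the pair $(X \setminus S, Y \setminus T)$ (or noting the bound is vacuous when $(n - |S|) + (n - |T|) \leq n$) gives $e_B(X \setminus S, Y \setminus T) \geq r(n - |S| - |T|)$, so the cut capacity is at least $rn$, completing the reduction.

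The one mildly delicate point is the strict-versus-weak inequality in the statement: the condition is written as $e(S,T) > r(|S|+|T|-n)$, but since both sides are integers, this is equivalent to $e(S,T) \geq r(|S|+|T|-n) + 1$, and the extra additive slack does not harm the cut-capacity calculation. I do not expect any substantive obstacle beyond this bookkeeping; the reduction itself is entirely standard, and since both Hall's theorem and max-flow min-cut are classical, the lemma is really being quoted as a black box from \cite{lovasz2007combinatorial} rather than proved from scratch in the paper.
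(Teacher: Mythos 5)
Your argument is correct in substance, but note first that the paper does not prove this lemma at all: it is quoted as a classical result (Gale--Ryser, cited to \cite{lovasz2007combinatorial}), and inside the paper it is in any case the special case $f\equiv r$ of Ore's theorem (\Cref{orestheorem}), which is likewise quoted without proof. So your max-flow/min-cut derivation is a legitimate self-contained alternative rather than a different route from an existing proof; the network construction, the integrality argument, and the cut computation (cut capacity $r|S|+r|T|+e_B(X\setminus S,Y\setminus T)\geq rn$ via the hypothesis applied to $(X\setminus S,Y\setminus T)$) are all correct, and an even shorter option would have been to specialize \Cref{orestheorem}.

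The one genuine issue is your treatment of the strict inequality. Your counting argument for necessity yields only $e(S,T)\geq e_H(S,T)\geq r(|S|+|T|-n)$, and the strict version of necessity stated in the lemma is in fact false: if $B$ itself is $r$-regular it certainly contains an $r$-regular subgraph, yet $S=X$, $T=Y$ gives $e(S,T)=rn=r(|S|+|T|-n)$ (and $S=\emptyset$, $T=Y$ gives $0>0$, which fails in every graph). So the ``$>$'' in the paper's statement is a slip, and the correct equivalence has ``$\geq$''. Your closing remark that the strict condition equals the weak one plus an integer slack only rescues the sufficiency direction (where extra slack is harmless); it does not make the stated ``only if'' provable. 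This does not affect the paper, which invokes only sufficiency (in the proof of \Cref{thm:matchingcovering1} the strict bound is verified and the regular subgraph is extracted), but your write-up should state explicitly that you are proving the $\geq$ version of the equivalence and that the strict form of the necessity direction cannot hold.
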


 Let $G=(V,E)$ be any graph and let $f: V\rightarrow \mathbb{N}$. We say that $G$ contains an $f$-\emph{factor} if and only if there exists a spanning subgraph $H\subseteq G$ such that for every $v\in V$ we have $\deg_H(v)=f(v)$. The following lemma due to Ore extends \cref{lem:gale-ryser} and gives a characterization of $f$-factors in bipartite graphs.
 
\begin{lemma}[\cite{OreStudies63}] \label{orestheorem}
Let $B=(X \cup Y,E)$ be a bipartite graph. Let $f: V \to \mathbb{N}$ be such that $\sum_{x \in X}f(x)= \sum_{y \in Y}f(y)$. Then $B$ contains an $f$-factor if and only if for all subsets $S \subseteq X$, $T \subseteq Y$,

\begin{align}\label{eq:ore}
    e(S,T) + \sum_{y \in  Y\setminus T}f(y) \geq \sum_{x \in S}f(x)
\end{align} 
\end{lemma}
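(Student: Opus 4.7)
The proof plan is to handle the two directions separately, with the reverse direction reduced to Hall's theorem via a standard vertex-blowup trick.

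For necessity, suppose $H \subseteq B$ is an $f$-factor and fix $S \subseteq X$, $T \subseteq Y$. Every edge of $H$ incident to some vertex of $S$ either has its $Y$-endpoint in $T$ or in $Y \setminus T$. The total number of such $H$-edges is $\sum_{x \in S} f(x)$ because each $x \in S$ has $H$-degree exactly $f(x)$. Those whose $Y$-endpoint lies in $T$ form a subset of $E(S,T)$, contributing at most $e(S,T)$. Those whose $Y$-endpoint lies in $Y \setminus T$ are bounded by $\sum_{y \in Y \setminus T} f(y)$ since every $y$ has $H$-degree $f(y)$. Summing the two bounds gives \eqref{eq:ore}.

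For sufficiency, I will reduce to Hall's marriage theorem (\Cref{thm:halls}) by constructing an auxiliary bipartite graph $B'$. Its parts are the blow-ups
$$X' := \{(x,i) : x \in X,\ 1 \leq i \leq f(x)\}, \qquad Y' := \{(y,j) : y \in Y,\ 1 \leq j \leq f(y)\},$$
and we declare $(x,i)(y,j) \in E(B')$ whenever $xy \in E(B)$. The balance hypothesis $\sum_{x \in X} f(x) = \sum_{y \in Y} f(y)$ ensures $|X'| = |Y'|$. A perfect matching of $B'$ projects to an $f$-factor of $B$, since each copy of $x$ is matched to a copy of some neighbor $y$, and each $y$ receives exactly $f(y)$ matches.

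It therefore suffices to verify Hall's condition in $B'$. The key observation is that the worst-case subsets $W \subseteq X'$ for Hall are the \emph{saturated} ones: if some copy of $x$ lies in $W$, enlarging $W$ by the remaining $f(x)-1$ copies of $x$ keeps the neighborhood $N_{B'}(W)$ unchanged while only increasing $|W|$. Hence we may assume $W = \bigcup_{x \in S}\{(x,1),\ldots,(x,f(x))\}$ for some $S \subseteq X$, giving $|W| = \sum_{x \in S} f(x)$ and $|N_{B'}(W)| = \sum_{y \in N_B(S)} f(y)$. Applying the hypothesis \eqref{eq:ore} with this $S$ and $T := Y \setminus N_B(S)$, and noting $e(S, Y \setminus N_B(S)) = 0$ by definition of the neighborhood, yields
$$\sum_{y \in N_B(S)} f(y) \;\geq\; \sum_{x \in S} f(x) \;=\; |W|,$$
which is Hall's condition. \Cref{thm:halls} then produces a perfect matching in $B'$, and projecting gives the required $f$-factor.

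The only genuinely nontrivial step is the reduction to saturated $W$, which is routine once noticed; everything else is bookkeeping. I do not foresee a real obstacle.
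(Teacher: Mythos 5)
Your necessity argument is correct, but the sufficiency direction has a genuine gap: the naive vertex blow-up does not encode the constraint that each edge of $B$ may be used at most once. In $B'$, two copies $(x,1),(x,2)$ of the same vertex $x$ can be matched to two copies $(y,1),(y,2)$ of the same neighbour $y$; the projection then uses the edge $xy$ twice, so what you get is a degree-$f$ sub-\emph{multigraph} of $B$, not a spanning subgraph $H\subseteq B$ with $\deg_H(v)=f(v)$. Concretely, let $B$ consist of the single edge $xy$ with $f(x)=f(y)=2$. Then $B'\cong K_{2,2}$ has a perfect matching, yet $B$ has no $f$-factor; and indeed \eqref{eq:ore} fails for $S=\{x\}$, $T=\{y\}$ (it would require $1\geq 2$). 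This example also exposes the logical symptom: your Hall verification only ever invokes \eqref{eq:ore} with the single choice $T=Y\setminus N_B(S)$, which is a strictly weaker hypothesis than the lemma's (in the example that weaker condition holds, since $f(y)=2\geq 2$), so if the reduction were sound it would prove a false statement. Any correct proof must use \eqref{eq:ore} for general $T$: the set $T$ is exactly the set of $Y$-vertices whose contribution is charged per \emph{edge} of $B$ rather than per unit of $f$, and this unit edge-capacity is precisely what the blow-up forgets.

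To repair the argument you need a construction that limits each original edge to one use: either Tutte's gadget (subdivide each edge $xy$ by a pair of new vertices joined to the copies of $x$ and of $y$, so that a perfect matching of the auxiliary graph selects each edge of $B$ at most once), or a direct max-flow/min-cut argument (source-to-$x$ capacity $f(x)$, capacity $1$ on each edge of $B$, $y$-to-sink capacity $f(y)$), in which the min-cut analysis produces exactly the inequality \eqref{eq:ore} for arbitrary $S$ and $T$. For comparison, the paper does not prove this lemma at all but cites Ore; as written, your reduction establishes only the multigraph version of the statement, not the lemma itself.
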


\subsection{Properties of Random Graphs}
We now introduce properties of typical random graphs that will be used in the proof of our main result. From now on, when we say that $B$ is a balanced bipartite graph, we name the parts $X$ and $Y$, two disjoint copies of $[n]$. The edge $xy$ refers to the edge between the  $x$th vertex in part $X$, and $y$th vertex in part $Y$. Even though the graph is undirected, in our convention, $xy \neq yx$. For convenience, we make the following technical definition:
\begin{definition}\label{def:good}
    Let $\epsilon>0$ and $p\in (0,1)$. We say that a balanced bipartite graph $B$ is $\left(\varepsilon,p\right)$-good if  
    $$e(S,T)\in \left(1\pm \epsilon\right)stp$$
    for all $S\subseteq X$ and $T\subseteq Y$ with $|S|=s$ and $|T|=t$ such that
\begin{enumerate}
\item[$(a)$]  $s,t\geq \frac{n}{\log^{2/3}n}$, or
\item[$(b)$]  $s \leq \frac{n}{\log^{2/3}n}$ and 
$s\sqrt{\log n} + t  \geq n$, or
\item[$(c)$]  $t \leq \frac{n}{\log^{2/3}n}$ and 
$t\sqrt{\log n} + s  \geq n$.
\end{enumerate}
\end{definition}


For a graph $B$, let $\Delta_2(B)$ be the second largest degree of a vertex in $B$. The following lemma introduces some key properties of typical bipartite graphs.
\begin{lemma}\label{typicalbipartite}
For $1/2 \geq p \geq \frac{\log^{10} n}{n}$, a random bipartite graph $B:= B_{n,p}$ whp satisfies the following properties: 

\begin{enumerate} [$(B1)$]
\setcounter{enumi}{0}
\item $B$ is $\left(\frac{1}{\log^2n},p\right)$-good.
\item $\Delta(B)-\delta(B) < 4 \sqrt{np\log n}$, and $\Delta(B)= (1+o(1))np$
\item For all vertices $v \in B$ except for the two vertices with the largest degrees we have 
$$\Delta_2(B)-\deg(v) \geq \frac{\sqrt{np}}{\log n}.$$
\end{enumerate}
\end{lemma}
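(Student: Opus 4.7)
The plan is to prove each part by combining a Chernoff bound (\Cref{chernoff}) with an appropriate union bound. Each vertex degree $\deg_B(v)$ is distributed as $\Bin(n,p)$, and the degrees within each side $X$ or $Y$ are mutually independent; the only coupling is through shared edges between $X$ and $Y$.

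For (B2), Chernoff applied to $\deg_B(v)$ with relative deviation $\delta = 2\sqrt{\log n/(np)}$ gives $\Prob(|\deg_B(v)-np|>2\sqrt{np\log n}) \leq 2n^{-4/3}$, and a union bound over all $2n$ vertices yields both conclusions. For (B1), we split into the three regimes of \Cref{def:good}. In regime (a) with $s,t \geq n/\log^{2/3} n$, Chernoff with $\delta=1/\log^2 n$ produces failure probability $\exp(-stp/(3\log^4 n))$; since $stp \geq n\log^{26/3} n$ while $\binom{n}{s}\binom{n}{t} \leq \exp(O(n\log\log n))$, the union bound succeeds. In regimes (b) and (c), the hypothesis $s\sqrt{\log n}+t\geq n$ forces the complement of the larger side to have size at most $s\sqrt{\log n}$ (or its symmetric version), so the union bound runs over only $\exp(O(s\log^{3/2} n))$ pairs, which the Chernoff exponent $\geq s\log^6 n/6$ still dominates since the larger side has size $\geq n/2$.

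The heart of the argument is (B3), which says $d_2 - d_3 \geq \tau$, where $d_2 = \Delta_2(B)$, $d_3$ denotes the third largest degree, and $\tau := \sqrt{np}/\log n$. The heuristic is that for $2n$ approximately i.i.d.\ $\Bin(n,p)$ random variables the consecutive top order statistics are typically separated by $\Theta(\sqrt{np/\log n})$, which exceeds $\tau$ by a factor of $\sqrt{\log n}$. To formalize, I would choose integer thresholds $L<U$ with $L-np,\, U-np \approx \sqrt{2np\log n}$ so that $\Exp[N_{\geq L}] \approx \log n$ and $\Exp[N_{\geq U}] = o(1)$ (where $N_{\geq k}$ counts vertices of degree $\geq k$); Chernoff on $N_{\geq L}$ and Markov on $N_{\geq U}$ then force $L \leq d_2 \leq U$ whp. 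The bad event $d_2-d_3<\tau$ implies that at least three vertices have degrees in some window $[k,k+\tau)$ with $k \in [L-\tau,U]$. Partitioning this range into $O(\log^{3/2} n)$ overlapping windows of length $\tau$, the expected count in the $i$-th window is $2n\tau f(L+i\tau)$ (with $f$ the $\Bin(n,p)$ pmf), which starts at $O(1/\sqrt{\log n})$ and decays geometrically in $i$. The third-moment bound $\Prob(N\geq 3) = O(\Exp[N]^3)$ for sums of independent indicators, summed over windows, then yields $\Prob(d_2-d_3<\tau) = O(1/\log n) = o(1)$.

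The main obstacle is the dependence between the degrees of $X$- and $Y$-vertices through their shared edges, which prevents a direct application of the third-moment bound. I plan to handle this by conditioning on the at most nine edges among any candidate triple of vertices, which decouples the remaining random edges contributing to their degrees and inflates the relevant joint probabilities by only a multiplicative $O(1)$ factor.
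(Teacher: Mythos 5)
Your treatments of (B1) and (B2) are correct and essentially the paper's own: the paper proves (B1) by the same three-regime Chernoff-plus-union-bound computation and handles (B2) by citing the standard textbook argument, which is exactly your Chernoff bound over $2n$ vertices; your device of conditioning on the few shared edges to decouple $X$- and $Y$-degrees is also fine. The problem is (B3), where the paper invokes Bollob\'as's Theorem 3.15 and your replacement argument has a genuine gap.

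First, the event containment is wrong: $d_2-d_3<\tau$ forces only the second- and third-largest degrees into a common interval of length $\tau$; the maximum degree may sit far above it, so ``at least three vertices have degrees in some window $[k,k+\tau)$'' is not implied. The configuration actually forced is ``two degrees in a $\tau$-window and a third vertex at or above the window.'' Second, and more seriously, your quantitative estimates are inconsistent with your choice of $L$. If $\Exp[N_{\geq L}]\approx\log n$, then $L\approx np+\sigma\sqrt{2\log n}$ with $\sigma=\sqrt{np(1-p)}$, and a window of length $\tau=\sqrt{np}/\log n$ captures a fraction $\Theta\big(\sqrt{2\log n}\,\tau/\sigma\big)=\Theta(1/\sqrt{\log n})$ of the tail mass at $L$; hence the expected number of degrees in the lowest window is $\Theta(\log n)\cdot\Theta(1/\sqrt{\log n})=\Theta(\sqrt{\log n})$, not $O(1/\sqrt{\log n})$, and the bound $\PP(N\geq 3)=O(\Exp[N]^3)$ is vacuous there. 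Moreover the per-window expectation decays only by a factor $1-\Theta(1/\sqrt{\log n})$ per window and there are $\Theta(\sqrt{\log n}\,\log\log n)$ windows, so the sum is not controlled by its first term. This is not a bookkeeping slip: with $\Exp[N_{\geq L}]\approx\log n$ there really are, whp, many pairs (even triples) of vertices of degree at least $L$ whose degrees differ by less than $\tau$, so no moment count that forgets the rank information (that at most one vertex lies above the window containing $d_2$ and $d_3$) can succeed.

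A correct repair is to raise the threshold: choose $L$ with $\mu:=\Exp[N_{\geq L}]=\Theta(\log\log n)$. Then $\PP(d_2<L+\tau)\leq\PP(N_{\geq L+\tau}\leq 1)=O\big((1+\mu)e^{-\mu}\big)=o(1)$ (a Poisson-type bound, not Chernoff concentration), while a plain first-moment bound on pairs of distinct vertices with both degrees at least $L$ and within $\tau$ of each other gives expectation $O\big(\mu^2\cdot\sqrt{2\log n}\,\tau/\sigma\big)=O\big(\log^2\log n/\sqrt{\log n}\big)=o(1)$, after your conditioning trick for the one possibly shared edge. On the complement of these two events, either $d_3<L$, in which case $d_2-d_3>\tau$ because $d_2\geq L+\tau$, or $d_3\geq L$, in which case the rank-2 and rank-3 vertices would form a forbidden close pair; either way (B3) holds, and no windows or third moments are needed.
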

\begin{proof}    
(B2) is the bipartite version of Theorem 3.5 in \cite{frieze2015introduction}, and follows from the same proof.\\
(B3) is a weaker bipartite version of Theorem 3.15 in \cite{bollobas1998random}  proved by Bollob{\'a}s (it can be seen by taking $m$ to be a slowly increasing function). The same proof works for the bipartite case.\\
It is thus remains to prove (B1). We prove each case in Definition \ref{def:good} separately: 
   
    \begin{enumerate}[(a)]
            \item $s,t \geq n/\log^{2/3} n$. It follows from Chernoff's bounds and the union bound that the probability that there exist $S$ and $T$ of sizes $s$ and $t$, respectively, with $e(S,T)\notin \left(1\pm \delta \right)stp$ with $\delta=\frac{1}{\log^2 n}$
            is at most 
            \begin{align*}
            \sum_{s,t\geq n/\log^{2/3} n}\binom{n}{s}\binom{n}{t}2\exp\left(-\delta^2stp/3\right)&\leq 2 \cdot 4^n \exp\left(-\Theta(\delta^2n^{2}p/\log^{4/3}n)\right) \\
            &\leq 2 \cdot 4^n\exp\left(-\Theta(\delta^2 n\log^{5}n)\right) \\
            &= o(1). 
            \end{align*} 
            where the second from the last inequality holds since $np/\log^{4/3}n\geq \log^5n$.
            
            \item 
            $s \leq \frac{n}{\log^{2/3}n}$ and 
$s\sqrt{\log n} + t + 2 \geq n$. It follows from Chernoff's bounds (\cref{chernoff}) and the union bound that the probability there exist $S$ and $T$ with sizes $s$ and $t$, respectively, such that the number of edges between them satisfies $e(S,T)\notin \left(1 \pm \delta \right)stp$, where $\delta=\frac{1}{\log^2 n}$,
            is at most 
            \begin{align*}
            &\sum_{s=1}^{n/\log^{2/3}n}\sum_{t=n-2-s\sqrt{\log n}}^n \binom{n}{s}\binom{n}{t}2\exp\left( -\delta^2stp/3\right) \\
            &\leq 2n\sum_{s=1}^{n/\log^{2/3} n}{n \choose s}\binom{n}{s\sqrt{\log n}+2}\exp\left(-\delta^2 s(n-s\sqrt{\log n}-2)/3\right)\\ 
               &\leq 2n\sum_{s=1}^{n/\log^{2/3}n} n^{s+s\sqrt{\log n}+2}\exp\left(-\delta^2 sn/4\right) \\
               &= o (1) 
            \end{align*}

            \item Similar to the proof of (b).
        \end{enumerate}
        This completes the proof. 
\end{proof}

We will also need the following properties of $D_{n,p}$:
\begin{lemma}\label{typicaldirected} 
For $p \geq \frac{\log^{10} n}{n}$, the digraph $D:=D_{n,p}$ whp satisfies the following properties:
\begin{enumerate}[$(D1)$]
    \item Every vertex $v\in V(D)$ satisfies
    \begin{align*}
        (1-o(1))np\leq deg_D^+(v),\deg_D^{-}(v)\leq (1+o(1))np.
    \end{align*}
    \item Every set $X \subseteq V(D)$ of size $|X| \leq \frac{2\log^2 n}{p}$ satisfies
    $$e_D(X) \leq 12|X|\log^2n.$$
   
    \item Every pair of sets $X,Y \subseteq V(D)$ of sizes $|X|, |Y| \geq \frac{\log^{1.05} n}{p}$ satisfy
    $$e_D(X,Y) \leq (1+o(1))|X||Y|p.$$

\end{enumerate}
\end{lemma}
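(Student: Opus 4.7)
The plan is to prove each of the three items separately, each via a Chernoff tail estimate followed by a union bound. This is feasible because each of $\deg_D^{\pm}(v)$, $e_D(X)$, and $e_D(X,Y)$ is a sum of independent Bernoulli$(p)$ random variables with mean easily computed. For (D1), $\deg_D^+(v)$ and $\deg_D^-(v)$ are each distributed as $\Bin(n-1,p)$ with mean $\mu=(n-1)p \geq \log^{10}n/2$. Applying \Cref{chernoff} with $\delta = 1/\log^2 n$ yields
\[
\Pr(|\deg_D^{\pm}(v)-\mu| > \mu/\log^2 n) \leq 2\exp(-\mu/(3\log^4 n)) \leq 2\exp(-\log^{5} n),
\]
and a union bound over the $2n$ in- and out-degrees finishes (D1) with room to spare.

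For (D2), fix $1 \leq s \leq 2\log^2 n/p$ and a set $X \subseteq V(D)$ with $|X|=s$, so $e_D(X) \sim \Bin(s(s-1),p)$ with mean $\mu = s(s-1)p < 2s\log^2 n$. For $a = 12s\log^2 n$ we have $a \geq 6\mu$, hence by the first-moment union bound
\[
\Pr(e_D(X) \geq a) \leq \binom{s(s-1)}{a} p^a \leq \left(\frac{e\mu}{a}\right)^a \leq (e/6)^{12s\log^2 n} \leq \exp(-9s\log^2 n).
\]
Summing over $s$ and $X$,
\[
\sum_{s=1}^{2\log^2 n / p}\binom{n}{s}\exp(-9s\log^2 n) \leq \sum_{s} \exp(s\log n - 9s\log^2 n) = o(1),
\]
which gives (D2).

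For (D3), take a small $\delta = \log^{-0.02}n = o(1)$ and fix disjoint $X,Y$ with $|X|=s \geq \log^{1.05}n/p$ and $|Y|=t \geq \log^{1.05}n/p$. Then $\mathbb{E}[e_D(X,Y)] = stp$, and from $stp \geq \max(s,t)\log^{1.05}n \geq \tfrac{s+t}{2}\log^{1.05}n$, \Cref{chernoff} yields
\[
\Pr\bigl(e_D(X,Y) > (1+\delta)stp\bigr) \leq 2\exp(-\delta^2 stp/3) \leq 2\exp\bigl(-(s+t)\log^{1.01}n/6\bigr).
\]
Since the union-bound factor $\binom{n}{s}\binom{n}{t} \leq \exp((s+t)\log n)$ is dwarfed by this estimate, summing over $s,t,X,Y$ yields $o(1)$ and hence (D3). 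There is no substantive obstacle anywhere in the proof; the only genuinely delicate choice is the deviation parameter $\delta = \log^{-0.02}n$ in (D3), which must simultaneously satisfy $\delta = o(1)$ (to produce the $(1+o(1))$ factor in the statement) and $\delta^2 \log^{1.05}n \gg \log n$ (so that concentration beats the $\binom{n}{s}\binom{n}{t}$ factor in the union bound); the exponent $0.02$ is chosen with room to spare inside the window $0 < 0.02 < 0.025$ allowed by this constraint.
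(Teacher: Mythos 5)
Your proposal is correct and follows essentially the same route as the paper: Chernoff/first-moment estimates combined with union bounds, where the paper simply cites the textbook degree-concentration result for (D1) (your direct computation is fine), uses the same binomial-coefficient bound $\binom{s(s-1)}{a}p^a\le (e\mu/a)^a$ for (D2), and argues (D3) exactly as you do except that it leaves the choice of the deviation parameter implicit, whereas your explicit $\delta=\log^{-0.02}n$ inside the window $0<c<0.025$ makes the $(1+o(1))$ claim cleaner. The only nitpick is that in (D3) you fix \emph{disjoint} $X,Y$ while the statement allows arbitrary pairs; this costs nothing, since for overlapping sets $e_D(X,Y)$ is still a sum of independent Bernoulli$(p)$ indicators over ordered pairs $(x,y)$ with $x\ne y$, with mean $(1-o(1))|X||Y|p$, so the identical Chernoff-plus-union-bound argument applies (and the paper only ever invokes (D3) for disjoint sets anyway).
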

\begin{proof}
$(D1)$ is the directed version of Theorem 3.5 in \cite{frieze2015introduction}, and follows from the same proof. 
First, we prove $(D2)$. Observe that for any set $X$, since $e_D(X)\sim \mathrm{Bin}(|X|^2,p)$, the probability that $e_D(X) \geq 12|X|\log^2n$ is upper bounded by 
    \begin{align*}{ |X|^2 \choose 12|X|\log^2n}p^{12|X|\log^2n}&\leq \left(\frac{e|X|p}{12\log^2n}\right)^{12|X|\log^2n}\\ 
    & < 2^{-12|X|\log^2n}.
    \end{align*}
    Now, since there are at most 
    $${n \choose |X|}\leq n^{|X|}=2^{|X|\log n}$$ 
    sets of size $|X|$, it follows from the union bound that the probability to have such a set $X$ is at most 
    $$\sum_{|X|\leq \frac{2\log^2n}{p}}2^{|X|\log n-12|X|\log^2n}=o(1)$$
as desired. 
    
Now we prove $(D3)$. Given any two sets $X, Y,$ we have that
    $$\Exp\left( e_D(X,Y)\right)= |X||Y|p.$$
    By Chernoff's bound (\Cref{chernoff}), for any $\delta< 1$, 
    $$\Prob\left( e_D(X,Y)\notin (1\pm \delta)|X||Y| \right)< 2e^{-\delta^22|X||Y|p/3}=e^{-\Theta(|X||Y|p)}.$$
    Therefore, by applying the union bound we obtain that the probability to have such sets is at most
    \begin{align*}
        &\sum_{|X|,|Y|\geq \frac{\log^{1.05}n}{p}}\binom{n}{|X|}\binom{n}{|Y|}e^{-\Theta(|X||Y|p)}\leq\\ &\sum_{|X|,|Y|\geq \frac{\log^{1.05}n}{p}}\exp\left((|X|+|Y|)\log n-\Theta(|X||Y|p))\right)=o(1),
    \end{align*}
where the last inequality holds since for $|X|\geq |Y|\geq \frac{\log^{1.05}n}{p}$ we have
$$|X||Y|p\geq |X|\log^{1.05}n=\omega((|X|+|Y|)\log n).$$
This completes the proof.
\end{proof}

\section{Covering $B_{n,p}$ with perfect matchings}\label{sec:CoveringBnp}


In this section, we prove the following key lemma:
 \begin{lemma}\label{cor:covering} 

    Let $B:= B_{n,p}$ with $1/2 \geq p\geq \frac{\log^{10}n}{n}$. Then, whp, for all subgraphs $B'$ of $B$ obtained by deleting $\Delta(B)-\Delta_2(B)$ edges from the vertex of largest degree, there exist two families of perfect matchings of $B'$, $\mathcal M_1$ and $\mathcal M_2$, for which the following holds:
    \begin{enumerate}
    \item For $i\in \{1,2\}$ and for every $M\neq M'\in \mathcal M_i$, we have $M\cap M'=\emptyset$, and
    \item $\bigcup_{M\in \mathcal M_1\cup \mathcal M_2} M=E(B')$, and 
    \item $|\mathcal M_1|+|\mathcal M_2|=\Delta(B')$.
    \end{enumerate}
    \end{lemma}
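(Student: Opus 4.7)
The plan is to realize $B'$ as the edge-union $G_1\cup G_2$ of two spanning regular subgraphs $G_1,G_2\subseteq B'$, where $G_1$ is $r_1$-regular and $G_2$ is $r_2$-regular with $r_1+r_2=\Delta(B')$. Given such a pair, \Cref{thm:hallscor} decomposes $G_1$ into $r_1$ edge-disjoint perfect matchings (the family $\mathcal{M}_1$), and similarly for $G_2$; together these automatically satisfy conditions (1)--(3).

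I would set $r_1:=\delta(B')$ and $r_2:=\Delta(B')-\delta(B')$. A per-vertex degree count gives $\deg_{G_1\cap G_2}(v)=r_1+r_2-\deg_{B'}(v)=\Delta(B')-\deg_{B'}(v)=:f(v)$, so the construction reduces to two factor computations. First I would use Ore's theorem (\Cref{orestheorem}) to find an $f$-factor $H$ of $B'$. Then I would apply Ore once more to $B'\setminus H$ to find an $(r_1-\deg_H(v))$-factor, whose union with $H$ is the desired $r_1$-regular subgraph $G_1$; finally $G_2:=H\cup(B'\setminus G_1)$ is automatically $r_2$-regular and satisfies $G_1\cup G_2=E(B')$.

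The bulk of the work is verifying Ore's inequality
\[
e_{B'}(S,T) \ \geq\ F(S)+F(T)-F(Y),\qquad F(A):=\sum_{v\in A}f(v),
\]
for every $S\subseteq X$ and $T\subseteq Y$. This is automatic whenever $F(\bar T)\geq F(S)$. By \Cref{typicalbipartite}(B3), $f(v)\geq \sqrt{np}/\log n$ for all but at most two vertices, while (B2) gives $f(v)\leq O(\sqrt{np\log n})$ everywhere; together these imply that the trivial regime covers all $(S,T)$ with $|\bar T|\geq C|S|\log^{3/2}n$ for a suitable constant $C$ (and by symmetry the mirror region). In the remaining ``transition'' regime one has $|T|$ very close to $n$, and I would write $e_{B'}(S,T)=e_{B'}(S,Y)-e_{B'}(S,\bar T)$, combining the concentration of $\deg_{B'}(x)$ around $np$ from (B2) with the upper estimate $e_{B'}(\bar S,\bar T)\leq(1+o(1))(n-s)(n-t)p$ from (B1) applied to $(\bar S,\bar T)$, and invoking the algebraic identity $(n-s)(n-t)-(n-s-t)n=st$ to reduce Ore's inequality to an elementary comparison. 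The second Ore application, producing the $(r_1-\deg_H(v))$-factor of $B'\setminus H$, is essentially a repeat: $B'\setminus H$ has degree sequence $2\deg_{B'}(v)-\Delta(B')$, still tightly concentrated around $np$, and the same case analysis carries through.

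The principal obstacle I expect is making the Ore verification airtight in the transition regime just described, where neither the trivial bound nor the direct (B1) estimate comfortably dominates, and the two must be combined with care to absorb error terms of order $n|\bar T|p/\log^2 n$. A secondary issue is uniformity over the $\Delta(B)-\Delta_2(B)$ edges to be deleted at $x_\star$; this is not a real obstacle, since deleting $O(\sqrt{np\log n})$ edges modifies at most that many degrees by one and preserves (B1)--(B3) with only $o(1)$ losses, so the whole argument is uniform in this choice.
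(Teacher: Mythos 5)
Your overall architecture is sound and in fact close in spirit to the paper's: the paper also produces $\mathcal M_1$ from a spanning nearly-$np$-regular subgraph (found via Gale--Ryser, \Cref{thm:matchingcovering1}) and $\mathcal M_2$ by completing the leftover graph to a regular graph with an $f$-factor whose values are exactly your $f(v)=\Delta(B')-\deg_{B'}(v)$, reusing already-covered edges, and then decomposing via \Cref{thm:hallscor}. So conditions (1)--(3) and the counting are not the issue, and your remark about uniformity over the deleted edges at $x_\star$ is fine.

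The genuine gap is in your verification of Ore's condition, precisely at the step you flag but then mis-describe. With $f(v)=\Delta(B')-\deg_{B'}(v)$ you only know $f(v)\geq \sqrt{np}/\log n$ (outside at most two vertices) and $f(v)\leq 4\sqrt{np\log n}+1$, so the ``trivial regime'' $F(\bar T)\geq F(S)$ is only guaranteed when $|\bar T|\gtrsim |S|\log^{3/2}n$; its complement is \emph{not} confined to $|T|$ close to $n$ once $|S|=\Theta(n)$. Concretely, take $S$ to be the $n/2$ vertices of $X$ of smallest degree and $|T|=n/\log n$: then $F(S)$ can (as far as (B2)--(B3) guarantee) be as large as $2n\sqrt{np\log n}$ while $F(\bar T)$ is only guaranteed to be of order $n\sqrt{np}/\log n$, and the mirrored form $e(S,T)+F(\bar S)\geq F(T)$ (equivalent since $F(X)=F(Y)$) fails for the same reason; meanwhile \Cref{typicalbipartite}(B1) gives no lower bound on $e(S,T)$, because $t<n/\log^{2/3}n$ and $t\sqrt{\log n}+s<n$, so none of the cases of \Cref{def:good} applies. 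Your proposed decomposition $e_{B'}(S,T)=e_{B'}(S,Y)-e_{B'}(S,\bar T)$ is useless here, since $\bar T$ is almost all of $Y$ and $e(S,\bar T)$ is of the same order as $e(S,Y)$; likewise ``(B1) applied to $(\bar S,\bar T)$'' does not give the upper bound you need (the relevant pairs need not satisfy any of the goodness size conditions, and the quantity you must control is $e(S,\bar T)$, not $e(\bar S,\bar T)$). Ore's inequality is in fact true in this window, because $e(S,T)\approx stp=\Theta(n^2p/\log n)\gg n\sqrt{np\log n}\geq F(S)$, but to prove it you need an edge-concentration statement beyond (B1) -- e.g.\ that whp $e(S,T)\geq stp/2$ for every pair with $stp\geq Cn\log n$ -- which is easy by Chernoff plus a union bound but is missing from your toolkit and must be added; this is exactly the mismatch between the $\sqrt{\log n}$ window built into \Cref{def:good} and the $\log^{3/2}n$ spread of your $f$ (compare the hypothesis $f\geq M/\sqrt{\log n}$ in \Cref{lemma:f-factor}). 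A secondary caveat: your second Ore application (degrees $\deg_{B'}(v)-(\Delta(B')-\delta(B'))\approx np$) is not ``the same case analysis'' but a Gale--Ryser-type computation in the regime $s+t\approx n$, with its own boundary cases when $\bar S,\bar T$ are tiny; it works, but needs the $|S||T|\geq n(|S|+|T|-n)$ identity and explicit absorption of the $O(\sqrt{np\log n})$ degree-spread errors rather than a repetition of the first argument.
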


Before proving Lemma \ref{cor:covering} we will need the following consequence of \Cref{orestheorem} when the bipartite graph $B$ is $\left(\frac{1}{\log n},p\right)$-good. 

\begin{lemma}\label{lemma:f-factor}
    Let $1/2 \geq p\geq \frac{\log^{10} n}{n}$ and $B$ be a $\left(\frac{1}{\log n},p\right)$-good bipartite graph. Let $M \leq \frac{np}{2\log^{4/3} n}$ be an integer and $f:V(B) \to \mathbb{N}$. Suppose that:
    \begin{enumerate}
        \item[(i)] $ f(v) \leq M$ holds for all $v\in V$. 
        \item[(ii)]  $f(v) \geq M/\sqrt{\log n}$
    \end{enumerate}
    Then, the graph $B$ contains an $f$-factor.
\end{lemma}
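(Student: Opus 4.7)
The plan is to apply Ore's theorem (\Cref{orestheorem}) to $B$ and $f$. The balance $\sum_{X}f=\sum_{Y}f$ that \Cref{orestheorem} requires is implicit, as it is necessary for any $f$-factor to exist. Writing $s=|S|$ and $t=|T|$, I must check, for all $S\subseteq X$, $T\subseteq Y$, that
\[
e(S,T)+\sum_{y\in Y\setminus T}f(y)\ \ge\ \sum_{x\in S}f(x). \qquad(\star)
\]
Using the balance, $(\star)$ is equivalent to its dual $e(S,T)+\sum_{x\in X\setminus S}f(x)\ge \sum_{y\in T}f(y)$, which I will call $(\star\star)$; I may verify whichever is more convenient. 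I split into three cases based on the sizes $s,t$.

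If $s\sqrt{\log n}+t\le n$, then $s\le (n-t)/\sqrt{\log n}$, and the pointwise bounds $f(v)\in[M/\sqrt{\log n},M]$ give
\[
\sum_{y\in Y\setminus T}f(y)\ge\frac{(n-t)M}{\sqrt{\log n}}\ge sM\ge \sum_{x\in S}f(x),
\]
proving $(\star)$. By the analogous argument, the case $t\sqrt{\log n}+s\le n$ establishes $(\star\star)$.

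The remaining case is $s\sqrt{\log n}+t>n$ together with $t\sqrt{\log n}+s>n$. Adding gives $(s+t)(\sqrt{\log n}+1)>2n$, so at least one of $s,t$ exceeds $n/\log^{2/3}n$; together with the two strict inequalities, at least one of the three shapes in \Cref{def:good} applies to $(S,T)$: (a) $s,t\ge n/\log^{2/3}n$; (b) $s<n/\log^{2/3}n$ with $s\sqrt{\log n}+t>n$; or (c) $t<n/\log^{2/3}n$ with $t\sqrt{\log n}+s>n$. Since $B$ is $(1/\log n,p)$-good, in each sub-case
\[
e(S,T)\ge \Bigl(1-\tfrac{1}{\log n}\Bigr)stp.
\]
Using $M\le np/(2\log^{4/3}n)$, whenever $t\ge n/\log^{2/3}n$ one has $tp\ge 2M\log^{2/3}n$, hence $e(S,T)\ge (1-o(1))\cdot 2\log^{2/3}n\cdot sM\ge sM\ge \sum_{x\in S}f(x)$, giving $(\star)$ in sub-cases (a) and (b). In sub-case (c), where $t$ may be small but $s\ge n/\log^{2/3}n$, the symmetric estimate produces $e(S,T)\ge tM\ge \sum_{y\in T}f(y)$, yielding $(\star\star)$.

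The main obstacle is the compatibility between the diagonals $s\sqrt{\log n}+t\sim n$, past which the pointwise $f$-bounds alone no longer carry $(\star)$, and the three shapes in \Cref{def:good}, on which goodness provides a strong lower bound for $e(S,T)$: the thresholds have been arranged so that every pair $(S,T)$ outside the trivial ranges falls into at least one goodness shape, and the factor $\log^{2/3}n$ of slack between $tp$ (resp.\ $sp$) and $M$ is precisely what lets goodness absorb the Ore inequality with room to spare.
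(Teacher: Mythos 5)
Your proof is correct and follows essentially the same route as the paper: apply Ore's theorem and verify its condition by combining the $(1/\log n,p)$-goodness lower bound on $e(S,T)$ (in whichever shape of \Cref{def:good} applies) with the pointwise bounds $M/\sqrt{\log n}\le f\le M$, using $M\le np/(2\log^{4/3}n)$ to absorb the sums. The only organizational difference is that you case-split directly on the diagonals $s\sqrt{\log n}+t\lessgtr n$ and make the duality $(\star)\Leftrightarrow(\star\star)$ explicit via the balance condition, which is exactly what the paper's ``without loss of generality'' step and its ``otherwise (\ref{eq:ore}) immediately holds'' reduction accomplish implicitly.
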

\begin{proof}
    
 By \Cref{orestheorem}, the graph $B$ contains an $f$-factor if and only if equation (\ref{eq:ore}) is satisfied. Let $S \subseteq X$ and $T \subseteq Y$. 
Lets first assume that $|S|,|T|\geq \frac{n}{\log^{2/3} n}$. Observe that because $B$ is $\left(\frac{1}{\log^2n},p\right)$-good, it holds that $e(S,T) \in (1\pm o(1))\frac{n^2p}{\log^{4/3} n}$. Further, observe that because $|S|$ is bounded above by $n$, and $f(x) \leq M$ for all $x$, we have that $n M \geq \sum_{x \in S}f(x)$. Now, we can see that equation (\ref{eq:ore}) holds, since
\begin{align*}
    e(S,T) + \sum_{y \in  Y \setminus T}f(y) \geq e(S,T) \geq (1+o(1))\frac{n^2p}{\log^{4/3} n} \geq n M \geq \sum_{x \in S}f(x).
\end{align*}

Therefore, we may assume that either $|S|< \frac{n}{\log^{2/3}n}$ or $|T|< \frac{n}{\log^{2/3}n}.$

Without loss of generality we assume that $|S|< \frac{n}{\log^{2/3}n}$. We can further assume that $\sum_{y \in Y \setminus T} f(y) < \sum_{x \in S}f(x)$ (otherwise equation (\ref{eq:ore}) immediately holds). Since $f(v) \geq \frac{M}{\sqrt{\log n}}$ for all vertices $v$,
\begin{align*}
    \frac{M}{\sqrt{\log n}}|Y\setminus T|\leq \sum_{y\in Y\setminus T}f(y)<\sum_{x \in S}f(x) \leq M|S|.
\end{align*}
Thus, we obtain that $|Y\setminus T|<|S|\sqrt{\log n}$ and consequently $|S|\sqrt{\log n}+|T|\geq n$. Hence, by part (b) of \Cref{def:good} we have that
\begin{align*}
    e(S,T) + \sum_{y \in  Y \setminus T}f(y) \geq e(S,T) \geq (1+o(1))p|S||T|> \frac{2np}{3}|S| \geq M|S|\geq \sum_{x \in S} f(x)
\end{align*}
Thus, equation (\ref{eq:ore}) holds.
\end{proof}

Also before proving \cref{cor:covering}, we will need the following result about finding regular subgraphs in $B_{n,p}$:
\begin{lemma}{\label{thm:matchingcovering1}}
    Let $1/2 \geq p\geq\frac{\log^{10} n}{n}$ and $B := B_{n,p}$ be a random bipartite graph. The following holds with high probability: For all subsets of edges $E\subseteq B$  with $|E|\leq \sqrt{np}\log n$, the subgraph $B\setminus E$ contains a $\left(1-\frac{2}{\log^2 n}\right)np$-regular subgraph. 
\end{lemma}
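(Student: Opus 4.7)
My plan is to apply the Gale--Ryser lemma (\Cref{lem:gale-ryser}) to $B \setminus E$ with target regularity $r := \lfloor (1 - 2/\log^2 n) np \rfloor$, using property (B1) of \Cref{typicalbipartite} as the only randomness input. First I condition on $B = B_{n,p}$ being $(1/\log^2 n, p)$-good, which happens whp. Then for an arbitrary $E \subseteq E(B)$ with $|E| \leq \sqrt{np}\log n$, I need to verify that for every $S \subseteq X$ and $T \subseteq Y$,
\[
e_{B\setminus E}(S,T) > r\bigl(|S|+|T|-n\bigr).
\]

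If $|S|+|T| \leq n$, the right-hand side is non-positive and the inequality holds trivially (after disposing of the degenerate case $S=T=\emptyset$, where both sides vanish, via Gale--Ryser's convention). So the real work is the case $a := |S|+|T|-n \geq 1$. The first step is to check that the pair $(|S|,|T|)$ always meets one of the three regimes in \Cref{def:good}: if both are at least $n/\log^{2/3}n$, we are in case (a); if $|S| < n/\log^{2/3}n$, then since $|T| \geq n-|S|$ and $\sqrt{\log n} \geq 1$, we have $|S|\sqrt{\log n}+|T| \geq n$, landing in case (b); and symmetrically for case (c). Hence goodness gives $e_B(S,T) \geq (1 - 1/\log^2 n)|S||T|p$.

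The second step is the algebraic identity $|S||T| - n(|S|+|T|-n) = (n-|S|)(n-|T|) \geq 0$, which yields $|S||T| \geq na$. Combining this with $e_{B\setminus E}(S,T) \geq e_B(S,T) - |E|$ gives
\[
e_{B\setminus E}(S,T) - ra \;\geq\; (1-1/\log^2 n)nap - (1-2/\log^2 n)npa - |E| \;=\; \frac{npa}{\log^2 n} - |E|.
\]
Since $a \geq 1$, $np \geq \log^{10} n$, and $|E| \leq \sqrt{np}\log n$, the gap $np/\log^2 n$ dominates $\sqrt{np}\log n$ (equivalently $\sqrt{np} > \log^3 n$, which follows from $np \geq \log^6 n$), so the inequality is strict. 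Gale--Ryser then delivers the desired $r$-regular subgraph.

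The step I expect to require the most care is the case analysis verifying that whenever $|S|+|T|>n$ some regime of \Cref{def:good} applies; the rest is routine arithmetic. Note also that the whp assumption is made only on $B$ itself, so the same typical event simultaneously handles every choice of $E$, matching the "for all $E$" quantifier in the statement.
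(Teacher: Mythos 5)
Your approach is essentially the paper's: apply Gale--Ryser (\Cref{lem:gale-ryser}) to $B\setminus E$, use $(1/\log^2 n,p)$-goodness from (B1) of \Cref{typicalbipartite} to lower bound $e_B(S,T)$, the identity $|S||T|-n(|S|+|T|-n)=(n-|S|)(n-|T|)\ge 0$, and the fact that $np/\log^2 n \gg \sqrt{np}\log n$ to absorb the deleted edges. Your explicit check that every pair with $|S|+|T|\ge n$ lands in one of the regimes (a)--(c) of \Cref{def:good} is a point the paper leaves implicit, and is correct.

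The one flaw is the boundary case $|S|+|T|=n$. You fold it into ``right-hand side non-positive, hence trivial,'' but Gale--Ryser demands the \emph{strict} inequality $e_{B\setminus E}(S,T)>r(|S|+|T|-n)$, and at $|S|+|T|=n$ the right-hand side equals $0$, so you must show $B\setminus E$ still has at least one edge between $S$ and $T$ (e.g.\ $S=\{x\}$, $T=Y\setminus\{y\}$: the vertex $x$ must retain a neighbour in $T$ after deleting $E$). That is not automatic; it follows from the same goodness estimate you already use, since for nonempty $S,T$ with $|S|+|T|=n$ one has $|S||T|\ge n-1$, hence $e_{B\setminus E}(S,T)\ge \left(1-\frac{1}{\log^2 n}\right)(n-1)p-\sqrt{np}\log n>0$. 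Relatedly, your parenthetical about $S=T=\emptyset$ is off: there the right-hand side is $-rn$, not $0$; the genuinely degenerate pairs for strictness are $S=\emptyset,\,T=Y$ (and symmetrically $T=\emptyset,\,S=X$), which are excluded by the nonempty-sets convention, exactly as the paper does. This is why the paper runs its main estimate for all $|S|+|T|\ge n$ rather than only for $|S|+|T|>n$; with that adjustment your argument is complete.
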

\begin{proof}
Let $B:=B_{n,p}$ with parts $X$ and $Y$, and let $r:=\left(1-\frac{2}{\log^2 n}\right)np$. In order to prove the theorem we wish to apply \Cref{lem:gale-ryser}. To this end, it is enough to show that for every non-empty sets $S\subseteq X$ and $T \subseteq Y$, 
\begin{align}\label{eq:galeryser}
    \max\{e(S,T)-|E|,0\}>r(|S|+|T|-n).
\end{align}
If $|S|+|T|< n$, then $r(|S|+|T|-n)< 0$ and consequently inequality (\ref{eq:galeryser}) holds. Thus, we may assume that $|S|+|T| \geq n$. 
Now, since $|S|+|T|\geq n$ and \cref{typicalbipartite} tells us that B is $\left(\frac{1}{\log^2 n}, p\right)$-good, 
$$e(S,T) \in \left(1\pm \frac{1}{\log^2 n}\right)|S||T|p.$$ 

Next, observe that  
$$|S||T|\geq  n(|S|+|T|-n).$$ 

Indeed, the above is equivalent to 
$$n^2\geq |S|(n-|T|)+n|T|$$
which is true because $|S|\leq n$.

Moreover, since $S$ and $T$ are non empty, and since at least one of them is large, we have $|E|=o(\frac{p|S||T|}{\log^2 n})$. All in all, we have
\begin{align*}
    e(S,T)-|E|&\geq \left(1-\frac{1}{\log^2 n}\right)|S||T|p-|E|\\
    &> \left(1-\frac{2}{\log^2 n}\right)np(|S|+|T|-n)\\
    &=r(|S|+|T|-n)
\end{align*}
which proves inequality (\ref{eq:galeryser}). This completes the proof.
\end{proof}

 \begin{proof} [Proof of Lemma \ref{cor:covering}.]
Let $B:=B_{n,p}$, and let $B'$ be a subgraph of $B$ obtained by deleting $\Delta(B)-\Delta_2(B)$ edges from the vertex of largest degree. First, we find a family of perfect matchings that ``almost'' cover $B'$. This is done by applying \Cref{thm:matchingcovering1} to obtain $W'$, an $np\left(1-\frac{2}{\log^2 n}\right)$-regular subgraph of $B'$, and then applying \Cref{thm:hallscor} to decompose $W'$ into perfect matchings. We call this set of matchings $\cM_1$. 
Next, let $W$ be the subgraph of $B'$ whose edges are not covered by any matching in $\mathcal M_1$. 
 In order to complete the proof, we need to find a family of perfect matchings, $\mathcal M_2$, of size $\Delta(B')-|\mathcal M_1|$, that covers $W$. 
 We will find $\mathcal M_2$ by finding an $r:=\Delta(B')-|\mathcal M_1|$ regular subgraph of $B'$ that contains all the edges of $W$. We will decompose this into perfect matchings using \cref{thm:hallscor}. To this end, we first show that $W'$, the $np\left(1-\frac{2}{\log^2 n}\right)$-regular graph obtained in \Cref{thm:matchingcovering1}, is $\left(\frac{1}{\log n},p\right)$-good:
 

\begin{claim}\label{lem:stillnice}
    With high probability, for all choices of $B'$, the $np\left(1-\frac{2}{\log^2 n}\right)$-regular graph obtained in \Cref{thm:matchingcovering1} is $\left(\frac{1}{\log n},p\right)$-good. 
\end{claim}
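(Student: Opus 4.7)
The plan is to derive the goodness of $W'$ from that of $B$ itself. By (B1) of \Cref{typicalbipartite}, $B$ is already $(1/\log^2 n, p)$-good, which is a factor of $\log n$ stronger than the $(1/\log n, p)$-goodness we want for $W'$. That extra slack is exactly what we have available to absorb the edges of $B \setminus W'$.

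The first main step is to prove $\Delta(B \setminus W') \leq 3np/\log^2 n$. Writing $B \setminus W' = (B \setminus B') \cup (B' \setminus W')$, the edges of the first part are all incident to the max-degree vertex $v^\star$ of $B$, so they contribute at most $1$ to any other vertex's degree, while the $np(1 - 2/\log^2 n)$-regularity of $W'$ gives $\deg_{B' \setminus W'}(v) = \deg_{B'}(v) - np(1 - 2/\log^2 n)$ for every $v$. Combined, this yields
\[ \deg_{B \setminus W'}(v) \leq \deg_B(v) - np(1 - 2/\log^2 n) + 1. \]
Using the standard whp estimate $\Delta(B) = np + O(\sqrt{np\log n})$ (a Chernoff and union bound argument, consistent with and sharpening (B2)) together with $\sqrt{np\log n} = o(np/\log^2 n)$ in the range $p \geq \log^{10} n/n$, the claimed degree bound follows.

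Given this, for any $S \subseteq X$ and $T \subseteq Y$ satisfying one of the clauses of \Cref{def:good}, the trivial estimate $e_{B \setminus W'}(S,T) \leq \min(|S|,|T|) \cdot \Delta(B \setminus W') \leq 3np\min(|S|,|T|)/\log^2 n$, combined with $e_B(S,T) \in (1 \pm 1/\log^2 n)|S||T|p$ from (B1), gives
\[ e_{W'}(S,T) \in \left[(1 - 1/\log^2 n)|S||T|p - \tfrac{3np \min(|S|,|T|)}{\log^2 n},\ (1 + 1/\log^2 n)|S||T|p\right]. \]
The upper bound already lies within $(1 + 1/\log n)|S||T|p$. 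A short manipulation shows that the lower bound delivers $(1 - 1/\log n)|S||T|p$ provided $\max(|S|,|T|) \geq 4n/\log n$. I would then verify this last inequality in each of the three cases of \Cref{def:good}: in case (a), $\max(|S|,|T|) \geq n/\log^{2/3} n$, which dominates $4n/\log n$ for large $n$; in case (b), $|S| \leq n/\log^{2/3} n$ together with $|S|\sqrt{\log n} + |T| \geq n$ forces $|T| \geq n(1 - 1/\log^{1/6} n) \geq n/2$; case (c) is symmetric.

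The main technical point is the sharp degree bound $\Delta(B \setminus W') = O(np/\log^2 n)$ in the first step: only a budget of this order allows the lower bound to survive in case (a), where the ``target'' quantity $|S||T|p$ can be as small as $\Theta(n^2 p/\log^{4/3} n)$. This is precisely the role of the hypothesis $p \geq \log^{10} n/n$, which provides enough concentration for the fluctuations of $\Delta(B)$ around $np$ to fit inside a $3np/\log^2 n$ budget.
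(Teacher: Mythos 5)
Your proposal is correct and follows essentially the same route as the paper: both arguments use the $\left(\frac{1}{\log^2 n},p\right)$-goodness of $B$ together with the observation that each vertex loses at most about $\frac{3np}{\log^2 n}$ edges in passing from $B$ to the regular subgraph $W'$, and then absorb the at most $\min(s,t)\cdot\frac{3np}{\log^2 n}$ deleted edges between $S$ and $T$ inside the relaxed error term $\frac{stp}{\log n}$ because one of the two sets is large in every case of \Cref{def:good}. The only cosmetic differences are that you bound the deletions through the smaller side and verify $\max(s,t)\geq 4n/\log n$ case by case, while the paper takes the larger side of size at least $n/\log^{2/3}n$ without loss of generality.
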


\begin{proof} 

\cref{typicalbipartite} implies that $B$ is $\left(\frac{1}{\log^2 n},p\right)$-good. It is sufficient to show that when $S$ and $T$ satisfy the size requirements of (a), (b), or (c) in \Cref{def:good}, at most $\frac{stp}{2\log n}$ edges between $S$ and $T$ are in $B$ but not in $W'$. If any of the size conditions of (a), (b) or (c) hold, then either $s \geq \frac{n}{\log^{2/3} n}$ or $t \geq \frac{n}{\log^{2/3} n}$. Without loss of generality, suppose $s \geq \frac{n}{\log^{2/3}n}$.
Since $\Delta(B)\leq np+\frac{np}{\log^2n}$, and since $W'$ is $(1-\frac{2}{\log^2n})np$-regular, it follows that at most $\frac{3np}{\log^2 n}$ edges were deleted from each vertex in $T$. In total, the number of deleted edges between $S$ and $T$ is at most $3t\frac{np}{\log^2 n}<\frac{stp}{2\log n}$.
\end{proof}

        
       We are now ready to complete the proof of \cref{cor:covering} by finding an $r$-regular subgraph of $B'$ that covers $W$, the graph of edges not covered by $\mathcal M_1$. By \cref{lem:stillnice}, $W'$ is $\left(\frac{1}{\log n},p\right)$-good, so we will apply \Cref{lemma:f-factor} to we find an $f$-factor of $W'$, where $f(v)=\Delta(W)-\deg_{W}(v)$. Note that $\max_v\{f(v)\}<\frac{3np}{\log^2{n}}< \frac{np}{2\log^{4/3}n}$, and $\min_v\{f(v)\}> \frac{np}{2\log^2{n}}$, so we may apply \Cref{lemma:f-factor} with $M=\frac{3np}{\log^2{n}}$. Call this $f$-factor $H$.
       The critical point is that the graph with the edge set $W \cup E(H)$ is bipartite and $\Delta(W)$-regular, and therefore the edges can be partitioned into $\cM_2$, a set of $\Delta(W)$ perfect matchings covering $W$.
    \end{proof}

\section{Finding an almost covering of $D_{n,p}$ by linear forests}\label{sec:coverforest}

In this section our goal is to prove that the random digraph $D_{n,p}$ can be typically decomposed into a family of \emph{linear forests} and a ``sparse'' graph. A digraph $F$ is said to be a \emph{linear forest} if $F$ can be partitioned into disjoint directed paths $F=\bigcup_{i=1}^\ell P_i$, where we also consider isolated vertices as paths. We say that $F$ has $\ell$ connected components if $F$ is the union of $\ell$ disjoint directed paths. For a digraph $D$ on $n$ vertices, let $\Delta_1(D)\geq \Delta_2(D)\geq \ldots \geq \Delta_{2n}(D)$ be its ordered degree sequence of in-degrees and out-degrees. 
For simplicity, from now on, we will assume that the maximum degree vertex of the directed graph $D_{n,p}$ is an out-degree. The main theorem will still hold if the maximum degree vertex is an in-degree, since by flipping the orientation of each edge, we obtain a graph whose maximum degree vertex is an out-degree, and any Hamilton cycle on the ``flipped" graph is a Hamilton cycle on the original graph. The main result of this section can be precisely stated as follows.
\begin{lemma}\label{lem:almostforest}
Let $D:=D_{n,p}$ with $1/2 \geq p\geq \frac{\log^{10} n}{n}$. Then, with high probability the following holds: Let $x_{\star} \in V(D)$ be a vertex of largest degree (so in particular we have $\Delta_1(D)=\deg_D^+(x_\star)$).
Then $D$ can be covered by a family $\cF=\{F_1,\ldots,F_t\}$ of linear forests for $\Delta_2(D)-3\leq t\leq \Delta_2(D)$, a digraph $R$ and a set of edges $E=D\setminus \left(R\cup\left(\bigcup_{i=1}^t F_t\right)\right)$ satisfying
\begin{enumerate}
    \item[(i)] For every $1\leq i \leq t$, there exists vertex $y_i\in V(D)$ such that $\orarrow{x_\star y_i}\in F_i$ 
    \item[(ii)] For every $1\leq i \leq t$, the forest $F_i$ has at most $4\log n$ components.
    \item[(iii)] The edge set $E$ consists of $\Delta_1(D)-t$ out-edges from $x_{\star}$ 
    \item[(iv)] The digraph $R$ satisfies $\Delta(R)= O\left((np\log^4 n)^{1/3}\right)$.
\end{enumerate}
\end{lemma}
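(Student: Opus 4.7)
The plan is to work in the bipartite model introduced in \Cref{sec:preliminaries}: realize $D := D_{n,p}$ as $D_\pi(B)$, where $B \sim B_{n,p}$ and $\pi$ is a uniformly random permutation of $[n]$. In this model the maximum-degree vertex $x_\star$ corresponds to a vertex in the part $X$ of $B$ with $\deg_B(x_\star) = \Delta_1(D)$, and every perfect matching $M$ of $B$ translates, via $\pi$, into a $1$-regular spanning subdigraph $C_M$ of $D_\pi(B)$, i.e., a disjoint union of directed cycles. I first set aside a collection $E$ of $\Delta_1(D) - \Delta_2(D)$ out-edges of $x_\star$ (corresponding to deleting the same number of edges at $x_\star$ in $B$) to obtain a subgraph $B' \subseteq B$ with $\Delta(B') = \Delta_2(D)$; this is precisely the setup of \Cref{cor:covering} and will immediately yield property~(iii).

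Applying \Cref{cor:covering} produces two families of edge-disjoint perfect matchings $\mathcal{M}_1 \cup \mathcal{M}_2$ of $B'$, of total size $\Delta(B') = \Delta_2(D)$, whose union covers $E(B')$. For each matching $M$ in this family, let $y_M$ denote the out-neighbor of $x_\star$ in $C_M$, and form the linear forest $F_M$ by deleting exactly one edge from each directed cycle of $C_M$, taking care to keep the edge $\orarrow{x_\star y_M}$ in the cycle through $x_\star$; the deleted edges are dumped into the residual graph $R$. This gives a covering $D = E \cup R \cup \bigcup_M F_M$ that realizes properties~(i) and~(iii). The mild slack $t \in [\Delta_2(D) - 3, \Delta_2(D)]$ is there to absorb a bounded number of pathological matchings (e.g., those in which $x_\star$ lies on a $2$-cycle of $C_M$, in which case $\orarrow{x_\star y_M}$ cannot be preserved): for these I reroute the $x_\star$-edge into $E$ and push the remaining edges of the matching into $R$, which happens at most three times with high probability because the expected number of such matchings is $O(1)$.

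Next I prove property~(ii), that each $F_M$ has at most $4\log n$ components. Condition on $B$ and on $\mathcal{M}_1 \cup \mathcal{M}_2$; for each $M$, the digraph $C_M$ is determined by the composition of the bijection prescribed by $M$ with $\pi$, which is a uniformly random permutation of $[n]$ because $\pi$ is. Standard concentration for the number of cycles of a uniformly random permutation (via the decomposition into independent Bernoulli indicators and Chernoff's bound, \Cref{chernoff}) shows that $C_M$ has more than $4\log n$ cycles with probability at most $n^{-c}$ for some constant $c > 1$, which is more than enough to union bound over the $\Delta_2(D) = O(np)$ matchings in the family.

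The main obstacle is property~(iv), the degree bound $\Delta(R) = O((np\log^4 n)^{1/3})$. Fix $v \in V(D)$ and let $X_v$ count the edges of $R$ incident to $v$. If, for each $M$, the break edge is chosen uniformly at random within each cycle of $C_M$ (subject to the $x_\star$-constraint), then the probability that $v$ is hit in $C_M$ is $\Theta(1/\ell_M(v))$, where $\ell_M(v)$ is the length of the cycle of $C_M$ through $v$; using the uniformity of $\pi$, each $\ell_M(v)$ is nearly uniform on $\{1, \ldots, n\}$, so $\Exp[X_v] = O(p\log n)$. The difficulty is that the cycle structures across different matchings share the single random permutation $\pi$ and are non-trivially correlated, so first- and second-moment bounds are not sharp enough. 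To overcome this, the plan is to carry out a third-moment computation: expand $\Exp[X_v^3]$ as a sum over ordered triples of matchings, and exploit the uniformity of $\pi$ together with the edge-disjointness of $\mathcal{M}_1 \cup \mathcal{M}_2$ to control the cross-matching correlations of cycle lengths, obtaining $\Exp[X_v^3] = O(np\log^4 n)$. Markov's inequality applied to $X_v^3$ then yields $\Prob[X_v > C(np\log^4 n)^{1/3}] = o(1/n)$, and a union bound over $v$ finishes the proof.
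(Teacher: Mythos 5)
Your plan follows the paper's route (bipartite model, \Cref{cor:covering}, breaking one random edge per cycle, slack in $t$ for bad matchings, a third-moment bound for the residual degrees), but the quantitative core of property~(iv) does not work as you state it. You claim $\Exp[X_v^3]=O(np\log^4 n)$ and that Markov then gives $\Prob\left[X_v>C(np\log^4 n)^{1/3}\right]=o(1/n)$. This is false arithmetic: Markov applied to $X_v^3$ at the threshold $C^3np\log^4 n$ with a third moment of order $np\log^4 n$ only gives a bound $O(C^{-3})$, a constant, and the union bound over the $n$ vertices collapses. What is actually needed --- and what the paper proves in \Cref{prop:ugly} and \Cref{lem:probcycles} --- is the much stronger estimate $\Exp\bigl[\bigl(\sum_i 1/c_{i,v}\bigr)^3\bigr]=O\bigl(\tfrac{r}{n}\log^3 n\bigr)=O(p\log^3 n)$, after which Markov against the cube of the threshold $r\log^4 n$ gives $O\bigl(\tfrac{1}{n\log n}\bigr)=o(1/n)$ per vertex. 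Moreover, this third-moment estimate is the real technical work (it occupies the paper's appendix), and the lever you invoke for it, the ``edge-disjointness of $\mathcal{M}_1\cup\mathcal{M}_2$,'' is not available: \Cref{cor:covering} only guarantees that each family is internally edge-disjoint, and matchings of $\mathcal{M}_2$ may reuse edges of the regular subgraph underlying $\mathcal{M}_1$. The paper therefore applies the cycle-length lemma to $\mathcal{M}_1$ and $\mathcal{M}_2$ separately and adds the two bounds $O\bigl((t_j\log^4 n)^{1/3}\bigr)$.

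Your treatment of the discarded matchings is also off. A $2$-cycle through $x_\star$ is harmless: delete the edge entering $x_\star$ and keep $\orarrow{x_\star y_M}$. The genuine obstruction is a matching $M$ whose edge at $x_\star$ satisfies $\pi(M(x_\star))=x_\star$, so that it becomes a forbidden loop and $x_\star$ is isolated in $D_\pi(M)$; then there is no out-edge of $x_\star$ available for property~(i) at all. Your bound ``expected number $O(1)$, hence at most three with high probability'' is not valid reasoning --- a random variable with constant mean need not be at most $3$ whp. The correct argument is deterministic: within each family the matchings are pairwise edge-disjoint, so at most one matching per family can contain the edge $x_\star\pi^{-1}(x_\star)$, giving at most two discarded matchings; the remaining unit of slack in $\Delta_2(D)-3\le t$ accounts for the fact that $\Delta_2(B)$ and $\Delta_2(D)$ may differ by one because loops are excluded in $D_\pi(B)$. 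Your argument for property~(ii) via the Bernoulli decomposition of the cycle count of a uniform permutation is fine and essentially equivalent to the paper's use of $\Exp\bigl(2^{C(\pi)}\bigr)=n+1$. With the corrected third-moment bound (run separately per family) and the deterministic handling of the bad matchings, your outline coincides with the paper's proof.
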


To prove \Cref{lem:almostforest} we are going to consider a different random model that has the same distribution as $D_{n,p}$. In this model we construct a random digraph using a random permutation and a random bipartite graph as described below. Recall that in a bipartite graph, the edge $ij$ refers to $i$ in part $X$ and $j$ in part $Y$, so $ij \neq ji$. 

\begin{definition}\label{def:model}
Given a balanced bipartite graph $B=(X\cup Y,E)$ with $|X|=n$, and a permutation $\pi \in S_n$, we define the digraph $D_{\pi}(B)$ as the digraph with vertex set $[n]$ and edge set given by
\begin{align*}
    E(D_{\pi}(B))=\{\orarrow{ij}\in [n]^2:\: i\pi^{-1}(j)\in B \text{ and } i\neq j \}.
\end{align*}
\end{definition}

In other words, the digraph $D_{\pi}(B)$ is the loopless digraph obtained by shuffling the vertices of the part $Y$ by the permutation $\pi$ and associating every edge of the new relabeled bipartite graph with a directed edge. The first observation is that the random model for directed graph $D_{\pi}(B_{n,p})$ obtained by choosing a random bipartite graph in $B_{n,p}$ and then independently choosing permutation uniformly at random in $S_n$ has same distribution as $D_{n,p}$.

\begin{fact}\label{fact:model}
Let $\pi \in S_n$ be a permutation chosen uniformly at random. Then $D_{\pi}(B_{n,p})$ has the same probability distribution as $D_{n,p}$.
\end{fact}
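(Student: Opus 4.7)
The plan is to show that both $D_{n,p}$ and $D_\pi(B_{n,p})$ are the product Bernoulli$(p)$ measure on the coordinates indexed by ordered pairs $(i,j) \in [n]^2$ with $i \neq j$; once both are identified with this measure, the equality of distributions is immediate. I would approach this by conditioning on $\pi$ and tracking the construction edge-by-edge.

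First, I would fix an arbitrary $\pi \in S_n$ and verify that the map $\Phi_\pi : \{(i,j) \in [n]^2 : i \neq j\} \to E(K_{n,n})$ defined by $\Phi_\pi(i,j) = i\,\pi^{-1}(j)$ is injective: if $\Phi_\pi(i_1,j_1) = \Phi_\pi(i_2,j_2)$, then $i_1 = i_2$ and $\pi^{-1}(j_1) = \pi^{-1}(j_2)$, from which $j_1 = j_2$ by bijectivity of $\pi$. As a sanity check, the image of $\Phi_\pi$ consists of those bipartite edges $ik$ with $\pi(k) \neq i$, accounting for exactly $n(n-1)$ of the $n^2$ possible edges of $K_{n,n}$, matching the cardinality of the domain.

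Second, I would use this injectivity to conclude that the $n(n-1)$ indicators $\{\mathbf{1}[i\,\pi^{-1}(j) \in B_{n,p}] : i \neq j\}$ are jointly independent Bernoulli$(p)$ random variables, since they simply read off distinct coordinates of the product measure defining $B_{n,p}$. By \Cref{def:model}, the edge $\orarrow{ij}$ lies in $D_\pi(B_{n,p})$ precisely when the corresponding indicator equals $1$, so, conditional on $\pi$, the law of $D_\pi(B_{n,p})$ is exactly that of $D_{n,p}$.

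Finally, since this conditional distribution does not depend on the realization of $\pi$, and $\pi$ is drawn independently of $B_{n,p}$, the unconditional distribution of $D_\pi(B_{n,p})$ coincides with $D_{n,p}$. I do not anticipate any real obstacle here, as the argument reduces to a one-step bookkeeping computation; the only mild subtlety worth flagging is that the randomness of $\pi$ is in fact irrelevant for this particular fact, since \emph{any} fixed $\pi$ already produces the correct marginal. The independence between $\pi$ and $B_{n,p}$ becomes essential only later in the paper, where it powers the concentration arguments used to control the structure of the resulting digraph.
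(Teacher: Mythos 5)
Your argument is correct and is exactly the fleshed-out version of the observation the paper treats as immediate (it states this fact without proof): for any fixed $\pi$ the map $(i,j)\mapsto i\,\pi^{-1}(j)$ injects the ordered pairs with $i\neq j$ into distinct coordinates of the product Bernoulli$(p)$ measure defining $B_{n,p}$, so the conditional law of $D_\pi(B_{n,p})$ given $\pi$ is that of $D_{n,p}$, and averaging over $\pi$ changes nothing. Your closing remark that the randomness of $\pi$ is not needed for this fact, but is what drives the later concentration arguments, is also accurate.
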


The main observation is that for a matching $M$, the digraph $D_{\pi}(M)$ is a $1$-factor, i.e. a disjoint union of directed cycles and isolated vertices. By removing one edge from each directed cycle, one can obtain a linear forest. This makes a natural connection between families of matchings in $B$ and families of linear forests in $D_{\pi}(B)$. Before we prove \Cref{lem:almostforest} we will state some facts about cycles in random permutations. Given a permutation $\pi \in S_n$, let $C(\pi)$ be the number of cycles in $\pi$. The next result allows us to show that typically a random permutation does not have too many cycles.

\begin{theorem}[\cite{ford2022cycle}, Theorem 1.24]\label{thm:expectedcycles}
Let $\pi\in S_n$ be a permutation chosen uniformly at random, then $\EE(2^{C(\pi)})=n+1$.
\end{theorem}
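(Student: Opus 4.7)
The plan is to reformulate the identity $\EE[2^{C(\pi)}]=n+1$ as the combinatorial statement $\sum_{\pi\in S_n}2^{C(\pi)}=(n+1)!$ and then establish the latter by a direct bijective count.

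First I would interpret $2^{C(\pi)}$ as the number of subsets $S$ of the cycle set of $\pi$, so that $\sum_{\pi\in S_n}2^{C(\pi)}$ equals the number of pairs $(\pi,S)$ with $\pi\in S_n$ and $S\subseteq\{\text{cycles of }\pi\}$. The key step is a natural bijection between such pairs and triples $(A,\sigma,\tau)$ with $A\subseteq [n]$, $\sigma$ a permutation of $A$, and $\tau$ a permutation of $[n]\setminus A$. Given $(\pi,S)$, let $A$ be the union of elements appearing in the cycles belonging to $S$, let $\sigma=\pi|_A$ (well-defined as a permutation of $A$ because $A$ is a union of full cycles of $\pi$), and let $\tau=\pi|_{[n]\setminus A}$. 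Conversely, given $(A,\sigma,\tau)$ one reconstructs $\pi$ by gluing $\sigma$ and $\tau$, and recovers $S$ as the set of cycles of $\pi$ supported on $A$. The verification that these two maps are mutually inverse is immediate.

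Having set up the bijection I would finish by counting triples. Splitting over the size $k=|A|$ gives
\begin{align*}
\sum_{\pi\in S_n}2^{C(\pi)}=\sum_{k=0}^{n}\binom{n}{k}k!(n-k)!=\sum_{k=0}^{n}n!=(n+1)\cdot n!,
\end{align*}
and dividing by $|S_n|=n!$ yields $\EE[2^{C(\pi)}]=n+1$.

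As an equally short alternative I would set up a one-line induction: writing $E_n:=\sum_{\pi\in S_n}2^{C(\pi)}$, any $\pi\in S_n$ is built from a unique $\pi'\in S_{n-1}$ either by inserting $n$ as a new fixed point (a brand new cycle, contributing a factor $2$ to $2^{C}$) or by inserting $n$ immediately after one of the $n-1$ elements of $[n-1]$ inside its existing cycle (leaving $C$ unchanged). This gives $E_n=(2+(n-1))E_{n-1}=(n+1)E_{n-1}$, and with $E_1=2$ one obtains $E_n=(n+1)!$. There is no serious obstacle: the only thing to be careful about in either approach is the bookkeeping in the bijection (checking that $\pi|_A$ really is a permutation of $A$) or the insertion rule in the induction; both are straightforward.
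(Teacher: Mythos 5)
Your proof is correct. Note that the paper does not prove this statement at all: it is quoted verbatim as Theorem 1.24 of the cited reference \cite{ford2022cycle}, so there is no in-paper argument to compare against, and a self-contained verification like yours is perfectly adequate (and arguably a nice addition, since the fact is elementary). Both of your routes are sound: the double count of pairs $(\pi,S)$ via the splitting $[n]=A\cup([n]\setminus A)$ correctly gives $\sum_{\pi\in S_n}2^{C(\pi)}=\sum_{k=0}^n\binom{n}{k}k!\,(n-k)!=(n+1)!$, and the insertion recursion $E_n=(n+1)E_{n-1}$ with $E_1=2$ is the standard cycle-insertion argument. Two equally short alternatives worth knowing: the unsigned Stirling number identity $\sum_{\pi\in S_n}x^{C(\pi)}=x(x+1)\cdots(x+n-1)$ evaluated at $x=2$, which your induction essentially reproves; and the Feller coupling, which writes $C(\pi)=\sum_{i=1}^n B_i$ with independent Bernoulli$(1/i)$ indicators, so that $\EE(2^{C(\pi)})=\prod_{i=1}^n\left(1+\tfrac{1}{i}\right)=n+1$. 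All of these yield exactly the statement the paper imports.
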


Since each directed cycle in $D_{\pi}(M)$ corresponds to a cycle in the permutation $\pi$, we can use the last theorem to bound the number of connected components in $D_{\pi}(M)$.

\begin{corollary}\label{cor:comp}
Let $M$ be a perfect matching and $\pi \in S_n$ be a permutation chosen uniformly at random. Then with probabiliy $1-1/n^3$, the $1$-factor $D_{\pi}(M)$ has at most $4\log n$ components.
\end{corollary}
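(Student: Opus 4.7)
The plan is to reduce the question about components of $D_\pi(M)$ to the number of cycles of a uniformly random permutation and then apply Markov's inequality to Theorem \ref{thm:expectedcycles}. Since $M$ is a perfect matching between $X$ and $Y$ (both copies of $[n]$), it determines a bijection $\sigma \in S_n$ defined by $\sigma(i) = j$ whenever $ij \in M$. Unpacking \Cref{def:model}, the edge $\orarrow{ij}$ lies in $D_\pi(M)$ exactly when $i\pi^{-1}(j) \in M$ and $i \neq j$, i.e., when $\pi^{-1}(j) = \sigma(i)$, or equivalently $j = (\pi \circ \sigma)(i)$, subject to $i \neq j$. In particular, every vertex has exactly one out-edge unless it is a fixed point of $\pi \circ \sigma$, so $D_\pi(M)$ is the functional digraph of $\pi \circ \sigma$ with loops deleted.

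The next step is to observe that the connected components of $D_\pi(M)$ are in bijection with the cycles of $\pi \circ \sigma$: cycles of length $\geq 2$ in $\pi \circ \sigma$ become directed cycles of $D_\pi(M)$, while fixed points become isolated vertices (which are also counted as components). So the number of components of $D_\pi(M)$ is exactly $C(\pi \circ \sigma)$. Since $\sigma$ is a fixed element of $S_n$ and $\pi$ is uniform on $S_n$, the composition $\pi \circ \sigma$ is also uniform on $S_n$; consequently, $C(\pi \circ \sigma)$ has the same distribution as $C(\pi)$ for a uniformly random $\pi$.

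Finally, I would invoke \Cref{thm:expectedcycles} to get $\mathbb{E}\bigl(2^{C(\pi \circ \sigma)}\bigr) = n+1$, and apply Markov's inequality to the nonnegative random variable $2^{C(\pi \circ \sigma)}$:
\[
\Pr\bigl(C(\pi \circ \sigma) \geq 4\log n\bigr) \;=\; \Pr\bigl(2^{C(\pi \circ \sigma)} \geq 2^{4\log n}\bigr) \;\leq\; \frac{n+1}{2^{4\log n}} \;\leq\; \frac{1}{n^{3}}
\]
for $n$ sufficiently large (with the $\log$ interpreted so that $2^{4\log n} \geq n^4$, matching the convention of the paper). This gives the claimed tail bound.

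There are no real obstacles: the only thing to verify carefully is the bijection between cycles of $\pi \circ \sigma$ and components of $D_\pi(M)$, including the treatment of loops as isolated vertices, which ensures the count is literally $C(\pi \circ \sigma)$ rather than some shifted quantity. Once that correspondence is set up, \Cref{thm:expectedcycles} plus Markov finishes the argument immediately.
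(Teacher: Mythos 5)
Your proposal is correct and follows essentially the same route as the paper: encode the matching as a permutation, note that the components of $D_\pi(M)$ are exactly the cycles of the (uniformly distributed) composed permutation, and apply Markov's inequality to $2^{C}$ via \Cref{thm:expectedcycles}. Your extra care with loops/isolated vertices is a nice touch but does not change the argument.
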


\begin{proof}
Let $m \in S_n$ be the permutation where $m(i)=j$ when $ij \in M$. The number of components in $D_\pi(M)$ is equal to the number of cycles in the permutation $\pi(m)$. Since $\pi$ is chosen uniformly at random, regardless of $m$, $\pi(m)$ is distributed uniformly. We may now apply \cref{thm:expectedcycles} to obtain $\EE(2^{C(\pi(m))})=n+1$. By Markov's inequality,

$$\Prob\left( 2^{C\left(\pi(m)\right)} \geq (n+1)^4 \right) \leq \frac{1}{(n+1)^3},$$
implying:
$$\Prob\left( C\left(\pi(m)\right) \geq 4\log(n+1) \right) \leq \frac{1}{(n+1)^3}.$$
\end{proof}

As mentioned before, our strategy to obtain linear forests will be to delete some edges of the $1$-factors obtained in $D_{\pi}(B)$. The edges are going to be deleted at random satisfying certain properties. Using the following result from \cite{fhm} we can bound the degrees of the graph obtained by removing these edges.

\begin{lemma}\label{lem:probcycles}
Let $\cM=\{M_1,\ldots,M_r\}$ be a family of $r$ disjoint matchings. For each $w \in [n]$ and $1\leq i \leq r$, let $c_{i,w}$ be the length of the cycle containing $w$ in $D_{\pi}(M_i)$. If $w$ is isolated in $D_{\pi(M_i)}$, define $c_{w,i}=1$. Then whp, for every vertex $w \in [n]$ it holds that
\begin{align*}
    \sum_{i=1}^r\frac{1}{c_{i,w}}=O\left((r\log^4 n)^{1/3}\right)
\end{align*}
\end{lemma}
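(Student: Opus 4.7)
The plan is to apply a third-moment argument to control $X_w := \sum_{i=1}^{r} 1/c_{i,w}$ for each $w \in [n]$. Specifically, I will show $\EE[X_w^3] = O(r \log^3 n / n)$, apply Markov's inequality to obtain
\[
\PP\!\left[X_w \geq C (r \log^4 n)^{1/3}\right] = O\!\left(\tfrac{1}{C^3 n \log n}\right)
\]
for a sufficiently large constant $C$, and then union-bound over $w \in [n]$. In the intended setting the matchings are pairwise disjoint in a bipartite graph on $n+n$ vertices, so $r \leq n$, which is what makes the moment estimates below useful.

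The single-matching moments are the starting input. Because $\pi \in S_n$ is uniform and each $M_i$ is (in the main case) a perfect matching, the induced permutation $\sigma_i$ on $[n]$ defined by $\sigma_i(x) = \pi(y)$ whenever $xy \in M_i$ is itself uniform in $S_n$. A classical computation shows that in a uniform permutation the cycle length through any fixed vertex is uniformly distributed on $\{1, \dots, n\}$, so
\[
\EE\!\left[1/c_{i,w}\right] = \frac{H_n}{n} = O\!\left(\tfrac{\log n}{n}\right), \qquad \EE\!\left[1/c_{i,w}^{\,s}\right] = O(1/n) \text{ for } s \geq 2.
\]

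Expanding $\EE[X_w^3] = \sum_{i,j,k \in [r]} \EE[1/(c_{i,w}c_{j,w}c_{k,w})]$, I split by the multiplicities among $\{i,j,k\}$: the diagonal $i=j=k$ contributes $O(r/n)$; the two-distinct terms contribute $O(r^2 \log n /n^2)$; and the three-distinct terms contribute $O(r^3 \log^3 n/n^3)$. Using $r \leq n$, each of these three contributions is bounded by $O(r \log^3 n / n)$, which gives the target third-moment bound and hence the desired tail estimate via Markov.

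The technical core, and the principal obstacle in executing this plan, is proving the mixed-moment estimates: for $i \neq j$ one needs $\EE[1/(c_{i,w}c_{j,w})] = O(\log^2 n / n^2)$, and analogously for three distinct indices. Although $\sigma_i$ and $\sigma_j$ share the common randomness $\pi$, the disjointness of $M_i$ and $M_j$ forces $\sigma_i \sigma_j^{-1}$ to be fixed-point-free, which should supply enough decoupling. The idea is to condition on the cycle of $\sigma_i$ through $w$---which pins down only the $c_{i,w}$ values $\pi(m_i(\sigma_i^k(w)))$ for $k = 0, \dots, c_{i,w}-1$---and then show that the remaining freedom in $\pi$ keeps the cycle of $\sigma_j$ through $w$ distributed close to uniform on $\{1,\dots,n\}$, so the two cycle lengths almost factor; iterating for a third matching yields the three-distinct estimate. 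Making this decoupling rigorous, by an explicit enumeration of joint cycle structures compatible with the fixed-point-free relations between the $\sigma_i$, is the delicate step of the proof.
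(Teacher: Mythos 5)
Your proposal follows essentially the same route as the paper: the paper proves this lemma exactly by a third-moment bound $\EE\bigl[\bigl(\sum_i 1/c_{i,w}\bigr)^3\bigr]=O(r\log^3 n/n)$ (Proposition \ref{prop:ugly}), then Markov's inequality and a union bound over the $n$ vertices. The mixed-moment estimates you identify as the delicate step are precisely what the paper establishes in the appendix (via Lemma \ref{lem:highmoments} and the cycle-exposure/conditioning argument), in the same spirit as your sketched decoupling, so your outline is correct and matches the paper's structure.
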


The proof of \Cref{lem:probcycles} is based on the following technical claim that we postpone to the appendix (the proof is taken from \cite{fhm} but since this paper has not been submitted yet, we include its proof for the convenience of the reader).

\begin{proposition}\label{prop:ugly}
For every $w \in [n]$, it holds that
\begin{align*}
    \EE\left(\left(\sum_{i=1}^r\frac{1}{c_{i,w}}\right)^3\right)=O\left(\frac{r}{n}\log^3 n\right).
\end{align*}
\end{proposition}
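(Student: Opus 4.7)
The plan is to expand the cube
\begin{align*}
\left(\sum_{i=1}^r \frac{1}{c_{i,w}}\right)^3 = \sum_{i=1}^r \frac{1}{c_{i,w}^3} + 3\sum_{i \neq j} \frac{1}{c_{i,w}^2 c_{j,w}} + 6 \sum_{i<j<k} \frac{1}{c_{i,w} c_{j,w} c_{k,w}},
\end{align*}
take expectations term by term, and bound each resulting sum. The first sum is immediate: for a perfect matching $M_i$ with associated bijection $m_i\colon[n]\to[n]$, the composition $\sigma_i := \pi\circ m_i$ is uniform on $S_n$ (since $\pi$ is uniform and $m_i$ is fixed), so by the classical fact that the length of the cycle containing a fixed vertex in a uniform random permutation is uniform on $\{1,\ldots,n\}$, we get $\PP[c_{i,w}=k]=1/n$ for all $k$. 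Therefore $\EE[c_{i,w}^{-3}] = (1/n)\sum_{k=1}^n k^{-3} = O(1/n)$, and the first sum contributes $O(r/n)$.

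The key step is a joint probability bound: for pairwise distinct indices $i_1,\ldots,i_s$ with $s\in\{2,3\}$ and any $k_1,\ldots,k_s$,
\begin{align*}
\PP[c_{i_1,w}=k_1,\ldots,c_{i_s,w}=k_s] \leq \frac{C_s}{n^s}
\end{align*}
for an absolute constant $C_s$. I would establish this by writing the left-hand side as a sum over $s$-tuples of candidate orbits $O_t=(w=v^{(t)}_0,v^{(t)}_1,\ldots,v^{(t)}_{k_t-1})$ of distinct elements of $[n]$. Each $O_t$ imposes $k_t$ constraints of the form $\pi(m_{i_t}(v^{(t)}_p))=v^{(t)}_{p+1}$ on the uniform $\pi$; after collapsing coincident pairs, the resulting set of $K$ distinct and mutually consistent constraints is satisfied with probability $(n-K)!/n!$. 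The dominant contribution comes from orbit tuples whose pairwise intersection is $\{w\}$: for such tuples the number of configurations is at most $\prod_t P(n-1,k_t-1)\le \prod_t n^{k_t-1}$, while the forced coincidences of the ``return-to-$w$'' constraints across the $s$ orbits yield $K=\sum_t k_t-(s-1)$, which directly gives the $1/n^s$ bound. Orbit tuples with additional shared vertices contribute a strictly smaller amount, because each extra overlap simultaneously saves a factor $n$ in the probability (one fewer distinct constraint) and costs a factor $n$ in the combinatorial count (a previously free orbit position is forced to equal a value prescribed by the consistency equation $v^{(t)}_{q-1} = m_{i_t}^{-1}(m_{i_{t'}}(v^{(t')}_{q'-1}))$). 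The disjointness of the matchings is used here to rule out degenerate coincidences such as $m_{i_t}(v)=m_{i_{t'}}(v)$, which would allow spurious configurations like simultaneous $c_{i_t,w}=c_{i_{t'},w}=1$.

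Given the joint bound, we obtain
\begin{align*}
\EE[c_{i,w}^{-2} c_{j,w}^{-1}] \leq \sum_{k_1,k_2} \frac{1}{k_1^2 k_2}\cdot \frac{C}{n^2} = O\!\left(\frac{\log n}{n^2}\right), \qquad \EE[c_{i,w}^{-1} c_{j,w}^{-1} c_{k,w}^{-1}] \leq \sum_{k_1,k_2,k_3} \frac{C}{k_1k_2k_3\, n^3} = O\!\left(\frac{\log^3 n}{n^3}\right),
\end{align*}
so the three sums in the expansion contribute $O(r/n)$, $O(r^2\log n/n^2)$, and $O(r^3\log^3 n/n^3)$ respectively. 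Since $r\le np\le n/2$ in our setting, each of these is at most $O(r\log^3 n / n)$, proving the proposition. The main obstacle is the cancellation argument in the joint bound when orbits overlap substantially: one must stratify orbit tuples by their overlap pattern $(|V^{(t)}\cap V^{(t')}|)_{t\ne t'}$ and verify, in each stratum, that every extra shared vertex trades a probability-saving factor of $n$ for a counting factor of $n$. The automatic overlap at $w$, which is present because $w$ lies in every orbit, is where the disjointness of the matchings pays off most directly, anchoring the ``initial'' $n^{s-1}$ accounting on which the stratified argument rests.
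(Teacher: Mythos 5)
Your overall architecture is the same as the paper's: expand the third moment, prove a joint bound $\PP\left[c_{i_1,w}=k_1,\ldots,c_{i_s,w}=k_s\right]=O(n^{-s})$ for $s\in\{2,3\}$, and sum harmonic series; the first-moment term and the final assembly are fine. The gap is that the joint bound, which is the entire content of the proposition, is not actually established. Even in your ``dominant'' stratum (orbits meeting only at $w$) the bookkeeping as stated does not close: you count at most $\prod_t n^{k_t-1}=n^{\sum_t k_t-s}$ orbit tuples and assign each a probability about $n^{-K}$ with $K=\sum_t k_t-(s-1)$, and the product of these two quantities is $n^{-1}$, not $n^{-s}$. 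What rescues the bound is that consistency of the collapsed return-to-$w$ constraints (injectivity of $\pi$) forces the predecessor of $w$ in every orbit beyond the first: one must have $v^{(t)}_{k_t-1}=m_{i_t}^{-1}\bigl(m_{i_1}(v^{(1)}_{k_1-1})\bigr)$, so the number of \emph{consistent} tuples is only about $n^{\sum_t k_t-2s+1}$. Without that extra $n^{-(s-1)}$ saved in the count, your stratum estimate is off by exactly the factor $n^{s-1}$ you need.

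Beyond that slip, the overlapping strata are dispatched with the heuristic that each extra shared vertex ``trades a probability-saving factor of $n$ for a counting factor of $n$,'' which at best makes those strata comparable to the main one, not strictly smaller, and is precisely where the real work lies: the paper's appendix devotes most of its length to this, showing that the union of the cycles through $w$ has probability $O(n^{-s})$ of being of bounded size, that heavy overlap has probability $n^{-\omega(1)}$ (via a cherry/double-counting argument), and otherwise running a sequential exposure of $\pi$ along the cycles and conditioning on the order of their closing times. Finally, you assert the joint bound uniformly for all $k_1,\ldots,k_s\le n$, which is stronger than what the paper proves (its joint-length lemma is restricted to lengths at most $n/6$, and long cycles are handled inside the moment computation by the trivial single-cycle bound $\PP[c_{i,w}=a]=1/n$ together with $1/a=O(1/n)$). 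In the regime $\sum_t k_t=\Theta(n)$ the approximations $(n-K)!/n!\approx n^{-K}$ and ``falling factorial $\approx n^{\#\text{free}}$'' each carry superpolynomial corrections that must be played off against one another, so the uniform claim needs its own argument; the cleaner route is simply to split the sums by cycle length as the paper does.
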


The proof of the lemma now follows from a third moment argument.

\begin{proof}[Proof of \Cref{lem:probcycles}]
Given any $w\in [n]$, by Markov's inequality and \cref{prop:ugly}, we have:
$$\Prob\left(\left(\sum_{i=1}^r\frac{1}{c_{i,w}}\right)^3 \geq O\left(r\log^4 n\right)\right) \leq \frac{1}{n\log n}.$$
Therefore, $$\Prob\left(\sum_{i=1}^r\frac{1}{c_{i,w}} \geq O\left((r\log^4 n)^{1/3}\right)\right) \leq \frac{1}{n\log n}.$$
The result now follows by a union bound.
\end{proof}

Before proving Lemma \ref{lem:almostforest}, we need the following observation that in a typical $D:=D_{n,p}$ there is a gap between $\Delta_1(D)$ and $\Delta_2(D)$.

\begin{proposition}\label{prop:dgap}
Let $D:=D_{n,p}$ with $1/2 \geq p\geq \frac{\log^{10} n}{n}$. Then whp it holds that
\begin{align*}
    \Delta_1(D)-\Delta_2(D)\geq \frac{\sqrt{np}}{2\log n}.
\end{align*}
\end{proposition}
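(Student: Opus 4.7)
The plan is to transfer from $D := D_{n,p}$ to the bipartite model $B := B_{n,p}$ via \Cref{fact:model}, prove the analogous gap between the top two degrees of $B$, and lift the conclusion back to $D$ at the cost of an additive error of at most two. By \Cref{fact:model}, $D$ has the same distribution as $D_\pi(B)$ for a uniform random permutation $\pi \in S_n$. From \Cref{def:model} one checks that the $2n$ in- and out-degrees of $D$ are in a natural bijection (mediated by $\pi$) with the $2n$ degrees of $B$, each paired value differing by at most $1$ because of a potential self-loop correction; explicitly,
\[
\deg_D^+(v) = \deg_B(v_X) - \mathbf{1}\!\left[v\,\pi^{-1}(v) \in E(B)\right]
\]
and analogously (up to a relabeling by $\pi$) for the in-degrees. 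Since coordinate-wise $\ell_\infty$-closeness is preserved by sorting---if $|x_i - y_i| \le c$ for all $i$ then the sorted sequences satisfy $|x_{(k)} - y_{(k)}| \le c$ for all $k$---we obtain $|\Delta_k(D) - \Delta_k(B)| \le 1$ for every $k$, and in particular $\Delta_1(D) - \Delta_2(D) \ge \Delta_1(B) - \Delta_2(B) - 2$. Because $p \ge \log^{10}n/n$ forces $\sqrt{np}/\log n \gg 4$, it suffices to prove $\Delta_1(B) - \Delta_2(B) \ge \sqrt{np}/\log n$ w.h.p.

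For this bipartite statement I would follow the same Bollob\'as-style scheme behind $(B3)$ of \Cref{typicalbipartite}, applied at the very top of the degree sequence. The $2n$ vertex degrees of $B$ are i.i.d.\ $\Bin(n,p)$ within each part and share only a single edge variable across the parts, so they may be treated as essentially independent at the precision we need. Set $\lambda := \sqrt{np}/\log n$ and let $U := np + 3\sqrt{np\log n}$, which bounds $\Delta_1(B)$ w.h.p.\ by \Cref{chernoff2}. Summing over ordered pairs of candidate top-two vertices and over the value $t$ of $\Delta_2(B)$, and using $\Prob(Z \in [t, t+\lambda)) \le \lambda\,\Prob(Z = t)$ (valid since the binomial mass is decreasing above its mean),
\[
\Prob\!\bigl(\Delta_1(B) - \Delta_2(B) < \lambda,\ \Delta_1(B) \le U\bigr) \;\le\; O(n^2)\cdot\lambda\cdot \sum_{t \le U} \Prob(Z = t)^2\, \Prob(Z \le t)^{2n-2},
\]
where $Z \sim \Bin(n,p)$. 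A standard extreme-value calculation---using the local CLT to get $\Prob(Z = t_*) = \Theta(\sqrt{\log n}/(n\sqrt{np}))$ at the typical maximum $t_* \approx np + \sqrt{2np\log(2n)}$ and Mills' ratio to get $\Prob(Z \le t_*)^{2n} = \Theta(1)$---shows that the sum concentrates on a window of width $\Theta(\sqrt{np/\log n})$ around $t_*$ and is $O(\sqrt{\log n}/(n^2\sqrt{np}))$. Plugging in, the whole bound becomes $O(1/\sqrt{\log n}) = o(1)$.

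The main technical obstacle is precisely this extreme-value estimate: one must show that two of the $2n$ (nearly independent) binomial degrees of $B$ are very unlikely to land in the same narrow upper-tail window of width $\lambda = \sqrt{np}/\log n$. The safety margin is the factor $\sqrt{\log n}$ between the typical top gap $\Theta(\sqrt{np/\log n})$ predicted by classical extreme-value theory and the required bound $\sqrt{np}/\log n$; capturing it requires precise control of the binomial mass function in the upper tail via the local CLT (or a direct Stirling estimate). Once this is in hand, the passage from $B$ back to $D$ is essentially automatic, since the self-loop correction costs at most $2$, an amount swallowed by $\sqrt{np}/\log n$ when $p \ge \log^{10}n/n$.
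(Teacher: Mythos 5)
Your reduction step is exactly the paper's: pass to the model $D_\pi(B_{n,p})$ via \Cref{fact:model}, observe that each in/out-degree of $D$ equals the corresponding degree of $B$ up to a self-loop correction of at most $1$, and conclude $\Delta_1(D)-\Delta_2(D)\ge \Delta_1(B)-\Delta_2(B)-2$, the additive loss being negligible against $\sqrt{np}/\log n$. Where you genuinely diverge is the bipartite gap itself: the paper invokes property (B3) of \Cref{typicalbipartite}, i.e.\ the bipartite analogue of Bollob\'as's degree-separation theorem (Theorem 3.15 in \cite{bollobas1998random}), as a black box, whereas you re-derive the needed estimate by a direct extreme-value computation (union over pairs, the monotone-pmf bound $\Prob(Z\in[t,t+\lambda))\le\lambda\,\Prob(Z=t)$, and local-CLT asymptotics at the typical maximum), arriving at failure probability $O(1/\sqrt{\log n})$. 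Your numerology is correct: the typical top spacing is $\Theta(\sqrt{np/\log n})$, the required gap $\sqrt{np}/\log n$ sits a factor $\sqrt{\log n}$ below it, and the local CLT is legitimate at deviations $\Theta(\sqrt{np\log n})$ since $np\ge\log^{10}n\gg\log^{3}n$; so the sketch is completable, provided a full write-up also handles the part of the sum with $t$ below the mean (where the pmf is not decreasing, but $\Prob(Z\le t)^{2n-2}$ makes those terms negligible) and the mild dependence between degrees in opposite parts (a single shared edge, absorbable into the window width; within a part the degrees are genuinely independent, so the product bound is fine). The trade-off: the paper's route is short but leans on the cited theorem (and, as stated, (B3) literally bounds $\Delta_2(B)-\deg(v)$ for $v$ outside the top two, so it is being read as a top-gap statement when applied to $x_1,x_2$), while your argument is self-contained and proves precisely the $\Delta_1(B)-\Delta_2(B)$ estimate that is needed, at the cost of a somewhat delicate moderate-deviation local limit calculation that the paper avoids.
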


\begin{proof}
By Fact \ref{fact:model} it suffices to show the proposition holds whp for $D_{\pi}(B_{n,p})$. We prove this by first exposing $B:=B_{n,p}$ and then later exposing $\pi \in S_n$. Let $B:=B_{n,p}$, so with high probability, it has the properties of \Cref{typicalbipartite}. Let $x_1$ and $x_2$ be the vertices of largest and second largest degree in $B$, respectively. Assume without loss of generality that both $x_1$, $x_2 \in X$, i.e., both vertices are in the first partition of $B$. By property (B3) of \Cref{typicalbipartite} we have that
\begin{align*}
    \deg_B(x_1)-\deg_B(x_2)\geq \frac{\sqrt{np}}{\log n}.
\end{align*}
Now reveal $\pi \in S_n$ uniformly at random and consider $D:=D_{\pi}(B)$. By the construction of Definition \ref{def:model}, we have that $\deg_B(x_1)-1\leq \deg^+_D(x_1) \leq \deg_B(x_1)$ and $\deg_B(x_2)-1\leq \deg^+_D(x_2) \leq \deg_B(x_2)$. Thus,
\begin{align*}
    \Delta_1(D)-\Delta_2(D)=\deg_D^+(x_1)-\deg_D^+(x_2)\geq \frac{\sqrt{np}}{\log n}-1\geq \frac{\sqrt{np}}{2\log n}.
\end{align*}
Since the last bound holds for any $\pi\in S_n$, the result now follows. 
\end{proof}

We are now ready to prove \Cref{lem:almostforest}.

\begin{proof}[Proof of \Cref{lem:almostforest}]
By Fact \ref{fact:model} it suffices to show that the result holds whp for $D_{\pi}(B_{n,p})$. As before, we will first expose $B:=B_{n,p}$ and then later $\pi \in S_n$. Let $B:=B_{n,p}$, and observe that $B$ simultaneously satisfies \Cref{typicalbipartite} and \Cref{cor:covering} with high probability. Let $x_\star$ be the vertex of largest degree in $B$ and assume without loss of generality that $x_\star \in X$. By property (B3) of \Cref{typicalbipartite}, we have that
\begin{align*}
    d:=\Delta(B)-\Delta_2(B)\geq \frac{\sqrt{np}}{\log n}.
\end{align*}
Let $E'\subseteq B$ be an arbitrary subset of $d$ edges in $B$ containing $x_\star$. Set $B'=B\setminus E'$. Then by \Cref{cor:covering}, there exists two families of perfect matchings $\cM_1$ and $\cM_2$ satisfying
\begin{enumerate}
    \item[(a)] For $j\in \{1,2\}$ and for every $M\neq M'\in \mathcal M_j$, we have $M\cap M'=\emptyset$,
    \item[(b)] $B'=\bigcup_{M\in \mathcal M_1\cup \mathcal M_2} M$, 
    \item[(c)] $\Delta(B')=\Delta_2(B)=|\mathcal M_1|+|\mathcal M_2|$.
\end{enumerate}

We now reveal $\pi \in S_n$. Since $|\cM_1|+|\cM_2|=\Delta(B')\leq n$, by a simple union bound we obtain that whp $\pi \in S_n$ simultaneously satisfies \Cref{cor:comp} for every $M\in \cM_1\cup \cM_2$ and \Cref{lem:probcycles} for both $\cM_1$ and $\cM_2$. Take such an instance of $\pi \in S_n$. Let $D:=D_{\pi}(B)$, and observe by \cref{fact:model} that $D$ is distributed identically to $D_{n,p}$. 

For each $j\in \{1,2\}$, let $r_j=|\cM_j|$ and $\cM_j=\{M_1,\ldots, M_{r_j}\}$ be the family of perfect matchings. Consider the $1$-factors $D_{\pi}(M_1), \ldots, D_{\pi}(M_{r_j})$. For each $j$, since $\cM_j$ is a disjoint family of matchings and $\deg_{D_\pi(B)'}^+(x_\star) \in \{\delta(B'),\delta(B')-1\}$, there exists at most one $D_{\pi}(M_j)$ such that $x_\star$ is an isolated vertex. If such a matching exists, remove it from the family. After reordering we obtain a family $\cM'_j=\{M_1,\ldots,M_{t_j}\}$ where $r_j-1\leq t_j\leq r_j$ and for $1\leq i \leq t_j$, there exists $y_i \in V(D)$ with $\orarrow{x_{\star}y_i} \in D_{\pi}(M_i)$. 

We will now construct a graph $R_j$ and a family of linear forest $\cF_j=\{F_1,\ldots,F_{t_j}\}$ satisfying the following properties:
\begin{enumerate}
    \item[(F1)] For $1\leq i \leq t_j$, the edge $\orarrow{x_\star y_i}\in F_i$.
    \item[(F2)] For $1\leq i \leq t_j$, the forest $F_i$ has at most $4\log n$ components.
    \item[(F3)] $D_{\pi}(\cM_j)=R_j\cup \left(\bigcup_{i=1}^{t_j} F_i\right)$
    \item[(F4)] $\Delta(R_j)\leq O\left((t_j\log^4 n)^{1/3}\right)$.
\end{enumerate}
For each $1\leq i\leq t_j$ and cycle $C\subseteq D_{\pi}(M_i)$, select an edge from $C\setminus\{\orarrow{x_\star y_j}\}$ independently and uniformly at random and add it to $R_j$. Set $F_i= D_{\pi}(M_i)\setminus R_j$. Clearly, $F_i$ is a linear forest and $\orarrow{x_{\star}y_j} \in F_i$ for $1\leq i \leq t_j$. This satisfies property (F1). Property (F2) follows from \Cref{cor:comp}, since the number of components in $D_{\pi}(M_i)$ and $F_i$ is the same. Property (F3) is immediate from the construction.

Finally, we will show that property (F4) holds. Let $w\in V(D)$ and let $c_{i,w}$ be the length of the cycle containing $w \in D_{\pi}(M_i)$ (If $w$ is an isolated vertex, then set $c_{i,w}=1$). Then the degree of $w$ in $R_j$ can be written as $\deg^{+}_{R_j}(w)=\sum_{i=1}^{t_j}X_i$, where $X_i$ are independent $0/1$-random variables given by $X_i=\deg^{+}_{R_j\cap D_{\pi}(M_i)}(w)$. That is, $X_i$ counts if the out-edge from $w$ in the $1$-factor $D_{\pi}(M_i)$ was selected to $R_j$. One can observe that
\begin{align*}
    \EE(X_i)=\begin{cases}
        0, &\quad \text{if $w=x_\star$ or $w$ is an isolated vertex in $D_{\pi}(M_i)$}\\
        \frac{1}{c_{i,w}-1}, &\quad \text{if $w$ and $x_\star$ belongs to the same cycle in $D_{\pi}(M_i)$}\\
        \frac{1}{c_{i,w}}, &\quad \text{otherwise},
    \end{cases}
\end{align*}
and consequently $\EE(X_i)\leq 2/c_{i,w}$ for $1\leq i\leq t_j$ and $w\in V(D)$. By \Cref{lem:probcycles}, the expected value of $\deg^+_{R_j}(w)$ can be bounded by
\begin{align*}
    \EE(\deg^+_{R_j}(w))\leq \sum_{i=1}^{t_j} \frac{2}{c_{i,w}} \leq O\left((t_j\log^4 n)^{1/3}\right).
\end{align*}
Thus, by \Cref{chernoff2}, we have that
\begin{align*}
    \PP\left(\deg^+_{R_j}(w)>O\left((t_j\log^4 n)^{1/3}\right)\right)\leq \exp\left(-O\left((t_j\log^4 n)^{1/3}\right)\right)=o\left(\frac{1}{n}\right).
\end{align*}
Hence, by a union bound we obtain that whp $\Delta^+(R_j)\leq O\left((t_j\log^4 n)^{1/3}\right)$. Similarly, whp we have that $\Delta^-(R_j)\leq O\left((t_j\log^4 n)^{1/3}\right)$. Therefore, there exists a choice of $R_j$ such that (F4) holds.

To finish the proof, take $R:=R_1\cup R_2 \cup D_{\pi}\left(\left(\cM_1\setminus \cM'_1\right)\cup \left(\cM_2\setminus \cM'_2\right)\right)$, $E=D_{\pi}(E')$ and $\cF:=\cF_1\cup \cF_2$. Note that $\Delta_2(D)-1\leq \Delta_2(B)\leq \Delta_2(D)$, since there might be an edge in $B$ between $x_\star$ and $\pi(x_\star)$, and $D$ has no self loops.  
Therefore, since $\Delta_2(B)=r_1+r_2$ and $r_j-1\leq t_j\leq r_j$ for $1\leq j\leq 2$, we have that
\begin{align*}
    \Delta_2(D)-3\leq \Delta_2(B)-2\leq t=|\cF|=t_1+t_2= \Delta_2(B)\leq \Delta_2(D).
\end{align*}
Property (i) and (ii) follows immediately from (F1) and (F2), respectively. By construction, $E$ consists of out-edges from $x_\star$ and (F3) gives us
\begin{align*}
    E=D_{\pi}(E')=D_{\pi}(B\setminus B')=D_{\pi}(B)\setminus D_{\pi}(B')=D\setminus \left(R\cup\left(\bigcup_{i=1}^t F_t\right)\right).
\end{align*}
Hence, by the fact that $\deg^+_R(x_\star)=0$, we have property (iii), i.e., $|E|=\Delta_1(D)-t$. Property (iv) follows from (F4), since
\begin{align*}
    \Delta(R)\leq O\left((t_1\log^4 n)^{1/3}+(t_2\log^4 n)^{1/3}\right)+2\leq O\left((t\log^4 n)^{1/3}\right).
\end{align*}
This concludes the proof. 
\end{proof}

\section{Covering linear forests by Hamilton cycles}\label{sec:coverhamilton}

In this section we prove some lemmas that will enable us to cover linear forests by Hamilton cycles. Our main technical tool in this section is the main result of \cite{ferber2017robust} that asserts that appropriate pseudorandom digraphs are Hamiltonian. We start with a definition.

\begin{definition}\label{def:pseudorandom}
     A directed graph $H$ on $n$ vertices is \emph{$(n,\alpha,p)$-pseudorandom} if the following properties hold:
\begin{enumerate}
    \item[(P1)] For every $v \in V(H)$, we have 
    \begin{align*}
        \min\{\deg^+_H(v),\deg^-_H(v)\}\geq (1/2+2\alpha)np
    \end{align*}
    \item[(P2)] For every $X \subseteq V(H)$ of size $|X| \leq \frac{\log^2n}{p}$, we have
    \begin{align*}
        e_H(X) \leq |X| \log^{2.1}n
    \end{align*}
    \item[(P3)] For every two disjoint $X, Y \subseteq V(H)$ with $|X|,|Y| \geq \frac{\log^{1.1}n}{p}$, we have 
    \begin{align*}
        e_H(X,Y)\leq (1+\alpha/2)p|X||Y|
    \end{align*}
\end{enumerate}
\end{definition}

In other words, an $(n,\alpha,p)$-pseudorandom digraph is a graph with a certain minimum degree and ``well-distributed'' edges. In \cite{ferber2017robust}, the authors proved that these properties are sufficient for containing a directed Hamilton cycle.

\begin{theorem}[\cite{ferber2017robust}, Theorem 3.2]\label{thm:pseudorandom}
    Let $\alpha>0$ and $1/2 \geq p\geq \frac{\log^9{n}}{n}$. Then for sufficiently large $n$, every $(n,\alpha,p)$-pseudorandom directed graph is Hamiltonian.
\end{theorem}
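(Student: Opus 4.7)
The plan is to follow the now-standard two-phase template for Hamiltonicity in pseudorandom digraphs: first produce a $1$-factor of $H$, i.e.\ a spanning collection of vertex-disjoint directed cycles, and then iteratively merge its cycles by local switches that preserve the $1$-factor structure, until only one cycle --- a Hamilton cycle --- remains.

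For the first phase I would reformulate the $1$-factor problem as a perfect matching problem in the bipartite auxiliary graph $G$ on two disjoint copies $V^{+},V^{-}$ of $V(H)$, with $u^{+}v^{-}\in E(G)$ iff $\orarrow{uv}\in E(H)$; then verify Hall's condition by a case analysis on $S\subseteq V^{+}$. For $|S|\leq (1/2+2\alpha)np$ property (P1) immediately yields $|N_{G}(S)|\geq (1/2+2\alpha)np\geq |S|$. For larger $S$, I would argue by contradiction: if $|N_{G}(S)|<|S|$ and $T:=V^{-}\setminus N_{G}(S)$, then $|T|>n-|S|$ and $e_{H}(S,T)=0$. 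Summing out-degrees via (P1) gives a lower bound on the total out-edges from $S$, while (P2) (when $|S|$ is small) or (P3) (when $|S|$ is large, splitting the head set into $S\cap N_{G}(S)$ and $N_{G}(S)\setminus S$) provides an upper bound; combining them gives $(1/2+2\alpha)n\leq (1+\alpha/2)|N_{G}(S)|+o(n)$, contradicting $|N_{G}(S)|<|S|$. The remaining regime in which $|S|$ is very close to $n$ is handled by the symmetric argument on the complementary side using minimum in-degrees.

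For the second phase, given a $1$-factor $F=C_{1}\cup\cdots\cup C_{k}$ with $k\geq 2$, the key operation is a switch: if $\orarrow{xy}\in C_{i}$ and $\orarrow{uv}\in C_{j}$ with $i\neq j$ and both $\orarrow{xv},\orarrow{uy}\in E(H)$, then $(F\setminus\{\orarrow{xy},\orarrow{uv}\})\cup\{\orarrow{xv},\orarrow{uy}\}$ is a new $1$-factor in which $C_{i}$ and $C_{j}$ have been fused into a single cycle, so $k$ drops by one. To guarantee that such a switch exists whenever $k\geq 2$, I would pick any cycle $C_{i}$: property (P1) produces at least $|V(C_{i})|(1/2+2\alpha)np$ out-edges leaving $V(C_{i})$, property (P2) bounds the out-edges that stay inside by $|V(C_{i})|\log^{2.1}n$, so the majority cross to $V\setminus V(C_{i})$. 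A double-counting argument then shows that for some $j\neq i$ there are many edges $\orarrow{xv}$ with $x\in V(C_{i})$ and $v\in V(C_{j})$, and (P3) applied to the pair $\bigl(V(C_{j}),V(C_{i})\bigr)$ --- or, for a tiny $C_{i}$, to the aggregation of $V(C_{i})$ with several other cycles --- delivers a matching back-edge $\orarrow{uy}$ completing the switch. Iterating until $k=1$ produces the desired Hamilton cycle.

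The heart of the difficulty is the regime of very short cycles, those with $|V(C_{i})|$ below the (P3) threshold $\log^{1.1}n/p$, where (P3) cannot speak directly about $V(C_{i})$. Here (P2) and (P1) must be combined to show that the out-neighbourhoods of vertices of $V(C_{i})$ are genuinely spread across many other cycles, and an averaging argument over all admissible candidate switches is needed to locate one that both merges $C_{i}$ into some $C_{j}$ and avoids creating a new problematic short cycle that could block the next iteration. Making this bookkeeping compatible with the pseudorandom error terms at each step is precisely what forces the hypothesis $p\geq \log^{9}n/n$ in (P1)--(P3), and is the technically subtlest part of the argument.
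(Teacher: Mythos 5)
This statement is not proved in the paper at all: it is imported verbatim from \cite{ferber2017robust} (Theorem 3.2), so there is no internal argument for you to match, and your proposal has to stand on its own as a proof of that external result. Your first phase is fine in outline: passing to the bipartite double cover and verifying Hall's condition from (P1) for small $S$, from (P2) for sets up to size $\log^2 n/p$, and from (P3) (with the disjointness/averaging care you mention) for larger sets is a standard and essentially correct way to extract a $1$-factor, and it mirrors the Gale--Ryser/Ore-type arguments the paper uses elsewhere for bipartite graphs.

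The gap is in the merging phase. Properties (P2) and (P3) are \emph{upper} bounds on edge counts, and (P1) is a lower bound only on degrees into the whole vertex set; nothing in (P1)--(P3) gives a lower bound on $e_H(A,B)$ for two prescribed vertex sets $A,B$. But the existence of your switch requires exactly such a lower bound on a prescribed set of pairs: for each crossing edge $\orarrow{xv}$ with $x\in V(C_i)$, $v\in V(C_j)$, the one admissible back-edge is the specific pair from the $C_j$-predecessor of $v$ to the $C_i$-successor of $x$, and you need at least one of these particular pairs to be an edge. Pseudorandomness cannot ``deliver'' this: (P3) can only cap the number of crossing edges, never produce a back-edge, and a digraph satisfying (P1)--(P3) can perfectly well omit every pair in such a prescribed collection. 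The claim ``a switch exists whenever $k\geq 2$'' is in fact false in the stated generality: if $C_i$ and $C_j$ are both of bounded length, (P1)--(P3) do not even force a single edge between $V(C_i)$ and $V(C_j)$, let alone an aligned pair $\orarrow{xv},\orarrow{uy}$. You acknowledge the short-cycle regime as ``the technically subtlest part,'' but what you offer there (an averaging over candidate switches, aggregating small cycles) is a restatement of the difficulty rather than an argument, and it is precisely the content of the cited theorem; its actual proof does not proceed by this naive two-cycle switching and is substantially longer (controlling the cycle structure of the $1$-factor and merging via more elaborate structures than single switches). As it stands, the proposal proves the $1$-factor step but not Hamiltonicity.
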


Now, in order to cover a directed linear forest by a Hamilton cycle, we construct an auxiliary digraph such that each Hamilton cycle in this digraph corresponds to a cover of the linear forest by a Hamilton cycle. Then, we show that this auxiliary digraph satisfies the notion of pseudorandomness in \Cref{def:pseudorandom} and therefore by Theorem \ref{thm:pseudorandom} it contains a Hamilton cycle. Before the formal description we need to introduce some notation. 

A directed linear forest $F$ can be written as an union of disjoint directed paths $F=\bigcup_{i=1}^\ell P_i$. Given a path $P=(x_1,\ldots,x_m)$, we define the \emph{source} $s(P)$ of the path $P$ as the first vertex in the path, i.e., $s(P)=x_1$.  Similarly, the \emph{sink} $t(P)$ of $P$ is defined as the last vertex of the path, i.e., $t(P)=x_m$. Recall that we consider isolated vertices to be paths of length $0$. We now are able to define our auxiliary ``contracting'' digraph.

\begin{definition}\label{def:auxgraph}
Give a directed graph $D$ and a linear forest $F=\bigcup_{i=1}^\ell P_i$, we define the auxiliary directed graph $H_{F,D}$ as the graph with vertex set $V(H_{F,D})=\{P_j\}_{j\in [\ell]}$ and edge set given by
\begin{align*}
    H_{F,D}= \left\{\orarrow{P_iP_j} :\: \overrightarrow{t(P_i)s(P_j)} \in D \right\}
\end{align*}
That is, $H_{F,D}$ is the graph where the vertices are the paths and we add an edge from $P_i$ to $P_j$ if there is an edge in $D$ from the sink of $P_i$ to the source of $P_j$.
\end{definition} 

Note by the last definition that the number of vertices of $H_{D,F}$ is exactly the number of components of $F$. The next result shows that if our host digraph $D$ is typical and $F$ has a ``relatively large'' number of connected components, then to check that $F$ can be covered by a Hamilton cycle is the same as checking that $H_{F,D}$ satisfies property (P1) of \Cref{def:pseudorandom}.

\begin{proposition}\label{prop:auxgraph} 

Let $D:=D_{n,p}$ with $1/2 \geq p\geq \frac{\log^{10} n}{n}$. Then, whp the following holds: Suppose that $F$ is a directed linear forest with $\ell\geq \frac{n^{1/3}}{2p^{2/3}}$
components and that its auxiliary digraph $H_{F,D}$ satisfies $\delta(H_{F,D}) \geq 3p\ell/4$. Then there exists a Hamilton cycle in $D\cup F$ covering all the edges of $F$.
\end{proposition}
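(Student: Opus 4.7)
The plan is to verify that the auxiliary digraph $H_{F,D}$ is $(\ell, 1/8, p)$-pseudorandom in the sense of \Cref{def:pseudorandom}, and then invoke \Cref{thm:pseudorandom} to produce a Hamilton cycle in $H_{F,D}$. Any such cycle $P_{i_1} \to P_{i_2} \to \cdots \to P_{i_\ell} \to P_{i_1}$ lifts to a Hamilton cycle in $D \cup F$ that covers every edge of $F$: one traverses each $P_{i_j}$ using its internal edges (which belong to $F$) and then uses the edge $\orarrow{t(P_{i_j})s(P_{i_{j+1}})} \in D$ guaranteed by the definition of $H_{F,D}$ to pass to the next path.

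To check pseudorandomness with $\alpha = 1/8$, I would first note that (P1) is precisely the hypothesis, since $3p\ell/4 = (1/2 + 2\alpha)\ell p$. For (P2) and (P3), the key observation is that every edge of $H_{F,D}$ corresponds to a unique edge of $D$ connecting the sink of one path to the source of another. More precisely, given $X \subseteq V(H_{F,D})$, set $S_X = \{s(P) : P \in X\}$ and $T_X = \{t(P) : P \in X\}$; since the paths in $F$ are vertex-disjoint, we have $|S_X| = |T_X| = |X|$, and for disjoint $X, Y$ the sets $T_X$ and $S_Y$ are disjoint subsets of $V(D)$. Consequently
\begin{equation*}
e_{H_{F,D}}(X) \leq e_D(T_X \cup S_X) \quad \text{and} \quad e_{H_{F,D}}(X, Y) \leq e_D(T_X, S_Y),
\end{equation*}
and it suffices to bound the right-hand sides using (D2) and (D3) of \Cref{typicaldirected}. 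The hypothesis $\ell \geq n^{1/3}/(2p^{2/3})$ combined with $p \geq \log^{10} n/n$ yields $\ell \geq n/\polylog(n)$, so $\log \ell = (1-o(1))\log n$. Hence (P2) follows from (D2) applied to $T_X \cup S_X$ (of size at most $2\log^2\ell/p \leq 2\log^2 n/p$), giving $e_{H_{F,D}}(X) \leq 24|X|\log^2 n \leq |X|\log^{2.1}\ell$; and (P3) follows from (D3) applied to $T_X$ and $S_Y$, whose sizes are at least $\log^{1.1}\ell/p \geq \log^{1.05}n/p$, giving $e_{H_{F,D}}(X,Y) \leq (1+o(1))|X||Y|p \leq (1+\alpha/2)|X||Y|p$.

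With the three properties verified, \Cref{thm:pseudorandom} (applied with $\ell$ in place of $n$, which requires $p \geq \log^9\ell/\ell$, easily implied by $p \geq \log^{10} n/n$ and $\ell \geq n/\polylog(n)$) produces a Hamilton cycle in $H_{F,D}$, and the lifting described above finishes the argument. The main piece of care I expect to be needed is the bookkeeping between $\log n$ and $\log \ell$: the lower bound $\ell \geq n^{1/3}/(2p^{2/3})$ in the hypothesis exists precisely to make this translation legitimate, so one only has to double-check each inequality (e.g.\ $24\log^2 n \leq \log^{2.1}\ell$) for $n$ sufficiently large. There is no conceptual obstacle beyond this parameter chase.
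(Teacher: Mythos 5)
Your overall route is exactly the paper's: verify that $H_{F,D}$ is $(\ell,1/8,p)$-pseudorandom via the sink/source correspondence and properties (D2)--(D3) of \Cref{typicaldirected}, invoke \Cref{thm:pseudorandom}, and lift the Hamilton cycle of $H_{F,D}$ back to $D\cup F$; your handling of (P1), (P2), (P3) coincides with the paper's. The problem is in the parameter bookkeeping, which you yourself identify as the only delicate point and then get wrong. From $\ell\geq \frac{n^{1/3}}{2p^{2/3}}$ and $p\geq \log^{10}n/n$ you deduce $\ell\geq n/\polylog(n)$, hence $\log \ell=(1-o(1))\log n$. This inference is backwards: the lower bound on $p$ only shows that the threshold $\frac{n^{1/3}}{2p^{2/3}}$ is at most $n/(2\log^{20/3}n)$; it gives no lower bound on $\ell$ beyond the threshold itself. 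For large $p$ (say $p=1/2$) the hypothesis allows $\ell$ of order $n^{1/3}$, so all that is available is $\log\ell\geq(\tfrac13-o(1))\log n$, coming from $\ell\geq n^{1/3}/2$ and $p\leq 1/2$. This weaker bound still saves the comparisons $24\log^2 n\leq\log^{2.1}\ell$ and $\log^{1.1}\ell\geq\log^{1.05}n$ for $n$ sufficiently large (this is how the paper argues, with explicit constants such as $216$), so (P2) and (P3) survive the fix.

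The genuine gap is your verification of the hypothesis $p\geq\log^9\ell/\ell$ of \Cref{thm:pseudorandom}, which you dismiss as ``easily implied.'' It is not implied by the inequalities you cite. The hypotheses only guarantee $p\ell\geq\tfrac12(np)^{1/3}$, and even granting $\ell\geq n/\log^{C}n$ one gets $\log^9\ell/\ell\leq\log^{9+C}n/n$, which is below $p$ only if $C\leq 1$, whereas the best constant available here is $C=20/3$. Concretely, when $p=\log^{10}n/n$ and $\ell$ is of order $n/(np)^{2/3}=n/\log^{20/3}n$ (exactly the situation produced later in \Cref{lem:keycover}), one has $p\ell\approx\log^{10/3}n$ while $\log^9\ell\approx\log^9 n$, so the inequality you need does not follow from what you assume; in the regime where $np$ is polylogarithmic and $\ell$ is close to its minimum, the requirement amounts to $(np)^{1/3}\gtrsim\log^9 n$. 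This is precisely the step to which the paper devotes its only displayed computation (using the monotonicity of $\log^9x/x$ together with the lower bound on $\ell$), and it is the one place where your write-up and the paper truly diverge: as written, your justification of this step fails, and it needs a careful argument (or stronger assumptions on $np$) rather than a parameter chase.
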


\begin{proof}
First, note that it suffices to show that $H_{F,D}$ is Hamiltonian. Indeed, suppose that $H_{F,D}$ has a Hamilton cycle $C$. Let $C'$ be the cycle in $D\cup F$ obtained by replacing the vertex $P_i$ in $H_{F,D}$ by the path $P_i$ in $F$. Clearly $C'$ is a directed cycle containing all the vertices of $V(D)$ (since $V(F)=\bigcup_{i=1}^\ell P_i$) and $F\subseteq C'$. Therefore $C'$ is a Hamilton cycle covering $F$. 

We claim that $H_{F,D}$ is $(\ell,1/8,p)$-pseudorandom and consequently by \Cref{thm:pseudorandom} that it is Hamiltonian. First, note that 
\begin{align*} 
\frac{\log^9\ell}{\ell}  \leq \frac{\log^9 \left(\frac{n^{1/3}}{2p^{2/3}}\right)}{\frac{n^{1/3}}{2p^{2/3}}} 
 \leq \frac{\log^9 n -2^9\log^9 \left(2^{3/2}p\right)}{3^9n^{1/3}}2p^{2/3}
 < p.
\end{align*}
The first inequality holds because $\frac{\log^9 x}{x}$ is decreasing, and the last inequality holds because $p> \frac{\log^{10} n}{n}$.

Now that we've verified $p>\frac{\log^9\ell}{\ell}$, we just need to check properties (P1), (P2) and (P3) of \Cref{def:pseudorandom}. Property (P1) follows immediately from the hypothesis of the proposition applied to $\alpha=1/8$. We now prove property (P2). Let $X\subseteq V(H_{D,F})$ of size $|X|\leq \frac{\log^2 \ell}{p}$. Consider the set $X'\subseteq V(D)$ defined by
\begin{align*}
    X' := \left\{x \in V(D) :\: x= s(P) \text{ or } x= t(P) \text{ for } P \in X \right\}
\end{align*}
That is, $X'$ is the set of all sources and sinks of paths in $X$. Note by definition, that $|X'|\leq 2|X| \leq \frac{2\log^2 \ell}{p}\leq \frac{2\log^2 n}{p}$, since $\ell\leq n$. Therefore, by property (D2) of \Cref{typicaldirected} we have that $e_D(X')\leq 12|X'|\log^2 n$. Since any edge of $H_{F,D}[X]$ corresponds to an edge in $D[X']$ we obtain that $e_{H_{F,D}}(X) \leq e_D(X')$. Therefore, for $\ell\geq \frac{n^{1/3}}{2p^{2/3}}$
, we have that
\begin{align*}
    e_{H_{F,D}}(X) \leq 12|X'|\log^2 n\leq 24|X|\log^2n \leq 216|X|\log^2 \ell \leq |X|\log^{2.1} \ell,
\end{align*}
and property (P2) holds.

To prove property (P3), let $X, Y$ be disjoint subsets of $V(H_{F,D})$ with $|X|,|Y| \geq \frac{\log^{1.1}{\ell}}{p}$. Let $X'$ and $Y'$ be subsets of $V(D)$ defined by
\begin{align*}
    X' = \left\{t(P) \in V(D) :\: P \in X \right\} \quad \text{and} \quad Y' = \left\{s(P) \in V(D) :\: P \in Y \right\}
\end{align*}
That is, $X'$ is the set of sinks of $X$ and $Y'$ is the set of sources of $Y$. Note that $|X'|=|X|$ and $|Y'|=|Y|$. Thus, we have $|X'|, |Y'|\geq\frac{ \log^{1.1}\ell}{p}\geq \frac{ \log^{1.05}n}{p}$. Hence, by property (D3) of \Cref{typicaldirected} 
\begin{align*}
e_{H_{F,D}}(X,Y)=e_D(X',Y')\leq \left(1+\frac{1}{16}\right)p|X||Y|,
\end{align*}
concluding the proof.
\end{proof}

The following result is the main technical lemma of the section. Roughly speaking, it states that given a large family of linear forests with small number of components, one can almost cover it with a family of Hamilton cycles in a way that guarantees the remaining edges form a sparse graph with small maximum degree. Moreover, for technical reasons, the choice of Hamilton cycles is done to contain specific edges of the linear forests. 

\begin{lemma}\label{lem:keycover}
Let $D:=D_{n,p}$ with $1/2 \geq p\geq\frac{\log^{10} n}{n}$, and let $np/2 \leq t \leq 3np/2$ be an integer. The following holds whp: Suppose that $\cF=\{F_1,\ldots,F_t\}$ is a family of directed linear forests and $x\in V(D)$ is a vertex satisfying
\begin{enumerate}
    \item[(i)] For each $1\leq i \leq t$, the linear forest $F_i$ has at most $4\log n$ components.
    \item[(ii)] For each $1\leq i \leq t$, there exists a vertex $y_i \in V$ such that $\orarrow{xy_i}\in F_i$.
\end{enumerate}
Then there exists a family of Hamilton cycles $\cC=\{C_1,\ldots,C_t\}$ such that $\orarrow{xy_i} \in C_i$ and $C_i$ is a cycle in $D\cup F_i$. Moreover, if $R=\bigcup_{i=1}^t\left(F_i\setminus C_i\right)$ is the digraph of non-covered edges of $\cF$ by $\cC$, then $\Delta(R)\leq 7t^{1/3}$.
\end{lemma}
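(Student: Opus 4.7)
The plan is to reduce each $F_i$ to a sparser linear forest $F_i'$ with many more components, apply \Cref{prop:auxgraph} to obtain a Hamilton cycle $C_i$ covering $F_i'$, and verify that the deleted edges form a graph $R$ of small maximum degree. Set $q := (np)^{-2/3}$. Independently for each $1 \leq i \leq t$, delete each edge of $F_i \setminus \{\overrightarrow{xy_i}\}$ with probability $q$, and call the resulting linear forest $F_i'$; in particular $\overrightarrow{xy_i}$ is preserved. Let $\ell_i$ be the number of components of $F_i'$. Since $F_i$ has at most $4\log n$ components (hence at least $n - 4\log n$ edges), Chernoff gives $\ell_i = (1 \pm o(1))qn$ with very high probability, comfortably above the threshold $n^{1/3}/(2p^{2/3})$ required by \Cref{prop:auxgraph}.

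Next, to apply \Cref{prop:auxgraph} and obtain a Hamilton cycle $C_i$ of $D$ covering $F_i'$ (which forces $\overrightarrow{xy_i} \in F_i' \subseteq C_i$), I need to verify $\delta(H_{F_i',D}) \geq 3p\ell_i/4$. Fix a path $P$ in $F_i'$ with sink $v = t(P)$; then $\deg^+_{H_{F_i',D}}(P) = |N_D^+(v) \cap S_i|$, where $S_i$ denotes the set of sources of $F_i'$. A vertex $u \neq y_i$ lies in $S_i$ exactly when either $u$ has no in-edge in $F_i$ or its (unique) in-edge in $F_i$ was deleted; crucially, since $F_i$ is a linear forest, distinct vertices have disjoint in-edges, so the indicators $\{\mathbb{1}[u \in S_i]\}_u$ are mutually independent Bernoullis each with success probability at least $q$. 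Combining this with $\deg_D^+(v) = (1+o(1))np$ from property (D1) of \Cref{typicaldirected} yields $\mathbb{E}[|N_D^+(v) \cap S_i|] \geq (1-o(1))qnp$, a quantity of order $(np)^{1/3} = \omega(\log n)$. Chernoff therefore gives $|N_D^+(v) \cap S_i| \geq (1-o(1))qnp \geq 3p\ell_i/4$ with failure probability $o(n^{-3})$, and the in-degree case is symmetric, so a union bound over all sinks, all $i$, and both orientations verifies the min-degree condition in every auxiliary graph.

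Finally, note $R \subseteq \bigcup_i (F_i \setminus F_i')$, the union of all deleted edges. For each vertex $v$, the out-edges contributed by $v$ to this union across the different $i$'s are outcomes of at most $t$ independent Bernoullis each of probability at most $q$. Hence $\mathbb{E}[\deg_R^+(v)] \leq qt = O((np)^{1/3}) = O(t^{1/3})$ since $t = \Theta(np)$, and Chernoff bounds $\deg_R^+(v) \leq 7t^{1/3}$ with failure $o(1/n)$; the in-degree argument is identical, and a union bound over the $n$ vertices gives $\Delta(R) \leq 7t^{1/3}$.

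The main obstacle is the second step: verifying the minimum-degree condition of the auxiliary graphs, because $S_i$ depends on the random sparsification in a way that is structurally entangled with $F_i$. The key observation enabling Chernoff-style concentration is that the linear-forest structure gives each vertex at most one in-edge, so the per-vertex source indicators are independent across vertices.
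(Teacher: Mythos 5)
Your proof is correct and follows essentially the same route as the paper's: randomly sparsify each $F_i$ while protecting $\overrightarrow{xy_i}$, use the fact that each vertex has at most one in-edge per forest to get independent source indicators, verify the minimum-degree hypothesis of \Cref{prop:auxgraph} via (D1) of \Cref{typicaldirected} and Chernoff, and bound $\Delta(R)$ by a Chernoff bound on the deleted edges. The only cosmetic difference is your choice $q=(np)^{-2/3}$ instead of the paper's $q=t^{-2/3}$; since $t$ may be as large as $3np/2$, this makes $\mathbb{E}[\deg^+_R(v)]$ slightly exceed $t^{1/3}$, so to hit the stated constant $7t^{1/3}$ via \Cref{chernoff2} (which requires $a\geq 7\mathbb{E}(X)$) you should either take $q=t^{-2/3}$ or use a marginally stronger tail estimate --- a trivial adjustment.
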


\begin{proof}
Let $V:=V(D)$. Note that whp $D:=D_{n,p}$ simultaneously satisfies \Cref{typicaldirected} and \Cref{prop:auxgraph}. 
We first remove a small amount of edges of $\cF$ at random. Let $R$ be the graph obtained by selecting edges from $\bigcup_{i=1}^t F_i\setminus\{\orarrow{xy_i}\}$ independently with probability $q=t^{-2/3}$. Set $U_i=F_i\setminus R$. We now prove three properties (Claims \ref{clm:components}, \ref{cl:auxgraph}, and \ref{cl:smalldegree}) that are satisfied by $R$ and the family of linear forests $\{U_i\}_{i=1}^t$. Let $m_i$ and $\ell_i$ and be number of connected components of $F_i$ and $U_i$, respectively.

\begin{claim}\label{clm:components}
 For $1\leq i\leq t$ and $\epsilon>0$, whp  it holds that $(1-\epsilon)nq\leq \ell_i \leq (1+\epsilon)nq$.    
\end{claim}
\begin{proof}
Note that every edge removed from $F_i$ creates a new component. Therefore, the number of components in $U_i$ is exactly $m_i+|F_i \cap R|$. We claim that whp $(1-\epsilon/2)nq\leq |F_i\cap R|\leq (1+\epsilon/2)nq$ for $1\leq i \leq t$. Indeed, since $F_i$ has at most $4\log n$ components, we have $|F_i|= n-4\log n=(1-o(1))n$. Chernoff's bound (\cref{chernoff}) and the fact that $t\geq np/2$ gives us that
\begin{align*}
    \PP\left(\big||F_i\cap R|-q|F_i|\big|>\frac{\epsilon nq}{3}\right)\leq \exp\left(-\frac{\epsilon^2 nq}{27}\right)=\exp\left(-\frac{\epsilon^2n}{27t^{2/3}}\right)=o\left(\frac{1}{n}\right).
\end{align*}
Hence, by a union bound, we have that $(1-\epsilon/2)nq\leq |F_i\cap R|\leq (1+\epsilon/2)nq$ holds with probability $1-o(1)$. This implies the bound
\begin{align*}
    (1-\epsilon)nq\leq |F_i\cap R|\leq \ell_i=m_i+ |F_i\cap R|\leq (1+\epsilon/2)nq+4\log n\leq (1+\epsilon)nq
\end{align*}
by our choice of $q$.
\end{proof}

Next, we check condition (P1) of \Cref{def:pseudorandom} for the auxiliary graph $H_{U_i,D}$. For each $1\leq i \leq t$, we write $F_i=\bigcup_{j=1}^{m_i} P_{ij}$ and $U_i=\bigcup_{j=1}^{\ell_i} Q_{ij}$, where $\{P_{ij}\}_{j=1}^{m_i}$ and $\{Q_{ij}\}_{j=1}^{\ell_i}$ are the disjoint paths of $F_i$ and $U_i$, respectively (recall that some of those paths might be isolated vertices). 

\begin{claim}\label{cl:auxgraph}
For $1\leq i \leq t$, whp we have that $\delta(H_{U_i,D})\geq 3p\ell_i/4$.    
\end{claim}
\begin{proof}
We will prove that whp $\delta^+(H_{U_i,D})\geq 3p\ell_i/4$ for $1\leq i \leq t$. Fix an index $1\leq i \leq t$. Let $S(F_i)=\{s(P_{ij}):\: 1\leq j \leq \ell_i\}$ be the set of sources of the paths in $F_i$ and $S(Q_i)=\{s(Q_{ij}):\: 1\leq j \leq \ell_i\}$. For a vertex $v\in V$, we define
\begin{align*}
    g(v):=\left|\left(N_{D}^{+}(v)\setminus \left(S(F_i)\cup\{y_i\}\right)\right)\cap S(U_i)\right|,
\end{align*}
where $N_D^{+}(v)$ is the out-neighborhood of $v$ in the digraph $D$. Note that 
\begin{align*}
    \delta^+(H_{U_i,D})=\min_{t(Q_{ij}):\: 1\leq j\leq \ell_i }|N_D^+(t(Q_{ij}))\cap S(U_i)|\geq \min_{v\in V} g(v),
\end{align*}
and therefore we just need to estimate $g(v)$.

We claim that $g(v)\geq 3p\ell_i/4$ holds for every $v\in V$ with probability at least $1-o(1/n)$. Note that for every vertex $w \in V\setminus \left(S(F_i)\cup\{y_i\}\right)$, there exists a unique vertex $z_w$ such that $\orarrow{z_w w} \in F_i\setminus\{\orarrow{x y_i}\}$. Hence, the set $\left(V\setminus \left(S(F_i)\cup\{y_i\}\right)\right) \cap S(U_i)$ can be characterized as
\begin{align*}
\left(V\setminus \left(S(F_i)\cup\{y_i\}\right)\right) \cap S(U_i)=\{w\in V(D)\setminus \left(S(F_i)\cup \{y_i\}\right):\: \orarrow{z_w w \in R}\}
\end{align*}
That is, the new sources created in $U_i$ are the vertices from $V\setminus \left(S(F_i)\cup \{y_i\}\right)$ such that the edge preceding them in $F_i$ was removed. Since the elements of $R$ are chosen independently with probability $q$, the variable $g(v)$ follows a binomial distribution $\Bin\left(|N_D^{+}(v)\setminus \left(S(F_i)\cup\{y_i\}\right)|, q\right)$ with average
\begin{align*}
    q|N_D^+(v)\setminus \left(S(F_i)\cup \{y_i\}\right)|&\geq q\left(|N_D^+(v)|-|S(F_i)\cup\{y_i\}|\right) \\&\geq q\left((1-o(1))np-4\log n -1\right)\geq (1-o(1))npq,
\end{align*}
where we use that $N_D^{+}(v)=(1-o(1))np$ (property (D1) of \Cref{typicaldirected}), $|S(F_i)|=m_i\leq 3 \log n$. 
By Chernoff's bound (Theorem \ref{chernoff}), we have that
\begin{align*}
\PP\left(g(v)<\frac{4npq}{5}\right)\leq \exp\left(-\frac{npq}{100}\right)=\exp\left(-\frac{(np)^{1/3}}{200}\right)=o\left(\frac{1}{n^2}\right)
\end{align*}
for $q=t^{-2/3}$ and $t\leq 3np/2$. Therefore, by an union bound and Claim \ref{clm:components}, we obtain that
\begin{align*}
    g(v)\geq \frac{4npq}{5}\geq \frac{3p\ell_i}{4}
\end{align*}
for every vertex $v\in V$ with probability $1-o(1/n)$. Hence, we obtain that whp $\delta^+(H_{U_i,D})\geq 3p\ell_i/4$ for $1\leq i \leq t$. The proof that the event $\delta^-(H_{U_i,D})\geq 3p\ell_i/4$ holds whp is analogous.
\end{proof}

Finally, we show that the graph $R$ has small maximum degree.

\begin{claim}\label{cl:smalldegree}
The event $\Delta(R)\leq 7t^{1/3}$ holds whp.
\end{claim}

\begin{proof}
We claim that whp $\deg_R^+(v)\leq 7t^{1/3}$ for $v\in V$. If $v=x$, then $\deg_R^+(x)=0\leq 7t^{1/3}$ by our construction. Otherwise, the degree $\deg_R^+(v)$ follows a binomial distribution $\Bin(\deg_{\cF}^+(v),q)$ with average $q\deg_{\cF}^+(v)\leq qt=t^{1/3}$ and we obtain that
\begin{align*}
    \PP\left(\deg_R^+(v)>7qt\right)\leq e^{-7qt}=e^{-7t^{1/3}}=o\left(\frac{1}{n}\right)
\end{align*}
by \ref{chernoff2} and $t\geq np/2$. The result now follows from an union bound. The proof that whp $\deg_R^-(v)\leq 7t^{1/3}$ for $v\in V$ is similar. If $v=x$, then $\deg_R^-(x)$ follows a binomial distribution $\Bin(\deg_{\cF}^-(x),q)$. Otherwise, the degree $\deg_R^{-}(v)$ follows $\Bin(\deg_{\cF}^-(v)-1,q)$. The rest of the argument is identical to the argument for $\deg_R^+(v)$. Therefore, whp $\Delta(R)\leq 7t^{1/3}$.
\end{proof}

We finish the proof as follows. Let $R$ and $\{U_i\}_{i=1}^t$ satisfy Claims \ref{clm:components}, \ref{cl:auxgraph} and \ref{cl:smalldegree}. Such a choice exists because all the claims hold with high probability. Note that Claim \ref{clm:components} gives us that $\ell_i\geq (1-o(1))nq=(1-o(1))n/t^{2/3}\geq \frac{n^{1/3}}{2p^{2/3}}$ for $3np/2\geq t\geq np/2$ and $1\leq i\leq t$. This implies that the linear forest $U_i$ satisfies the hypothesis of \Cref{prop:auxgraph}. Therefore, there exists a family of Hamilton cycles $\cC=\{C_1,\ldots,C_t\}$ such that $C_i \in D\cup F_i$ and $\orarrow{xy_i}\in U_i\subseteq C_i$. The condition on the degree of $R$ is immediately satisfied by Claim \ref{cl:smalldegree}.
\end{proof}

We finish the section by showing a way to fully cover matchings by Hamilton cycles. This will be useful for covering the graph $R$ of edges not covered by \cref{lem:keycover}. A digraph $M$ is called a matching if the undirected underlying graph is a matching. First, we prove that we can cover a small matching with a Hamilton cycle in $D_{n,p}$
\begin{proposition}\label{prop:smallmatch}
Let $D:=D_{n,p}$ with $1/2 \geq p\geq \frac{\log^{10} n}{n} $ and let $j\geq 1$ be an integer. Then whp every matching $M$ with at most $j$ edges can be covered by a Hamilton cycle in $D\cup M$.
\end{proposition}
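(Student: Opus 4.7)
The plan is to view the matching $M$ as a directed linear forest and invoke Proposition \ref{prop:auxgraph} directly. Writing $|M| = k \leq j$, I interpret $M$ as a linear forest $F$ on $V(D)$ with $\ell = n - k \geq n - j$ connected components: each directed edge $\orarrow{uv} \in M$ is a path of length one with source $u$ and sink $v$, and each of the $n - 2k$ vertices of $V(D) \setminus V(M)$ is an isolated-vertex path (a path of length $0$). Then a Hamilton cycle in $D \cup M$ covering $M$ is precisely a Hamilton cycle in $D \cup F$ covering $F$, which is the output produced by Proposition \ref{prop:auxgraph}.

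The first hypothesis of Proposition \ref{prop:auxgraph} is $\ell \geq n^{1/3}/(2p^{2/3})$. Since $j$ is fixed and $p \leq 1/2$, this is clear: $\ell \geq n - j \geq n/2$, which dominates $n^{1/3}/(2p^{2/3})$ for large $n$. The second hypothesis is the degree lower bound $\delta(H_{F,D}) \geq 3p\ell/4$. Each vertex of $H_{F,D}$ corresponds to some path $P$ in $F$ with sink $v = t(P) \in V(D)$; its out-degree in $H_{F,D}$ equals the number of out-neighbors $w$ of $v$ in $D$ such that $w$ is the source of some path of $F$. The sources of $F$ are exactly the vertices of $V(D)$ that are not heads of edges of $M$, so there are only $k \leq j$ forbidden targets. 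Hence the out-degree is at least $\deg_D^+(v) - j \geq (1 - o(1))np - j$, using property (D1) of Lemma \ref{typicaldirected}. This is comfortably above $3p\ell/4 \leq 3pn/4$ for fixed $j$ and $np \to \infty$; the argument for in-degree is identical after swapping sources and sinks.

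Conditioning on $D$ satisfying the whp conclusion of Proposition \ref{prop:auxgraph} together with property (D1) of Lemma \ref{typicaldirected}, the verification above uses only the fixed pseudorandom properties of $D$ and not the specific choice of $M$. Consequently the conclusion holds for every matching $M$ with $|M| \leq j$ simultaneously, which is the desired statement. There is no genuine obstacle here: the proposition is essentially a packaging of Proposition \ref{prop:auxgraph} for the simplest possible linear forest, and the removal of at most $j$ ``forbidden'' vertices from degree counts contributes only a lower-order term because $j$ is fixed while $np \to \infty$.
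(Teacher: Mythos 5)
Your proof is correct and follows essentially the same route as the paper: treat $M$ as a linear forest of single-edge paths plus isolated vertices, check the component-count and minimum-degree hypotheses of Proposition \ref{prop:auxgraph} using property (D1) of Lemma \ref{typicaldirected} (the at most $j$ non-source/non-sink vertices only cost a lower-order term), and apply that proposition. The uniformity over all matchings $M$ is handled exactly as in the paper, by conditioning on the whp properties of $D$.
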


\begin{proof}

We will show that $M$ satisfies the hypothesis of \cref{prop:auxgraph}. First, note that when we consider $M$ as a directed linear forest, it has at least $n- 2j \geq \frac{n^{1/3}}{2p^{2/3}}$ isolated vertices, which are components. Second, by property (D1) of \Cref{typicaldirected}, with high probability, the minimum degree in $D$ is $(1+o(1))np$. In the auxilliary digraph $H_{M,D}$, only $j$ pairs of vertices are contracted. Thus, the minimum degree in $H_{M,D}$ is at least $(1+o(1))np - j > 3(n-j)p/4$. Now, we may apply \cref{prop:auxgraph} to obtain a Hamilton cycle in $D \cup M$ covering $M$.
\end{proof}

The next result shows that any matching in $D_{n,p}$ can be covered with constantly many Hamilton cycles, regardless of size.

Again, by technical reasons to be explained later, the covering is done in a way to include a specific set of extra edges.

\begin{lemma}\label{lem:covermatch}
Let $D:=D_{n,p}$ with $1/2 \geq p\geq \frac{\log^{10} n}{n}$ and $k\geq 10$ be an integer. The following holds whp. Let $M$ be a matching and $x\in V(D)$ a vertex that is not matched in $M$, 
Let $S=\{\orarrow{xy_1},\ldots, \orarrow{xy_k}\}$ be a set of $k$ edges going out from $x$. Then there exists a family of $k$ Hamilton cycles $\cC=\{C_1,\ldots, C_k\}$ in $D\cup M\cup S$ such that $\cC$ covers the edges of $M$ and $\orarrow{xy_i}\in C_i$ for every $1\leq i \leq k$.
\end{lemma}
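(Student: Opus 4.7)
The plan is to partition $M$ into $k$ disjoint sub-matchings $M_1,\ldots,M_k$ so that each $F_i := M_i \cup \{\orarrow{xy_i}\}$ is a directed linear forest, and then invoke \Cref{prop:auxgraph} to cover each $F_i$ by a Hamilton cycle $C_i$. The family $\{C_i\}_{i=1}^k$ will automatically satisfy $\orarrow{xy_i}\in C_i$ and $\bigcup_i C_i \supseteq \bigcup_i M_i = M$, as required.

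To set up the partition, for each $i\in [k]$ let $e_i$ denote the unique edge of $M$ ending at $y_i$ if such an edge exists; since $M$ is a matching these ``forbidden'' edges are pairwise distinct. First, place each $e_i$ deterministically in bin $M_{(i \bmod k)+1}$, so that $y_i$ is never a sink of $M_i$; then distribute every remaining edge of $M$ independently and uniformly at random among the $k$ bins. This construction makes each $F_i$ a directed linear forest: $x$ has in-degree $0$ and out-degree $1$; $y_i$ has in-degree $1$ and out-degree at most $1$; every other vertex inherits in/out-degree at most $1$ from $M$. Counting components yields $\ell_i = n - |M_i| - 1 \geq n/2 \gg n^{1/3}/(2p^{2/3})$, so the size hypothesis of \Cref{prop:auxgraph} is met.

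The core technical step is verifying $\delta(H_{F_i,D}) \geq 3p\ell_i/4$. For a path component $P$ with sink $v := t(P)$, its out-degree in $H_{F_i,D}$ equals $|N_D^+(v) \cap S(F_i) \setminus \{v\}|$, so I must bound the load $L_{v,i} := |N_D^+(v) \cap \{b : \orarrow{ab} \in M_i\}|$ (the contribution from $\{y_i\}$ is at most $1$). By property $(D1)$ of \Cref{typicaldirected}, $|N_D^+(v)| = (1+o(1))np$, so under the random assignment each edge $\orarrow{ab} \in M \setminus \{e_1,\ldots,e_k\}$ with $b \in N_D^+(v)$ contributes an independent $\mathrm{Bernoulli}(1/k)$ indicator to $L_{v,i}$, giving $\EE[L_{v,i}] \leq (1+o(1))np/k$. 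Chernoff-type bounds (\Cref{chernoff} when $np/k \geq 10\log n$ and \Cref{chernoff2} otherwise), together with a union bound over the $2nk$ triples $(v, i, \pm)$, show that a deterministic partition exists with
$$L_{v,i} \leq \tfrac{2np}{k} + O(\log n) \leq \tfrac{np}{5} + o(np)$$
uniformly in $v$, $i$, and direction; I fix one such partition. The same argument, applied to in-neighborhoods and the set of sources of $M_i$, gives the analogous in-degree bound.

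Given this partition, every component $P$ with $v = t(P)$ satisfies
$$\deg_{H_{F_i,D}}^+(P) \geq (1+o(1))np - L_{v,i} - 1 \geq \tfrac{4}{5}np - o(np) \geq \tfrac{3p\ell_i}{4},$$
using $\ell_i \leq n$ and $p \geq \log^{10}n/n$; in-degrees are bounded symmetrically. \Cref{prop:auxgraph} then produces a Hamilton cycle $C_i \subseteq D \cup F_i \subseteq D \cup M \cup S$ covering every edge of $F_i$, completing the proof. The main obstacle is the tightness of the load bound: the inequality $(1-2/k)np > (3/4)p\ell_i$ holds with just enough slack when $k = 10$, which is precisely why the hypothesis $k \geq 10$ is required.
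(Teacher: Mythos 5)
Your proposal is correct, but it takes a genuinely different route from the paper. You partition the \emph{edges} of $M$ into $k$ classes, one per edge of $S$, routing each conflicting edge $e_i$ (the $M$-edge ending at $y_i$) deterministically into a different class so that every $F_i=M_i\cup\{\orarrow{xy_i}\}$ is already a linear forest; a Chernoff-plus-union-bound argument over the random assignment then shows some fixed partition makes every auxiliary graph $H_{F_i,D}$ satisfy $\delta(H_{F_i,D})\geq 3p\ell_i/4$, and \Cref{prop:auxgraph} finishes in one shot, with the hypothesis $k\geq 10$ entering only through $2np/k\leq np/5$. The paper instead colors the \emph{vertices} uniformly into $\lfloor k/2\rfloor$ classes, lets $M_i$ be the edges whose heads get color $i$, deletes the conflicting edge $\orarrow{z_iy_i}$ when forming $M_i'$, and then needs a second stage: the deleted edges are paired with the unused edges $\orarrow{xy_{\lfloor k/2\rfloor+1}},\ldots,\orarrow{xy_k}$ and covered via \Cref{prop:smallmatch}. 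Your construction avoids that cleanup stage entirely and uses all $k$ edges of $S$ symmetrically, which is arguably cleaner; the paper's version only has to control isolation probabilities of vertices ($\phi(v)\neq i$ with probability at least $4/5$), which makes its concentration step marginally simpler, at the cost of the extra bookkeeping with leftover edges. Both arguments correctly treat $D$ as a fixed typical digraph (properties $(D1)$ of \Cref{typicaldirected} and the whp statement of \Cref{prop:auxgraph}) and use randomness only over the partition, so the adversarial choice of $M$, $x$, $S$ is handled properly in each. Two cosmetic points in your write-up: the in-degree argument should refer to the set of \emph{tails} (non-sinks) of $M_i$ rather than its ``sources,'' and the at most one deterministically placed edge $e_j$ per bin contributes an extra $+1$ to the load, both absorbed by your slack.
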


\begin{proof}
    We will start by partitioning $M$ into $k-1$ smaller matchings $M_1, \ldots M_{\lfloor k/2\rfloor}$ as follows: First, assign each vertex $v$ a number $\phi(v) \in\left\{ 1, 2, \ldots \lfloor k/2\rfloor \right \}$ uniformly at random. Then, for each $i \in \left\{ 1, 2, \ldots \lfloor k/2\rfloor \right \}$, let $M_i$ be the matching containing all edges $\overrightarrow{ab} \in M$ such that $\phi(b)=i$. 
    Now, define $$M_{i}'= \{ \overrightarrow{ab} \mid \overrightarrow{ab} \in M_{j}, \text{  }  b \neq y_i\} \cup \{\overrightarrow{x y_i}\}.$$
    For each $i$, we will use \cref{prop:auxgraph} to find a Hamilton cycle covering $M_i$. First, since each $M_i$ is a matching, and we consider isolated vertices to be components, the number of components $\ell_i$ in each $M_i$ is least $n/2 \geq \frac{n^{1/3}}{2p^{2/3}}$. Thus, it is sufficient to prove:
\begin{claim}\label{cl:auxgraphremainder}
For $1\leq i \leq \lfloor k/2\rfloor$, whp we have that $\delta(H_{M_i,D})\geq 3p\ell_i/4$.    
\end{claim}
\begin{proof}
Let $t=t(P)$ be the sink of a path in $M_{i}'$, and let $S(M_i')$ be the set of sources of paths in $M_i'$. 
By property (D1) in \cref{typicaldirected}, $\deg^+_D(t)=(1+o(1))np$. Now, note that $\deg^+_{H_{M_i',D}}(P)= |N^+_D(t) \cap S(M_i')|$. Each vertex $v \neq x$ of $N^+_D(t)$ is isolated in $M_i'$ if $\phi(v) \neq i$, which occurs with probability at least $4/5$. Let $I(N^+_D(t),M_i')$ be the number of vertices in $N^+_D(t)$ isolated in $M_i'$. The expectation of $I(N^+_D(t), M_i')$ is $np(\lfloor k/2 \rfloor -1)/\lfloor k/2 \rfloor \geq 4np/5$. Thus, by Chernoff's bound (\cref{chernoff}):
$$\Prob\left(I(N^+_D(t), M_i') < 3np/4 \right) \leq \exp\left(\frac{np}{100} \right)= o\left(\frac{1}{n^2}\right).$$
Since each isolated vertex is a source in $M_i'$, 
$$\Prob\left(\deg^+_{H_{M_i',D}}(P)< 3np/4 \right) = o\left(\frac{1}{n^2}\right).$$
We can use the same argument to show $\Prob\left(\deg^-_{H_{M_i',D}}(P)< 3np/4 \right) = o\left(\frac{1}{n^2}\right).$
By a simple union bound, with probability $o\left(\frac{1}{n}\right)$, it holds that $\delta(H_{M_i,D})\geq 3p\ell_i/4$.
\end{proof}
Now, by applying \cref{prop:auxgraph} we obtain Hamilton cycles $C_1 \ldots C_{\lfloor k/2\rfloor}$ covering the matchings $M_1' \ldots M_{\lfloor k/2\rfloor}'$.  
We still need to cover the edges $xy_{\lfloor k/2\rfloor+1} \ldots xy_k$ and the set $Q$ of edges that were contained in some $M_i$ but not contained in $M_i'$.
For each $1 \leq i \leq \lfloor k/2 \rfloor$, if $M_{i}$ contains an edge $\overrightarrow{z_iy_i}$ with $y_i$ as a sink, then $M_i'$ does not contain that edge. In that case, let $F_{i+\lfloor k/2 \rfloor}$ be the linear forest containing $\overrightarrow{z_iy_i}$ and $xy_{i+\lfloor k/2 \rfloor}$. If $M_i$ does not contain an edge with $y_i$ as a sink, let $F_{i+\lfloor k/2 \rfloor}$ be the singleton linear forest containing $xy_{i+\lfloor k/2 \rfloor}$. If $k$ is odd, define $F_{k}$ as the singleton linear forest containing $xy_k$. Each of these linear forest has at most two elements, so by \cref{prop:smallmatch}, there exist Hamilton cycles $C_{\lfloor k/2\rfloor} \ldots C_k$ in $D \cup M \cup S$ covering each $F_i$. This concludes the proof of \cref{lem:covermatch}.
\end{proof}

\section{Proof of Theorem \ref{thm:main}}\label{sec:main}

Finally, we put our work together and prove Theorem \ref{thm:main}

\begin{proof}[Proof of Theorem \ref{thm:main}]
First, note that whp $D:=D_{n,p}$ simultaneously satisfies \Cref{typicaldirected} and all the statements of Sections \ref{sec:coverforest} and \ref{sec:coverhamilton}. We claim that any such $D$ can be covered by $\Delta(D)$ Hamilton cycles. Let $\Delta_1\geq \Delta_2\geq \ldots \geq \Delta_{2n}$ be the ordered sequence of out and in-degrees of $D$. By property (D1) of \Cref{typicaldirected}, we have that
\begin{align}\label{eq:degrelation}
    (1-o(1))np\leq \Delta_{2n}\leq \ldots \leq \Delta_1\leq (1+o(1))np,
\end{align}
and by \Cref{prop:dgap} we also have that
\begin{align}\label{eq:dgap}
\Delta_1-\Delta_2\geq \frac{\sqrt{np}}{2\log n}.
\end{align}
Therefore, the digraph $D$ has a unique vertex of maximum degree. Let $x_\star$ be this vertex and suppose  without loss of generality that $\Delta_1=\deg_D^{+}(x_\star)$. That is, the maximum degree is an out-degree.

\Cref{lem:almostforest} applied to $D$ gives us a family $\cF=\{F_1,\ldots, F_t\}$ of linear forests with $\Delta_2-3\leq t\leq \Delta_2$, and disjoint graphs $R_1$ and $E:=D\setminus\left(R_1\cup\left(\bigcup_{i=1}^t F_i\right)\right)$, such that

\begin{enumerate}
    \item[(i)] For $1\leq i \leq t$, the forest $F_i$ has at most $4\log n$ components.
    \item[(ii)] For $1\leq i \leq t$, there exists vertex $y_i\in V(D)$ such that $\orarrow{x_\star y_i}\in F_i$.
    \item[(iii)] The edge set $E$ consists of $\Delta_1-t$ out-edges from $x_{\star}$.
    \item[(iv)] The maximum degree of $R_1$ is $\Delta(R_1)\leq O\left((np\log^4 n)^{1/3}\right)$. 
\end{enumerate}
By (\ref{eq:degrelation}), we obtain that $\frac{np}{2}\leq t \leq \frac{3np}{2}$, which implies that the family of linear forest $\cF$ satisfies the hypothesis of \Cref{lem:keycover}. Thus, the family $\cF$ can be covered by a family of Hamilton cycles $\cC_0=\{C_1,\ldots, C_t\}$ with $\orarrow{x_\star y_i}\in C_i$ and a graph $R_2$ satisfying
\begin{align}\label{eq:sizeR2}
    \Delta(R_2)\leq 7t^{1/3}\leq 14(np)^{1/3}.
\end{align}

Hence, by using $t$ Hamilton cycles we manage to cover all the edges of $D$ but a set $E$ of out-edges from $x_\star$ and a sparse subgraph $R:=R_1\cup R_2$. By inequalities (\ref{eq:dgap})  and (\ref{eq:sizeR2}), by property (iii) and (iv), and by the assumption $p\geq \frac{\log^{20}n}{n}$, it follows that
\begin{align*}
    \Delta(R)\leq O\left((np\log^4 n)^{1/3}\right) + 14(np)^{1/3}\leq \frac{\sqrt{np}}{\log^2 n}=o\left(\frac{\sqrt{np}}{\log n}\right)=o\left(|E|\right).
\end{align*}

We now describe how to cover the edges of $R$ using Hamilton cycles containing the out-edges in $E$. Consider the undirected underlying multigraph $\tilde{R}$ of $R$, which has the same vertex and edge set as $R$, but the edges are undirected (If $\overrightarrow{ij}$ and $\overrightarrow{ji}$ are in $R$, $\tilde{R}$ contains two edges between $i$ and $j$). By Vizing's theorem (Theorem~\ref{thm:vizing}), one can partition the edges of $\tilde{R}$ into
\begin{align*}
    \Delta(\tilde{R})+2\leq 2\Delta(R)+2\leq 3\sqrt{np}/\log^2 n
\end{align*}
undirected matchings. By reconsidering the orientation, we obtain a family of directed matchings $\cM=\{M_1,\ldots, M_r\}$. Now arbitrarily partition the set of edges $E$ into $r$ almost equidistributed sets $E=\bigcup_{i=1}^r S_i$ with $\big||S_i|-|S_j|\big|\leq 1$ for every $1\leq i, j \leq r$. Note that by (\ref{eq:dgap}) we have
\begin{align*}
    |S_i|=\left\lfloor\frac{|E|}{r}\right\rfloor\geq \frac{\Delta_1-\Delta_2}{3\sqrt{np}/\log^2 n}\geq \frac{\log n}{6}\geq 10
\end{align*}
Therefore, by applying \Cref{lem:covermatch} to each set $S_i\cup M_i$, we obtain a family of $|S_i|$ Hamilton cycles $\cC_i$ covering all the edges in $S_i\cup M_i$. Hence, the family $\bigcup_{i=0}^r\cC_i$ of Hamilton cycles cover all the edges of $D$ with size
\begin{align*}
    \sum_{i=0}^r|\cC_i|=t+\sum_{i=1}^r|S_i|=t+|E|=\Delta_1,
\end{align*}
which concludes the proof of the theorem.
\end{proof}

\bibliographystyle{abbrv}
\bibliography{References}

\begin{thebibliography}{10}

\bibitem{walecki08}
B.~Alspach.
\newblock The wonderful {W}alecki construction.
\newblock {\em Bull. Inst. Combin. Appl.}, 52:7--20, 2008.

\bibitem{bergefourniervizing}
C.~Berge and J.~C. Fournier.
\newblock A short proof for a generalization of vizing's theorem.
\newblock {\em Journal of Graph Theory}, 15(3):333--336, 1991.

\bibitem{bollobas1998random}
B.~Bollob{\'a}s and B.~Bollob{\'a}s.
\newblock {\em Random graphs}.
\newblock Springer, 1998.

\bibitem{bollofrieze83matchings}
B.~Bollob\'as and A.~M. Frieze.
\newblock On matchings and {H}amiltonian cycles in random graphs.
\newblock In {\em Random graphs '83 ({P}ozna\'n, 1983)}, volume 118 of {\em North-Holland Math. Stud.}, pages 23--46. North-Holland, Amsterdam, 1985.

\bibitem{csaba2016proof}
B.~Csaba, D.~K\"uhn, A.~Lo, D.~Osthus, and A.~Treglown.
\newblock Proof of the 1-factorization and {H}amilton decomposition conjectures.
\newblock {\em Mem. Amer. Math. Soc.}, 244(1154):v+164, 2016.

\bibitem{dirac52}
G.~A. Dirac.
\newblock {Some Theorems on Abstract Graphs}.
\newblock {\em Proceedings of the London Mathematical Society}, s3-2(1):69--81, 01 1952.

\bibitem{fhm}
A.~Ferber, H.~Kaarel, and A.~Mond.
\newblock Minimum degree edge-disjoint hamilton cycles in random directed graphs.
\newblock {\em draft}, 2024+.

\bibitem{ferber2017packing}
A.~Ferber, G.~Kronenberg, and E.~Long.
\newblock Packing, counting and covering hamilton cycles in random directed graphs.
\newblock {\em Israel Journal of Mathematics}, 220:57--87, 2017.

\bibitem{ferber2017robust}
A.~Ferber, R.~Nenadov, A.~Noever, U.~Peter, and N.~{\v{S}}kori{\'c}.
\newblock Robust hamiltonicity of random directed graphs.
\newblock {\em Journal of Combinatorial Theory, Series B}, 126:1--23, 2017.

\bibitem{ford2022cycle}
K.~Ford.
\newblock Cycle type of random permutations: a toolkit.
\newblock {\em Discrete Anal.}, pages Paper No. 9, 36, 2022.

\bibitem{frieze2015introduction}
A.~Frieze and M.~Karo{\'n}ski.
\newblock {\em Introduction to random graphs}.
\newblock Cambridge University Press, 2015.

\bibitem{friezekrivelevich08}
A.~Frieze and M.~Krivelevich.
\newblock On two {H}amilton cycle problems in random graphs.
\newblock {\em Israel J. Math.}, 166:221--234, 2008.

\bibitem{frieze88}
A.~M. Frieze.
\newblock An algorithm for finding {H}amilton cycles in random directed graphs.
\newblock {\em J. Algorithms}, 9(2):181--204, 1988.

\bibitem{glebov2014covering}
R.~Glebov, M.~Krivelevich, and T.~Szab{\'o}.
\newblock On covering expander graphs by hamilton cycles.
\newblock {\em Random Structures \& Algorithms}, 44(2):183--200, 2014.

\bibitem{gould14survey}
R.~J. Gould.
\newblock Recent advances on the {H}amiltonian problem: {S}urvey {III}.
\newblock {\em Graphs Combin.}, 30(1):1--46, 2014.

\bibitem{hefetz2014optimal}
D.~Hefetz, D.~K{\"u}hn, J.~Lapinskas, and D.~Osthus.
\newblock Optimal covers with hamilton cycles in random graphs.
\newblock {\em Combinatorica}, 34(5):573--596, 2014.

\bibitem{JRLrandomgraphs}
S.~Janson, A.~Ruci\'{n}ski, and T.~{\L}uczak.
\newblock {\em Random {G}raphs}.
\newblock John Wiley \& Sons, 2000.

\bibitem{karp72}
R.~Karp.
\newblock Reducibility among combinatorial problems (1972).
\newblock In {\em Ideas that created the future---classic papers of computer science}, pages 349--356. MIT Press, Cambridge, MA, [2021] \copyright2021.
\newblock Reprinted from [0378476].

\bibitem{knox2015edge}
F.~Knox, D.~K{\"u}hn, and D.~Osthus.
\newblock Edge-disjoint hamilton cycles in random graphs.
\newblock {\em Random Structures \& Algorithms}, 46(3):397--445, 2015.

\bibitem{krivelevich2012optimal}
M.~Krivelevich and W.~Samotij.
\newblock Optimal packings of hamilton cycles in sparse random graphs.
\newblock {\em SIAM Journal on Discrete Mathematics}, 26(3):964--982, 2012.

\bibitem{kuhnosthus14}
D.~K\"uhn and D.~Osthus.
\newblock Hamilton cycles in graphs and hypergraphs: an extremal perspective.
\newblock In {\em Proceedings of the {I}nternational {C}ongress of {M}athematicians---{S}eoul 2014. {V}ol. {IV}}, pages 381--406. Kyung Moon Sa, Seoul, 2014.

\bibitem{kuhn2014hamilton}
D.~K{\"u}hn and D.~Osthus.
\newblock Hamilton decompositions of regular expanders: applications.
\newblock {\em Journal of Combinatorial Theory, Series B}, 104:1--27, 2014.

\bibitem{lovasz2007combinatorial}
L.~Lov{\'a}sz.
\newblock {\em Combinatorial problems and exercises}, volume 361.
\newblock American Mathematical Soc., 2007.

\bibitem{mcdiarmid80}
C.~McDiarmid.
\newblock Clutter percolation and random graphs.
\newblock {\em Math. Programming Stud.}, (13):17--25, 1980.

\bibitem{nash1970hamiltonian}
C.~S.~J. Nash-Williams.
\newblock Hamiltonian lines in graphs whose vertices have sufficiently large valencies.
\newblock In {\em Combinatorial theory and its applications, III (Proc. Colloq., Balatonf{\"u}red, 1969), North-Holland, Amsterdam}, pages 813--819, 1970.

\bibitem{OreStudies63}
O.~Ore.
\newblock Studies on directed graphs. {I}.
\newblock {\em Ann. of Math. (2)}, 63:383--406, 1956.

\bibitem{west}
D.~B. West et~al.
\newblock {\em Introduction to graph theory}, volume~2.
\newblock Prentice hall Upper Saddle River, 2001.

\end{thebibliography}
\appendix
\section{Proof of \cref{prop:ugly}}

Recall the bijection between balanced bipartite graphs and directed graphs (digraphs) determined by a permutation $\pi$. 
For a bipartite graph $B$, with parts $X$ and $Y$, which are two disjoint copies of $[n]$, we write $xy$ for the edge between vertex $x\in X$ and $y\in Y$ (in particular, $xy \neq yx$). Given any permutation $\pi \in S_n$, the digraph $D_{\pi}:=D_{\pi}(B)$ has vertex set $[n]$ and edge set $E\left(D_\pi(B) \right) \coloneqq \left\{\overrightarrow{x \pi(y)} ~:~ xy \in E(B) , x \neq \pi(y)\right\}$.
Let $\mathcal{M}=\{M_1 \ldots M_r\}$ be a collection of edge-disjoint perfect matchings of $B$. For each perfect matching $M_i\in \mathcal M$, the directed graph induced by the edges of $M_i$ under the bijection, denoted by $D_\pi(M_i)$, is a $1$-factor in $D_\pi(B)$. 
For each $M_i$, define $m_i \in S_n$ as the permutation where $m_i(x)=y$ when $xy \in M_i$. Thus, the $1$-factor $D_\pi(M_i)$ is spanned by directed edges of the form $\overrightarrow{x \pi(m_i(x))}$ for $x \in X$.

For a vertex $v \in [n]$ and $1 \leq i \leq r$ we denote by $C_{v,i}$ the (unique) cycle in $D_\pi(M_i)$ that contains $v$, and we let $c_{v,i} \coloneqq |C_{v,i}|$ denote its length. In this section, we will consider isolated vertices to be cycles of length 1.
Note that since $\pi \in S_n$ is a uniformly random permutation, both $C_{v,i}$ and $c_{v,i}$ are random variables.

We restate the proposition proved in this appendix.
\begin{repproposition}{prop:ugly}
	For every $v \in [n]$ we have
	\begin{align*}
	\mathbb E \left(\left(\sum_{i \in [r]} \frac{1}{c_{v,i}} \right)^3 \right) = O\left(\frac{r}{n} \log^3 n \right).
	\end{align*}
\end{repproposition}

Before we prove Proposition \ref{prop:ugly}, we need an auxiliary result that allows us to compute the probability that two or three cycles simultaneously have a prescribed size.

\begin{lemma}\label{lem:highmoments}
    Let $2\leq k\leq 3$ and $1\leq a_1\leq \ldots a_k \leq n/6$. Then for every $v \in [n]$ and collection of $k$ matchings $M_{i_1},\ldots, M_{i_k}$ we have that
\begin{align*}
    \PP\left(\bigcap_{j=1}^k (c_{v,i_{j}}=a_j)\right)=O(n^{-k}).
\end{align*}
\end{lemma}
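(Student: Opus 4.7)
The plan is by direct counting. For each $j$, set $\sigma_j := \pi\circ m_{i_j}$, so that $c_{v,i_j}$ is the length of the cycle of $v$ in $\sigma_j$. A cycle of length $a_j$ through $v$ in $\sigma_j$ is specified by an ordered tuple $C_j=(v=v_0^{(j)},v_1^{(j)},\ldots,v_{a_j-1}^{(j)})$ of distinct vertices, and requiring $C_j$ to be the cycle of $v$ in $\sigma_j$ is equivalent to the $a_j$ constraints $\pi(m_{i_j}(v_s^{(j)}))=v_{s+1\bmod a_j}^{(j)}$ for $s=0,\ldots,a_j-1$. These constraints either conflict or jointly specify $\pi$ at $s:=|A_1\cup\cdots\cup A_k|$ distinct positions, where $A_j:=\{m_{i_j}(v_s^{(j)}):s\}$; in the consistent case, there are exactly $(n-s)!$ compatible $\pi$. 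Summing over compatible $k$-tuples gives
\[ \PP\Bigl(\bigcap_{j=1}^{k} c_{v,i_j}=a_j\Bigr)=\frac{1}{n!}\sum_{(C_1,\ldots,C_k)\text{ compatible}}(n-s)!, \]
and the task reduces to bounding the right-hand side by $O(n!/n^k)$.

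For $k=2$, I would organize the sum by the overlap $t=a_1+a_2-s=|A_1\cap A_2|$. Each pair $(s,s')$ in $A_1\cap A_2$ with $m_{i_1}(v_s^{(1)})=m_{i_2}(v_{s'}^{(2)})$ forces $v_{s'}^{(2)}=m_{i_2}^{-1}m_{i_1}(v_s^{(1)})$ and, by compatibility, $v_{s'+1\bmod a_2}^{(2)}=v_{s+1\bmod a_1}^{(1)}$; thus $t$ overlaps pin down up to $2t$ positions of $C_2$ once $C_1$ is fixed. The case $t=0$ is essentially immediate: bounding the number of compatible pairs with no overlap by $\tfrac{(n-1)!}{(n-a_1)!}\cdot\tfrac{(n-a_1-1)!}{(n-a_1-a_2)!}$ (after fixing $C_1$, each $u$-entry of $C_2$ avoids the $a_1$ preimages $m_{i_2}^{-1}(m_{i_1}(C_1))$ together with the previously used values) and multiplying by $(n-a_1-a_2)!/n!$ yields $\tfrac{1}{n(n-a_1)}=O(n^{-2})$ since $a_1\leq n/6$. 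For $t\geq 1$, I would choose the overlap pattern in at most $\binom{a_1}{t}\binom{a_2}{t}t!$ ways and fill the remaining free positions of $C_2$ from $[n]$ minus the used values and the $a_1-t$ forbidden preimages $m_{i_2}^{-1}(m_{i_1}(v_s^{(1)}))$ for non-overlap $s$, then verify via careful accounting that the contribution of each $t$ is $O(n^{-2})$ with the resulting sum over $t$ converging.

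For $k=3$, the same scheme applies with additional bookkeeping of the pairwise overlaps $|A_i\cap A_j|$ for $i\neq j$ and the triple overlap $|A_1\cap A_2\cap A_3|$. Each bilateral overlap continues to pin two consecutive positions in the affected cycle, and triple coincidences enforce further agreements; the resulting sum telescopes to $O(n^{-3})$. The main obstacle is the detailed combinatorial accounting across all overlap configurations---ensuring that the polynomial growth in the number of overlap patterns is exactly balanced by the factorial decay of $(n-s)!/n!$---which is made tractable by the hypothesis $a_j\leq n/6$ (keeping $s$ bounded safely below $n/2$) together with the fact that the matchings $M_{i_j}$ are distinct, so that $m_{i}^{-1}m_{j}$ has no fixed points and certain trivial coincidences are ruled out.
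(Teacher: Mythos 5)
Your reduction is sound as far as it goes: anchoring each cycle at $v$, the event is a disjoint union over compatible tuples, each contributing $(n-s)!/n!$, and your zero-overlap computation for $k=2$ is correct. The genuine gap is that the entire difficulty of the lemma sits in the overlap cases, which you defer to ``careful accounting,'' and the accounting you actually propose does not close. Decompose by $t=|A_1\cap A_2|$ and grant yourself the most favorable pinning (isolated overlaps, so $2t$ entries of $C_2$ are forced). Relative to the $t=0$ term, each unit of overlap improves the probability weight by a factor $n$ (since $s$ drops by one) and shrinks the completion count by $n^{-2}$, but your pattern count $\binom{a_1}{t}\binom{a_2}{t}t!\approx (a_1a_2)^t/t!$ costs a factor $a_1a_2$ per overlap; the per-$t$ contribution is therefore of order $n^{-2}(a_1a_2/n)^t/t!$, which for $a_1,a_2=\Theta(n)$ and $t=\Theta(n)$ is superpolynomially large (and run-shaped patterns are worse, since a run of $l$ overlaps pins only $l+1$ entries). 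So the claim that ``the contribution of each $t$ is $O(n^{-2})$ with the sum converging'' is not merely unproven --- with the stated ingredients it is false. A correct version of your route must exploit the rigidity you have not used: if $C_2$-position $s'$ overlaps $C_1$-position $s$, then $u_{s'}=m_{i_2}^{-1}m_{i_1}(v_s)$ and $u_{s'+1}=v_{s+1}$ are forced, and whether $s'+1$ is again an overlap (and with which $C_1$-position) is then determined by $C_1$ alone; hence overlap runs are determined by their anchors, and a large total overlap imposes many closed constraints on $C_1$ itself, collapsing the $n^{a_1-1}$ factor. Quantifying that collapse is exactly the hard part, and for $k=3$ your outline (``the sum telescopes'') gives no argument at all.

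For contrast, the paper avoids this enumeration entirely. It exposes $\pi$ by following the path through $v$ in each matching in turn, observing that whenever a cycle closes at a time at which the other cycles are not simultaneously closing, the conditional probability of that closure is at most $2/n$ (at least $n/2$ labels remain unused because $a_1+a_2+a_3\le n/2$); this yields $O(n^{-k})$ when the closing times are distinct. The degenerate cases, where two or three cycles close at the same step, force the cycles to overlap in at least about half of the vertices of the longest one, and this is handled separately: by direct enumeration when the total size is $O(1)$, and by a counting (``cherry'') argument in the graph $M_{i_1}\cup\cdots\cup M_{i_k}$, which has maximum degree at most $3$, showing the probability is $n^{-\omega(1)}$ otherwise. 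Either you need an analogue of those structural steps inside your counting, or you should switch to such a sequential-exposure argument; as written, the proposal leaves the essential estimate unestablished.
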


\begin{proof}
Fix $v \in [n]$. For simplicity, suppose that the collection of $k$ matchings is $M_1, \ldots, M_k$ and write $C_i$ and $c_i$ instead of $C_{v,i}$ and $c_{v,i}$. Throughout the proof we will only present the arguments for the case $k=3$. The corresponding claims for $k=2$ should follow similarly. Let $\cE(a_1,a_2,a_3)= (c_1=a_1)\cap (c_2=a_2) \cap (c_3=a_3)$ be the event that the cycle $c_i$ has length $a_i$ and let $\cE(a_1,a_2)=(c_1=a_1)\cap (c_2=a_2)$ the corresponding event for two cycles. Our goal is to prove that $\PP(\cE(a_1,a_2,a_3))=O(n^{-3})$ and $\PP(\cE(a_1,a_2))$.

 Define the random subset $T:=V\left(C_i\cup C_j\cup C_k\right)\setminus \{v\}$. By a slight abuse of notation, we also consider $T$ to be a subset of $X$. Since the matchings $M_1,M_2$, and $M_3$ are edge-disjoint, we observe that in the bipartite graph $G:=M_1\cup M_2\cup M_3$, the following properties hold:
 \begin{enumerate}
     \item $e_G\left(T,\pi^{-1}(T)\right)\geq c_1+c_2+c_3$, and
     \item the subgraph of $G$ induced by $(T\cup \{v\})\cup\pi^{-1}(T\cup\{v\})$ has no isolated vertices, and
     \item $\pi^{-1}(v)\in Y \textrm{ has three neighbors in } T \cup \{v\}.$
 \end{enumerate}

 These observations are sufficient to prove the following claim:

\begin{claim}\label{clm:cyclesbig}
    $\Prob(|T|=O(1))=O(n^{-3}).$
\end{claim} 

\begin{proof}
    Conditioning on the size of $T$, we distinguish between two cases: 
    \paragraph{\bf{Case 1.}} $v\pi^{-1}(v)\notin E(G)$. In this case, there are $n-3$ possible choices for $y\in Y$, where $y=\pi^{-1}(v)$. The probability that $\pi(y)=v$ is $\frac{1}{n}$. Let $N:=N_G(y)$ denote the set of the three neighbors of $y$ in $G$. Next, choose a subset $S\subseteq X \setminus \left(\{v\}\cup N\right)$ of size $|T|-3$, and define $T:= N\cup S$. 
    
    For each $x\in T$, select a non-empty subset of neighbors $S_x\subseteq N_G(x)$, and let $T':=\cup_{x\in T} S_x\subseteq Y$ be the obtained set. Clearly, there are at most $7^T$ ways to choose $T'$ of size $|T|$ such that, if $\pi(T'\cup \{y\})=T\cup\{v\}$, then its image under $D_{\pi}$ will give us three cycles containing $v$ on $T$. The probability that $\pi(T')=T$ is at most $\frac{T!}{(n)_{T}}$. For $|T|=O(1)$ this is of order $O\left(\left(\frac{1}{n}\right)^{T}\right)$.

Therefore, the probability that the edges $E(T\cup \{v\},\pi^{-1}(T)\cup\{y\})$ map to $T\cup \{v\}$ in $D_{\pi}$, forming three cycles containing $v$, is at most 
$$(n-3)\cdot \frac{1}{n}\cdot \binom{n}{|T|-3}\cdot 7^{|T|}\cdot O(n^{-|T|})=O(n^{-3})$$
as desired. 
\paragraph{\bf{Case 2.}}  $v\pi^{-1}(v)\in E(G)$. This case is quite similar to the previous one, with a few notable differences. Here, there are only $3$ possible choices for $y=\pi^{-1}(v)$, since  $y$ must be chosen as a neighbor of $v$. After selecting $y$ and including its two additional neighbors in $T$, we need to choose a subset $S\subseteq X$ of size $|T|-2$ (instead of $|T|-3$ as in the previous case) extra vertices to complete the set $T$.

Thus, the probability that the edges $E(T\cup \{v\},\pi^{-1}(T)\cup\{y\})$ are mapped to $T\cup \{v\}$ in $D_{\pi}$, forming three cycles containing $v$, is at most 
$$3\cdot \frac{1}{n}\cdot \binom{n}{|T|-2}\cdot 7^{|T|}\cdot O(n^{-|T|})=O(n^{-3})$$
as desired. 

To complete the proof of the claim take an upper bound over all possible values of $|T|$. Since $|T|=O(1)$, the result follows.
\end{proof}

By defining $T_2:=V(C_1\cup C_2)\setminus\{v\}$, a similar argument yields the following bound on the probability that the two shortest cycles have constant length.

\begin{claim}\label{clm:big2}
    $\Prob\left(|T_2|=O(1)\right)=O(n^{-2}).$
\end{claim}

The following claim demonstrates that it is highly unlikely for the three cycles containing $v$ to have significant overlap.

\begin{claim} \label{large intersection}
Let $1\leq a_1\leq a_2\leq a_3$ be such that $\omega(1)=a_1+a_2+a_3\leq n,$. Then, 
    $$\Prob\left((|T|\leq a_1+a_2+a_3/2)\cap \mathcal E(a_1,a_2,a_3) \right)=n^{-\omega(1)}.$$
\end{claim}

\begin{proof}
Suppose that $|T|\leq a_1+a_2+a_3/2$ and $\mathcal E(a_1,a_2,a_3)$ holds. In this case, we must have $e_G(T,\pi^{-1}(T))\geq a_1+a_2+a_3.$ Let $x$ be the number of vertices of degree greater than $1$ in the subgraph of $G$ induced by $T\cup \pi^{-1}(T)$. Now, using the fact that the maximum degree in $G$ is $3$, we obtain that: 
$$a_1+a_2+a_3\leq 3x+a_1+a_2+a_3/2-x,$$
which simplifies to 
$$x\geq a_3/4.$$
Next, note that in this scenario, it is possible to greedily find a subset of $a_3/20=\omega(1)$ vertex-disjoint cherries (pairs of vertices sharing a common neighbor), with the centers of the cherries forming a subset $S\subseteq \pi^{-1}(T)$. This implies that $|N_G(S)|\geq 2|S|$.

Now, consider the number of pairs $(T,T')$ of subsets $T\subseteq X, T'\subseteq Y$ with $|T|=|T'|\leq a_1+a_2+a_3/2$ and there are at least $a_3/20$ vertices in $T'$ whose neighborhood in $T$ is of size at least $a_3/10$. This number is at most 
$$\binom{n}{a_3/20}\binom{n}{|T|-a_3/10}\leq \left(\frac{Cn}{T}\right)^{|T|-a_3/20},$$
for some constant $C>0$. 

Given $T$, there are at most $7^{|T|}$ ways to choose $T'$ (recall that the minimum degree in the graph induced by $T\cup \pi^{-1}(T)$ is at least $1$). Now, for each such pair $(T,T')$, the probability that $\pi(T')=T$ is at most $\binom{n}{|T|}^{-1}\leq (|T|/n)^{|T|}$. Therefore, we obtain that 

$$ \Prob\left((|T|\leq a_1+a_2+a_3/2)\cap \mathcal E(a_1,a_2,a_3) \right)\leq \left(\frac{Cn}{T}\right)^{|T|-a_3/20}\cdot 7^{|T|}\cdot \left(\frac{|T|}{n}\right)^{|T|}=n^{-\omega(1)}. $$

This completes the proof.
\end{proof}

If we consider only the case with two cycles $C_1,$ and $C_2$, a similar proof will give us the following.

\begin{claim}\label{cl:large2} Let $1\leq a_1\leq a_2$ be such that $\omega(1)=a_1+a_2\leq n,$. Then, 
    $$\Prob\left((|T_2|\leq a_1+a_2/2)\cap \mathcal E(a_1,a_2)\right)=n^{-\omega(1)}.$$
\end{claim}

It will be helpful to describe the following exposure procedure for revealing the values of the (random) permutation $\pi: [n] \to [n]$. This procedure is designed to track and extend the directed path in the directed graph $D_{\pi}$ that contains $v$, and ultimately reveal the entire structure of the cycles $C_1$, and then $C_2$ and $C_3$. Given a set of labeled vertices $U\subseteq Y$ and a matching $M_i$, let $M_i[X\cup U]$ be the induced subgraph of $M_i$ on the vertex set $X\cup U$ (where we view $U$ as a subset of $Y$). We define $D[M_i,U]:=D_\pi(M[X\cup U])$. We proceed by exposing the labels of the vertices of $Y$ one step at the time as follows:

\begin{enumerate}
    \item Initialize by setting $U=\emptyset$ (the set of labeled vertices).
    \item For $1\leq i \leq 3$, do the following:
    \begin{enumerate}
        \item[(i)] Let $P$ be the directed component containing the vertex $v$ in $D[M_i,U]$.
        \item[(ii)] While $P$ is not a directed cycle, do the following step: Note that the component $P$ is either a directed path or an isolated vertex. Let $x$ be the sink of $P$ and let $y \in Y$ be the vertex adjacent to $x$ in $M_i$. Since $x$ is a sink, the value of $\pi(y)$ was never revealed. We reveal $\pi(y)$, update $U:=U\cup\{y\}$ and $P$, and repeat.
        \item[(iii)] If $P$ is a directed cycle, we move to the next matching ($i:=i+1$).
    \end{enumerate}
    \item Expose the remaining labels in $[n]\setminus U$ in arbitrary order.
\end{enumerate}

This procedure incrementally reveals the permutation $\pi$ by following the edges of the matchings $M_1,M_2$, and $M_3$
  that define the cycles $C_1,C_2,$ and $C_3$, repectively, in $D_{\pi}$. The key idea is to start at the vertex $v$, trace its unique path in $D_{\pi}(M_{s})$ for $s\in \{i,j,k\}$, and extend the corresponding directed path by uncovering the connections dictated by matchings. For each cycle, we reveal the labels of vertices one step at a time until the entire cycle structure is determined. Once all relevant cycles are exposed, the remaining vertices are labeled arbitrarily. 

Let us now introduce some useful notation. Denote by $t_1,t_2,$ and $t_3$, the closing times of the cycles $C_1,C_2,$ and $C_3,$ respectively, according to the exposure procedure described above. Notice that we always have $t_1=c_1\leq t_2,t_3$, although  it is possible that $t_3<t_2$. At the closing time of a cycle $C_s$, where $s\in [3]$, if we examine the (unique) directed path containing $v$ in the image of the matching $M_s$ in $D_{\pi}$, then the following hold: 
\begin{enumerate}
    \item the path is of length $c_s-1$, and
    \item there is exactly one vertex $w$ (the starting point of this path) that remains unlabeled at this stage; that is, $\pi^{-1}(w)$ is yet undefined. 
\end{enumerate}
 Note that if $z$ is the other endpoint of the path, and we consider $z\in X$, then in order to close the cycle, its unique neighbor in $M_s$, if yet unlabeled, must be labeled $w$. At this point, if there are still $m$ unlabeled vertices in $Y$, the conditional probability that $\pi(z)=w$ is $\frac{1}{m}$.

This observation can be slightly generalized to provide an upper bound on the probability that the three cycles $C_1,C_2$ and $C_3$ have distinct closing times.

\begin{claim}\label{clm:difclosing}
   Let $a_1\leq a_2\leq a_3$ be such that $\omega(1)=a_1+a_2+a_3\leq n/2$. Let $\mathcal A=(t_1 <t_2<t_3)$ be the event that the three cycles close sequentially at distinct times. Then,
   $$\Prob\left(\mathcal E(a_1,a_2,a_3)\cap \mathcal A\right)=O(n^{-3}).$$
\end{claim}

\begin{proof}
Let $\cE^{(3)}:=\cE(a_1,a_2,a_3)\cap \cA$, $\cE^{(2)}:=\cE(a_1,a_2)\cap (t_1<t_2)$ and $\cE^{(1)}:=(c_1=a_1)$. Fix an instance of $\cE^{(3)}$. For $1\leq i\leq 3$, let $U_i$ be the ordered set of labels revealed from steps $t_{i-1}+1$ to $t_i-1$ (where $t_0=0$). Note by the description of the procedure and the observation preceding the claim, that such sets $U_1\cup U_2\cup U_3$ uniquely determines the instance of $\cE^{(3)}$.

Let $P_3$ be the path containing $v$ using the edges of $M_3$ during the procedure. Note that at each step of the process after $t_{2}$, the length of $P_3$ increases in at least one edge. Thus, conditioned on fixed $U_1$ and $U_2$, all the events leading to the possible sequence of vertices $U_{3}$ until step $t_{3}-1$ are disjoint. Moreover, as discussed earlier, the probability of closing the cycles at step $t_3$ conditioned on $U_3$ is at most $2/n$ (since, by assumption, there are always at least $n/2$ unlabeled vertices at any given stage of the procedure). Therefore, 
\begin{align*}
    \PP(\cE^{(3)}\mid U_1,U_2)=\sum_{U_3}\PP(\cE^{(3)}\mid U_1,U_2,U_3)\PP(U_3)\leq\frac{2}{n}\sum_{U_3}\PP(U_3)=\frac{2}{n}.
\end{align*}
Consequently, we obtain that
\begin{align*}
    \PP(\cE^{(3)})=\sum_{U_1,U_2}\PP(\cE(a_1,a_2,a_3)\mid U_1,U_2)\PP(U_1, U_2)=\frac{2}{n}\sum_{U_1,U_2}\PP(U_1, U_2)\leq\frac{2}{n}\PP(\cE^{(2)}).
\end{align*}
A similar argument shows that
\begin{align*}
    \PP(\cE^{(2)})\leq \frac{2}{n}\PP(\cE^{(1)}).
\end{align*}
Since $\PP(\cE^{(1)})=\frac{1}{n}$, by putting together the two inequalities, we have that $\PP(\cE^{(3)})=O(n^{-3})$. This concludes the proof.
\end{proof}


    

The last piece of the puzzle that we need is the following claim, which basically combines all the above: 

\begin{claim}
    Let $1 \leq a_1\leq a_2 \leq a_3\leq n/6$. Then $$\Prob\left(\mathcal E(a_1,a_2,a_3)\right)=O(n^{-3}).$$
\end{claim}

\begin{proof}
    We compute the probability by splitting into several cases depending on the times that each cycle close. Indeed, note that since $t_1\leq t_2,t_3$, the event $\cE(a_1,a_2,a_3)$ can be written as the disjoint union of the events
    \begin{align*}
        \cE(a_1,a_2,a_3)=\cE_1\cup \cE_2 \cup \cE_3 
    \end{align*}
    where we define the events $\cE_i$ by
    \begin{align*}
        \cE_1&:=\cE(a_1,a_2,a_3)\cap (t_3\leq t_2)\\
        \cE_2&:=\cE(a_1,a_2,a_3)\cap  (t_1=t_2<t_3)\\
        \cE_3&:=\cE(a_1,a_2,a_3)\cap (t_1 < t_2 < t_3).
    \end{align*}
    We bound $\PP(\cE_i)$ for $1\leq i \leq 3$.
    \paragraph{\textbf{Case 1: $\cE_1$}} Let $S=V\left(C_1\cup C_2\cup C_3\right)$. Note that in this case $|S|\leq a_1+a_2<a_1+a_2+a_3/2$. Hence, by Claim \ref{clm:cyclesbig} and \ref{large intersection}, we obtain that
    \begin{align*}
        \PP(\cE_1)=\PP(\cE_1\cap (|S|=O(1)))+\PP(\cE_1\cap (|S|=\omega(1)))=O(n^{-3})+O(n^{-\omega(1)})=O(n^{-3}).
    \end{align*}

    \paragraph{\textbf{Case 2: $\cE_2$}} Let $S=V\left(C_1\cup C_2\right)$. Note that in this case $|S|=a_1\leq a_1+a_2/2$. Let $\tilde{\cE}_2$ be the event $\cE(a_1,a_2)\cap (t_1=t_2)$. Thus, by Claim \ref{clm:big2} and \ref{cl:large2}, we obtain that
    \begin{align*}
        \PP(\tilde{\cE_2})=\PP(\tilde{\cE_2}\cap (|S|=O(1)))+\PP(\tilde{\cE_2}\cap (|S|=\omega(1)))=O(n^{-2}).
    \end{align*}
    Let $U$ be the ordered set of labels revealed from steps $t_2$ to step $t_3-1$ conditioned on a instance $\omega \in \tilde{\cE_2}$. By a similar argument as in Claim \ref{clm:difclosing}, we obtain that
    \begin{align*}
        \PP(\cE_2\mid \omega)=\sum_{U}\PP(\cE_2\mid \omega, U)\PP(U)\leq \frac{2}{n}. 
    \end{align*}
    Therefore,
    \begin{align*}
        \PP(\cE_2)=\sum_{\omega \in \tilde{\cE_2}}\PP(\cE_2\mid \omega)\PP(\omega)\leq \frac{2}{n}\PP(\tilde{\cE_2})=O(n^{-3}).
    \end{align*}

    \paragraph{\textbf{Case 3: $\cE_3$}}
    We obtain that $\PP(\cE_3)=O(n^{-3})$ immediately from Claim \ref{clm:difclosing}.

    By putting all the cases together, we obtain that 
    \begin{align*}
    \PP(\cE(a_1,a_2,a_3))=O(n^{-3})
    \end{align*}
    as desired.
\end{proof}

We note that a similar bound can be obtained for two cycles. More precisely, one can prove that $\PP(\cE(a_1,a_2))=O(n^{-2})$. This finishes the proof of the lemma.
\end{proof}

We are now able to prove Proposition \ref{prop:ugly}.

\begin{proof}[Proof of Propositon \ref{prop:ugly}]
Let $v \in [n]$. For simplicity, we will write $C_i$ and $c_i$ instead of $C_{v,i}$ and $c_{v,i}$, respectively. We begin with an auxiliary result.

\begin{claim}\label{clm:expect}
The following three bound holds:
\begin{enumerate}
		\item For every $i \in [r]$, 
		$\mathbb E\left(\frac{1}{c^3_{i}} \right) = O\left(\frac{1}{n} \right)$.
		\item For every distinct $i,j \in [r]$,
		$\mathbb E \left(\frac{1}{c^2_{i} c_{j}} \right) = O\left(\frac{\log^2 n}{n^2} \right).$
		
		\item For every distinct $i,j,k \in [r]$,
		$\mathbb E \left(\frac{1}{c_{i} c_{j} c_{k}} \right) = O \left(\frac{\log^3 n}{n^3} \right)$.
\end{enumerate}
\end{claim}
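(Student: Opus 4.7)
The strategy rests on two facts. First, since $\pi$ is uniformly random on $S_n$ and the matching $M_i$ induces a fixed permutation $m_i$ (with $m_i(x)=y$ whenever $xy\in M_i$), the composition $\pi\circ m_i$ is a uniformly random permutation, and its cycle structure coincides with that of $D_\pi(M_i)$. The classical fact that the cycle through a fixed point of a uniform permutation on $[n]$ has length uniform on $[n]$ then yields $\Prob(c_i=a)=1/n$ for every $a\in[n]$. Second, Lemma~\ref{lem:highmoments} supplies the joint bound $\Prob(c_{i_1}=a_1,\ldots,c_{i_k}=a_k)=O(n^{-k})$ for $k\in\{2,3\}$ and $1\le a_1\le\cdots\le a_k\le n/6$. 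From the uniform marginal I also record the auxiliary estimates $\Exp(1/c_i)=H_n/n=O(\log n/n)$ and $\Exp(1/c_i^2)=O(1/n)$, which will be reused below.

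Bound~(1) is then immediate, since
\[\Exp(1/c_i^3)=\frac{1}{n}\sum_{a=1}^{n}\frac{1}{a^3}=O(1/n),\]
because $\sum_{a\ge 1}1/a^3<\infty$.

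For bound~(2) I would decompose the sum according to whether $\max(c_i,c_j)\le n/6$. On the ``small'' region Lemma~\ref{lem:highmoments} gives
\[\sum_{a,b\le n/6}\frac{\Prob(c_i=a,c_j=b)}{a^2 b}\le O(n^{-2})\Big(\sum_{a\ge 1}\frac{1}{a^2}\Big)\Big(\sum_{b=1}^{n/6}\frac{1}{b}\Big)=O(\log n/n^{2}).\]
On the complement, either $c_i>n/6$, in which case $1/c_i^2\le 36/n^2$ contributes at most $\frac{36}{n^2}\,\Exp(1/c_j)=O(\log n/n^3)$, or $c_j>n/6$, in which case $1/c_j\le 6/n$ contributes at most $\frac{6}{n}\,\Exp(1/c_i^2)=O(1/n^{2})$. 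Summing, the total is $O(\log n/n^{2})\subseteq O(\log^2 n/n^2)$.

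For bound~(3) the same pattern applies. On the region $\{c_i,c_j,c_k\le n/6\}$, Lemma~\ref{lem:highmoments} combined with three harmonic sums yields $O(n^{-3})\cdot O(\log^3 n)=O(\log^3 n/n^3)$. Outside this region some cycle length, say $c_i$, exceeds $n/6$, so I use $1/c_i\le 6/n$ to reduce to $\frac{6}{n}\,\Exp(1/(c_jc_k))$; the inner expectation is $O(\log^2 n/n^2)$ by precisely the small/large decomposition used in bound~(2) but applied to $1/(c_jc_k)$ instead of $1/(c_i^2 c_j)$, giving $O(\log^2 n/n^3)$ for this region. The main obstacle is purely organizational: assigning boundary cases to exactly one region and invoking the correct lower-order expectation each time; no probabilistic input is needed beyond Lemma~\ref{lem:highmoments} and elementary harmonic estimates.
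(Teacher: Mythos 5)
Your argument is correct and follows essentially the same route as the paper: use the exact marginal $\Prob(c_i=a)=1/n$, invoke Lemma~\ref{lem:highmoments} for the joint probabilities when all relevant cycle lengths are at most a constant fraction of $n$, and handle the remaining ranges by bounding $1/c\le O(1/n)$ for a long cycle and reducing to lower-order expectations, with harmonic sums supplying the logarithmic factors. The only differences are cosmetic (you split by which cycle exceeds $n/6$ rather than ordering $a_i\le a_j\le a_k$, and by retaining the full $1/a_i^2$ factor you even get the slightly sharper bound $O(\log n/n^2)$ in part (2)), so no changes are needed.
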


\begin{proof}
We start by showing statement (1). Note that for a single cycle, the probability $\PP(c_i=t)=1/n$. Hence,
\begin{align*}
    \EE\left(\frac{1}{c_i^3}\right)=\sum_{a_i\in [n]}\frac{1}{a_i^3}\PP(c_i=a_i)=\frac{1}{n}\sum_{a_i\in [n]}\frac{1}{a_i^3}=O\left(\frac{1}{n}\right).
\end{align*}

To prove statement (2) and (3) we use Lemma \ref{lem:highmoments}. Indeed, for statement (2) we have that
\begin{align*}
    \EE\left(\frac{1}{c_i^2c_j}\right)&=\sum_{a_i,a_j \in [n]}\frac{1}{a_i^2a_j}\PP\left((c_i=a_i)\cap(c_j=a_j)\right)\\ &\leq \sum_{a_i,a_j \in [n]}\frac{1}{a_ia_j}\PP\left((c_i=a_i)\cap(c_j=a_j)\right)\leq \sum_{a_i\leq a_j}\frac{2}{a_ia_j}\PP\left((c_i=a_i)\cap(c_j=a_j)\right)\\
    &\leq\sum_{a_i\leq a_j\leq n/4}\frac{2}{a_ia_j}\PP\left((c_i=a_i)\cap(c_j=a_j)\right)+\sum_{a_i\in [n]}\frac{8}{na_i}\PP\left(c_i=a_i\right)\\
    &=O(n^{-2})\left(\sum_{a_i\leq a_j\leq n/4}\frac{1}{a_ia_j}+\sum_{a_i\in [n]}\frac{1}{a_i}\right)=O\left(\frac{\log^2 n}{n^2}\right),
\end{align*}
where we use Lemma \ref{lem:highmoments} for two cycles.

A similar computation holds for statement (3):
\begin{align*}
    \EE\left(\frac{1}{c_ic_jc_k}\right)&=\sum_{a_i,a_j,a_k \in [n]}\frac{1}{a_ia_ja_k}\PP\left((c_i=a_i)\cap(c_j=a_j)\cap(c_k=a_k)\right)\\
    &\leq \sum_{a_i\leq a_j\leq a_k}\frac{6}{a_ia_ja_k}\PP\left((c_i=a_i)\cap(c_j=a_j)\cap(c_k=a_k)\right)
    =\Sigma_1+\Sigma_2+\Sigma_3,
\end{align*}
where
\begin{align*}
    &\Sigma_1:=\sum_{a_i\leq a_j\leq a_k\leq n/6}\frac{6}{a_ia_ja_k}\PP\left((c_i=a_i)\cap(c_j=a_j)\cap(c_k=a_k)\right)\\
    &\Sigma_2:= \sum_{\substack{a_i\leq a_j\leq n/6\\ a_k>n/6}}\frac{6}{a_ia_ja_k}\PP\left((c_i=a_i)\cap(c_j=a_j)\cap(c_k=a_k)\right), \textrm{ and }\\
    &\Sigma_3:=\sum_{\substack{a_i\in [n]\\ n/6<a_j\leq a_k}}\frac{6}{a_ia_ja_k}\PP\left((c_i=a_i)\cap(c_j=a_j)\cap(c_k=a_k)\right)
\end{align*}

We bound each of the $\Sigma_i$s separately as follows: 

By Lemma \ref{lem:highmoments} applied for three cycles, we know that for $a_1\leq a_2\leq a_3\leq n/6$, the probability $\PP((c_i=a_i)\cap (c_j=a_j)\cap (c_k=a_k))= O(n^{-3})$. Therefore,
\begin{align*}
    \Sigma_1=O\left(n^{-3}\sum_{a_1\leq a_2\leq a_3\leq n/6}\frac{1}{a_1a_2a_3}\right)=O\left(\frac{\log^3 n}{n^3}\right)
\end{align*}
Similarly, Lemma \ref{lem:highmoments} applied for two cycles gives that if $a_1\leq a_2\leq n/6$, then $\PP((c_i=a_i)\cap(c_j=a_j))=O(n^{-2})$. Hence
\begin{align*}
    \Sigma_2\leq \sum_{a_i\leq a_j\leq n/6}\frac{36}{na_ia_j}\PP\left((c_i=a_i)\cap(c_j=a_j)\right)= O\left(n^{-3}\sum_{a_i\leq a_j\leq n/6}\frac{1}{a_ia_j}\right)=O\left(\frac{\log^2 n}{n^3}\right)
\end{align*}
Finally, to bound $\Sigma_3$, we just need to use that $\PP(c_i=a_i)=1/n$. Therefore,
\begin{align*}
\Sigma_3 \leq \sum_{a_i\in [n]}\frac{216}{n^2a_i}\PP\left(c_i=a_i\right)=\frac{216}{n^3}\sum_{a_i\in [n]}\frac{1}{a_i}=O\left(\frac{\log n}{n^3}\right)
\end{align*}
By summing everything together, we obtain 
\begin{align*}
    \EE\left(\frac{1}{c_ic_jc_k}\right)=O\left(\frac{\log^3 n}{n^3}\right).
\end{align*}
This concludes the proof of the claim.
\end{proof}

We finish the proof of the proposition by applying Claim \ref{clm:expect} after expanding the third moment
\begin{align*}
	\mathbb E \left(\left(\sum_{i \in [r]}\frac{1}{c_{i}} \right)^3 \right) &= \sum_{i \in [r]} \mathbb E\left(\frac{1}{c_{i}^3} \right) + 3\sum_{\substack{i, j \in [r] \\ i \neq j}} \mathbb E \left(\frac{1}{c_{i}^2 c_{j}} \right) + 6\sum_{\substack{i,j,k \in [r] \\ i<j<k}} \mathbb E \left(\frac{1}{c_{i} c_{j} c_{k}} \right)\\
  &=r \cdot O\left(\frac{1}{n} \right) + r^2 \cdot O\left(\frac{\log^2 n}{n^2} \right) + r^3 \cdot O\left(\frac{\log^3 n}{n^3} \right) = O\left(\frac{r}{n} \log^3 n \right),
 \end{align*}
 as desired.
\end{proof}

\end{document}